\DeclareMathOperator{\Hom}{Hom}
\DeclareMathOperator{\cha}{char}
\DeclareMathOperator{\End}{End}
\DeclareMathOperator{\Spec}{Spec}
\DeclareMathOperator{\Mat}{Mat}
\DeclareMathOperator{\HH}{H}
\DeclareMathOperator{\Gl}{Gl}
\DeclareMathOperator{\ev}{ev}
\DeclareMathOperator{\id}{id}
\DeclareMathOperator{\pr}{pr}
\DeclareMathOperator{\sh}{sh}
\DeclareMathOperator{\CC}{\mathbb{C}}
\DeclareMathOperator{\QQ}{\mathbb{Q}}
\DeclareMathOperator{\ZZ}{\mathbb{Z}}
\newcommand{\tG}[1]{\mathcal{G}(\mathcal{L}^{#1})}
\newcommand{\thetacha}[2]{\vartheta \hspace{-0.5mm}\big[\hspace{-0.5mm}\begin{smallmatrix}
#1\\#2
\end{smallmatrix}\hspace{-0.5mm} \big]}
\theoremstyle{definition}
\newtheorem{definition}{Definition}[section]
\newtheorem{exmpl}[definition]{Example}
\newtheorem{rem}[definition]{Remark}
\theoremstyle{theorem}
\newtheorem{thm}[definition]{Theorem}
\newtheorem{cor}[definition]{Corollary}
\newtheorem{lem}[definition]{Lemma}
\newtheorem{prop}[definition]{Proposition}
\begin{document}
\title{Theta Nullvalues of Supersingular Abelian varieties}
\begin{abstract}
Let $\eta$ be a polarization with connected kernel on a superspecial abelian variety $E^g$. We give a sufficient criterion which allows the computation of the theta nullvalues of any quotient of $E^g$ by a maximal isotropic subgroup scheme of $\ker(\eta)$ effectively.
\par This criterion is satisfied in many situations studied by Li and Oort \cite{Li-Oort}. We used our method to implement an algorithm that computes supersingular curves of genus 3.
\end{abstract}
\author[Pieper]{Andreas Pieper}
\address{
  Andreas Pieper,
  Institut für Algebra und Zahlentheorie,
  Universität Ulm,
  Helmholtzstrasse 18,
  89081 Ulm,
  Germany
}
\email{andreas.pieper@uni-ulm.de}

\maketitle
\tableofcontents
\section{Introduction}
Recall that an Abelian variety of dimension $g$ is called \emph{supersingular} if it becomes isogenous to a product of supersingular elliptic curves after an extension of the ground field. For $g=1$ there are only finitely many isomorphism classes of supersingular Abelian varieties over a given algebraically closed field. Then for $g>1$ a new phenomenon arises: There are non-constant families of supersingular Abelian varieties over a positive dimensional base.
\par The first investigation of this phenomenon was \cite{Moret-Bailly}. Moret-Bailly constructed families $\mathcal{A}\rightarrow \mathbb{P}^1$ of principally polarized supersingular Abelian surfaces. Later Oort proved that every principally polarized supersingular Abelian surface is the fiber of one such family. This proves that the locus of supersingular points in the Siegel moduli space is $1$-dimensional.
\par Together with Li, Oort \cite{Li-Oort} generalized these results to arbitrary $g$: They constructed families $\mathcal{A}\rightarrow S$ of $g$-dimensional supersingular p.p.a.v.s over a $\lfloor \frac{g^2}{4} \rfloor$-dimensional base $S$. They proved that every supersingular p.p.a.v. is the fiber of such a family. Furthermore, the induced map from $S$ to the Siegel moduli space is generically finite. Therefore the locus of supersingular points in the Siegel moduli space is $\lfloor \frac{g^2}{4} \rfloor$-dimensional.
\par The goal of this paper is to make these families explicit. Let us fix once and for all a supersingular elliptic curve $E$ defined over $\mathbb{F}_p$ with $p>2$ such that its Frobenius satisfies $F^2+p=0$. The construction of Li and Oort starts with a polarization $\eta$ on $E^g$ such that $\ker(\eta)=E^g[F^{g-1}]$. The base scheme $S$ is a parameter space for certain maximal isotropic subgroup schemes $\mathcal{H}\subseteq \ker(\eta)$ described via Dieudonn\'e theory. The fiber above a point of $S$ will be $E^g/\mathcal{H}$ equipped with the polarization descended from $\eta$.
\par This motivates the following setup: We can allow ourselves to be slightly more general and will assume that $\eta$ is a polarization on $E^g$ such that $\ker(\eta)$ is any connected group scheme. Furthermore $\mathcal{H}\subseteq \ker(\eta)$ shall be a maximal isotropic subgroup scheme. Denote by $A$ the quotient $E^g/\mathcal{H}$ and by $\mathcal{L}$ a symmetric line bundle on $A$ that induces the polarization descended from $\eta$. In this article we will give a method that calculates the algebraic theta nullvalues of $(A, \mathcal{L}^4)$. The method generalizes to $(A,\mathcal{L}^N)$ for $N$ arbitrary but we will confine ourselves to $N=4$ to simplify the exposition. The restriction to $N=4$ is interesting enough because the theta nullvalues of $(A, \mathcal{L}^4)$ uniquely determine a principally polarized abelian variety (a consequence of \cite[\S 4, Theorem 2]{EqDefAV}).
\par In characteristic $0$, theta nullvalues are computed analytically by evaluating Riemann theta functions. But, this is not available for us because we are in characteristic $p>0$ and our setup will lift very rarely. Instead, we devise a purely algebraic machinery to calculate theta nullvalues.
\par This is based on Mumford's theory of theta groups. To the Abelian variety $A$ and the line bundle $\mathcal{L}^4$ one can associate a group $\tG{4}$ that naturally acts on $\HH^0(A, \mathcal{L}^4)$. The group action gives a natural basis of $\HH^0(A, \mathcal{L}^4)$ after choosing a suitable level structure on $A$, called $\Gamma(4,8)$-level structure. The algebraic theta nullvalues are defined to be the evaluation at $0$ of this natural basis. Via the  bijection between line bundles and divisors modulo rational equivalence the action of the theta group has an explicit description in terms of rational functions on $A$. However, we do not know $A$ yet. Therefore we will instead work with the pullback of these rational functions along $E^g\rightarrow E^g/\mathcal{H}$.
\par Indeed, let $\Theta \subset A$ be the divisor of vanishing of the unique (up to a scalar) non-zero section of $\mathcal{L}$. We will chose a fixed divisor $D\subset E^g$ (see below for the precise definition) in the rational equivalence class of $\pi^{-1}(\Theta)$. The first rational function on $E^g$ that we need to figure out is $\rho_{\mathcal{H}}$ such that $(\rho_{\mathcal{H}})=\pi^{-1}(\Theta)-D$. In the case of Moret-Bailly families this kind of problem was solved in \cite[Theorem 4.18]{AP}. The main idea of the quoted paper was to exploit the fibers of the family that are as simple as possible. These are a product of elliptic curves, equipped with the product polarization.
\par This idea will also play a prominent role in the present paper. We define a \emph{completely decomposed $\Theta_{\eta}$-divisor} to be a divisor inducing $\eta$ that is the preimage of the natural theta divisor along an isogeny $E^g\rightarrow E_1 \times \ldots \times E_g$. The divisor $D$ fixed above will be a suitable translate of one completely decomposed $\Theta_{\eta}$-divisor.
\par In \cite{AP} the author uses a second completely decomposed $\Theta_{\eta}$-divisor to construct an isomorphism of $\mathcal{G}(\mathcal{O}(D))$ with a Heisenberg group. In our general setting $\mathcal{G}(\mathcal{O}(D))$ will usually not be a Heisenberg group. Nevertheless, it will turn out to be enough for the present method to have level groups in $\mathcal{G}(\mathcal{O}(D))$ that generate $\ker(\eta)$ in a certain sense made precise in the main body of the article. This leads to a technical restriction on $\eta$ as we will assume that there are sufficiently many completely decomposed $\Theta_{\eta}$-divisors. Under this assumption we can give a method for calculating $\rho_{\mathcal{H}}$ which is explained in Section \ref{SecInvCon}.
\par There is a second kind of rational functions that we need for computing the theta nullvalues: To make sense of theta nullvalues one needs a theta structure on $\tG{4}$ and thus two maximal level groups in $\tG{4}$. To construct these we will define two level groups in $\mathcal{G}(\mathcal{O}(4D))$ that will descend to the desired maximal level groups in $\tG{4}$. This will be explained in section \ref{SecEtaleLevGr}.
\par The paper is organized as follows: Chapter \ref{ChapThetaGr} recalls Mumford's theory of theta groups. Contrary to \cite{EqDefAV} we do not assume that the ground field $k$ is algebraically closed. Instead, we will always indicate the condition on $k$ such that everything works without a field extension. This convention is used throughout the main body of the paper. The reader may however safely assume that $k$ is algebraically closed if he or she prefers that. For sake of simplicity we shall assume that $k$ is algebraically closed for the rest of the introduction.
\par The most important definition in Chapter \ref{ChapThetaGr} is a so-called $\Gamma(4,8)$-level structure on a principally polarized abelian variety. The terminology stems from the fact that a $\Gamma(4,8)$-level induces a full level $4$ structure and is induced by a full level $8$ structure. Next we will explain the compatibility with the complex analytic theory of Riemann theta functions. Notice that over $\mathbb{C}$ the moduli space of p.p.a.v. with a $\Gamma(4,8)$-level is isomorphic to $\mathfrak{H}_g/\Gamma(4,8)$ where $\mathfrak{H}_g$ denotes the Siegel upper half space and
$$\Gamma(4,8)=\left\lbrace X=\begin{pmatrix}
\alpha & \beta\\
\gamma & \delta
\end{pmatrix}\in \text{Sp}_{2g}(\mathbb{Z}) | X\equiv I_{2g} \mod 4,\, \text{diag}(\alpha^t \beta)\equiv\text{diag}(\gamma^t \delta)\equiv 0 \mod 8  \right\rbrace$$
It is a fact that Riemann theta nullvalues with half-integer characteristics define Siegel modular forms of weight $1/2$ and level $\Gamma(4,8)$. The theta nullvalues of $(A, \mathcal{L}^4)$ are equal to these analytic Riemann theta nullvalues up to a linear transformation.
\par In chapter \ref{Chapfcgrschem} we discuss finite commutative group schemes, their representation theory, and Dieudonn\'e theory. We use Dieudonn\'e theory to reduce all calculations with group schemes to truncated Witt group schemes. Their explicit Hopf algebra structure and the explicit description of endomorphisms will be a useful tool.
\par In chapter \ref{ChapcdTheta} we introduce completely decomposed $\Theta_\eta$-divisors and their basic properties. Then we are ready to present the main method in chapter \ref{ChapMain}. In the last chapter we give some examples: We implemented an algorithm that computes the generic fiber of Moret-Bailly families. The speed of the computation was satisfying but can be shown to be not optimal for this particular problem.
\par On the other hand we implemented the method for $g=3$. We used the algorithm to compute equations for some genus $3$ curves with supersingular Jacobian. All the source code related to this paper is available at \url{https://github.com/Andreas-Pieper/SuperTheta.git}\\
\textbf{Acknowledgements:} This article is part of the author's PhD thesis under the supervision of Prof. Wewers. I thank him for his patience and his support. Furthermore, I thank J. Sijsling for providing the source code \cite{PlaneCM} for our implementation. The author is indebted to the anonymous referee for numerous suggestions and corrections.
\section{Theta Groups}\label{ChapThetaGr}
In this chapter we recall Mumford's theory of theta groups. The theory is applied twice in this paper: First we use it to study the isogeny $E^g\rightarrow E^g/\mathcal{H}$. Secondly it is needed for the definition of algebraic theta nullvalues and in particular also for their computation. The fact that $E^g\rightarrow E^g/\mathcal{H}$ is purely inseparable forces us to use the general (non)-separable theory and thus we need to define the theta group as a group scheme. Everything in this chapter is well-known and mainly due to Mumford.
\begin{definition}
Let $A/k$ be an abelian variety and $\mathcal{L}$ a line bundle. Then we define the \emph{theta group of $\mathcal{L}$} via the functor of points
\begin{equation*}
\begin{aligned}
\mathcal{G}(\mathcal{L}): \{\text{Sch} /k\}\longrightarrow \{\text{Groups}\}\\
T\mapsto \Big\lbrace(x,\phi)| x\in A(T)\textrm{ and } & \phi: \mathcal{L}_T \stackrel{\sim}{\longrightarrow} t_x^* \mathcal{L}_T\\
& \text{ an isomorphism of line bundles on } A\times T \Big\rbrace
\end{aligned}
\end{equation*}
The group structure is defined by $(x_1,\phi_1)\cdot (x_2,\phi_2)=(x_1+x_2,t_{x_2}^*\phi_1\circ \phi_2)$.
\end{definition}
The functor $\mathcal{G}(\mathcal{L})$ is represented by a group scheme over $k$ \cite[Lemma 8.2]{vdGMooAV}.
\begin{definition} Let $A$ be an abelian variety and $\mathcal{L}$ an ample line bundle. For an integer $n\in \mathbb{N}$ with $\cha(k) \nmid n$ we define the homomorphism
$$\varepsilon_n: \mathcal{G}(\mathcal{L})\rightarrow \mathcal{G}(\mathcal{L}^n)$$
as follows: $\varepsilon_n(x, \phi)=(x, \phi^n)$ where $\phi^n$ denotes the composition
$$\mathcal{L}^n\stackrel{\phi^{\otimes n}}{\longrightarrow} (t_x^*\mathcal{L})^{\otimes n}\stackrel{\sim}{\rightarrow} t_x^* \mathcal{L}^{n}$$
\end{definition}
We denote by $K(\mathcal{L})$ the image of $\mathcal{G}(\mathcal{L})\rightarrow A,\, (x, \phi)\mapsto x$. The relationship between $K(\mathcal{L})$ and $K(\mathcal{L}^n)$ is as follows:
\begin{lem}\label{KMultn}
$K(\mathcal{L}^n)=[n]^{-1}(K(\mathcal{L}))$
\end{lem}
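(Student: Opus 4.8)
The plan is to identify $K(\mathcal{L})$ with the scheme-theoretic kernel of the polarisation homomorphism and then to see how this homomorphism transforms when $\mathcal{L}$ is replaced by $\mathcal{L}^n$. Write $\phi_{\mathcal{L}}\colon A\to \hat A=\Pic^0(A)$ for the homomorphism $x\mapsto t_x^*\mathcal{L}\otimes\mathcal{L}^{-1}$. The first step is to recall the standard fact that the (fppf, equivalently scheme-theoretic) image of the projection $\mathcal{G}(\mathcal{L})\to A$, $(x,\phi)\mapsto x$, equals $\ker(\phi_{\mathcal{L}})$, so that $K(\mathcal{L})=\ker(\phi_{\mathcal{L}})$ and likewise $K(\mathcal{L}^n)=\ker(\phi_{\mathcal{L}^n})$; see \cite[\S 8]{vdGMooAV}.

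The second step is to establish the identity $\phi_{\mathcal{L}^n}=\phi_{\mathcal{L}}\circ[n]$ of homomorphisms $A\to\hat A$. By the theorem of the cube $\phi$ is additive in the line bundle, i.e. $\phi_{\mathcal{L}\otimes\mathcal{M}}=\phi_{\mathcal{L}}+\phi_{\mathcal{M}}$, whence $\phi_{\mathcal{L}^n}=n\cdot\phi_{\mathcal{L}}=[n]_{\hat A}\circ\phi_{\mathcal{L}}$; and since $\phi_{\mathcal{L}}$ is a homomorphism of group schemes it commutes with multiplication by $n$, so $[n]_{\hat A}\circ\phi_{\mathcal{L}}=\phi_{\mathcal{L}}\circ[n]_A$. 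Taking scheme-theoretic kernels and using that $\ker(f\circ g)=g^{-1}(\ker f)$ — both sides being the same iterated fibre product — I would conclude
$$K(\mathcal{L}^n)=\ker(\phi_{\mathcal{L}^n})=\ker(\phi_{\mathcal{L}}\circ[n])=[n]^{-1}(\ker\phi_{\mathcal{L}})=[n]^{-1}(K(\mathcal{L})).$$

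I expect the only genuinely delicate point to be the identification $K(\mathcal{L})=\ker(\phi_{\mathcal{L}})$ at the level of group schemes: one must be slightly careful about the rigidification conventions for the Picard functor, in particular because the paper allows $k$ not to be algebraically closed, so that "image of $\mathcal{G}(\mathcal{L})\to A$" has to be read as the fppf image rather than the naive image on $T$-points. This is classical, however, and I would simply cite it. Everything else is formal manipulation with kernels of morphisms of group schemes, and — in contrast to the definition of $\varepsilon_n$ — it requires no hypothesis relating $n$ to $\cha(k)$; when $\cha(k)\nmid n$ the inclusion $K(\mathcal{L})\subseteq K(\mathcal{L}^n)$ is moreover visible directly from $\varepsilon_n$, which covers the identity on the $A$-component, providing a sanity check.
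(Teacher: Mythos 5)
Your argument is correct and is essentially the standard one: the paper itself only cites Mumford (\emph{Eq.\ Def.\ AV}, p.~310), and the proof there rests on exactly the identities $K(\mathcal{L})=\ker(\phi_{\mathcal{L}})$ and $\phi_{\mathcal{L}^n}=n\phi_{\mathcal{L}}=\phi_{\mathcal{L}}\circ[n]$ that you use. (Minor quibble: additivity $\phi_{\mathcal{L}\otimes\mathcal{M}}=\phi_{\mathcal{L}}+\phi_{\mathcal{M}}$ is immediate from the definition; the theorem of the square is what makes each $\phi_{\mathcal{L}}$ a homomorphism.)
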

\begin{proof}
See \cite[p. 310]{EqDefAV}.
\end{proof}
\subsection{Symmetric line bundles}
Recall that a line bundle $\mathcal{L}$ on an abelian variety is called symmetric if $[-1]^*\mathcal{L}\cong\mathcal{L}$. A normalized isomorphism of $\mathcal{L}$ and $[-1]^*\mathcal{L}$ is an isomorphism $\phi:\mathcal{L} \stackrel{\sim}{\rightarrow} [-1]^* \mathcal{L}$ such that $\phi_{|0}: \mathcal{L}_{|0}\rightarrow [-1]^* \mathcal{L}_{|0}=\mathcal{L}_{|0} $ is the identity. There is a unique normalized isomorphism of $\mathcal{L}$ and $[-1]^*\mathcal{L}$.
\par A symmetric line bundle defines a quadratic function on the 2-torsion as follows:
\begin{definition} Let $A$ be an abelian variety and $\mathcal{L}$  a symmetric line bundle. Denote by $\phi: \mathcal{L}\stackrel{\sim}{\rightarrow} [-1]^* \mathcal{L}$ the normalized isomorphism. Define
$$e_{*}^\mathcal{L}: A[2](\overline{k})\rightarrow \overline{k}^\times$$
as follows: For a scheme valued point $x\in A[2](\overline{k})$ we define $e_{*}^\mathcal{L}(x)=\alpha$ where $\alpha\in \overline{k}^\times$ is the scalar such that
$$\phi_{|x}:\mathcal{L}_{|x}\rightarrow ([-1]^*\mathcal{L})_{|x}=\mathcal{L}_{|-x}=\mathcal{L}_{|x}$$
is multiplication by $\alpha$.
\end{definition}
This function $e_*$ as the following properties:
\begin{prop}\label{QFProp} \begin{itemize}
\item[i)] $e_{*}^\mathcal{L}$ has values in $\pm 1$
\item[ii)] $e_{*}^\mathcal{L}$ is a quadratic function with associated bilinear form $e^{\mathcal{L}^2}$, i.e. for all $x,y\in A[2](\overline{k})$ one has
$$e_*^\mathcal{L}(x+y)=e_*^\mathcal{L}(x)\cdot e_*^\mathcal{L}(y)\cdot e^{\mathcal{L}^2}(x,y)$$
\item[iii)] $e_{*}^\mathcal{L}$ is functorial: If $\pi:A\rightarrow B$ is a homomorphism and $\mathcal{L}$ is a symmetric line bundle on $B$ then for all $x\in A[2](\overline{k})$:
$$e_*^{\pi^*\mathcal{L}}(x)=e_*^\mathcal{L}(\pi(x))$$
\item[iv)] If $D$ is a symmetric divisor with $\mathcal{L}\cong \mathcal{O}(D)$. Then for all $x\in A[2](\overline{k})$ one has
$$e_*^\mathcal{L}(x)=(-1)^{m(x)-m(0)}$$
where for $y\in A$ we denote by $m(y)$ the multiplicity $D$ at $y$, i.e. $m(y)=0$ if $y\notin D$ otherwise $m(y)$ is the multiplicity of the local ring $\mathcal{O}_{D,y}$.
\end{itemize}
\end{prop}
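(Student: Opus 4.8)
Everything will be read off from the normalized isomorphism $\phi\colon\mathcal{L}\xrightarrow{\sim}[-1]^*\mathcal{L}$. For (i) the composite $[-1]^*\phi\circ\phi\colon\mathcal{L}\to[-1]^*[-1]^*\mathcal{L}=\mathcal{L}$, with the last identification the canonical one coming from $[-1]\circ[-1]=\id_A$, is an automorphism of $\mathcal{L}$ and hence a global scalar; it restricts to the identity over $0$ because $\phi$ and therefore $[-1]^*\phi$ do, so $[-1]^*\phi\circ\phi=\id_\mathcal{L}$. Restricting over $x\in A[2]$ and using $-x=x$, so that $([-1]^*\phi)_{|x}$ is again multiplication by $e_*^{\mathcal{L}}(x)$ under the identification $([-1]^*\mathcal{L})_{|x}=\mathcal{L}_{|x}$, this reads $e_*^{\mathcal{L}}(x)^2=1$. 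For (iii): since $\pi$ is a homomorphism, $[-1]_B\circ\pi=\pi\circ[-1]_A$, so $\pi^*\phi\colon\pi^*\mathcal{L}\xrightarrow{\sim}[-1]_A^*\pi^*\mathcal{L}$ makes $\pi^*\mathcal{L}$ symmetric, and $(\pi^*\phi)_{|0}=\phi_{|\pi(0)}=\id$ shows it is the normalized isomorphism of $\pi^*\mathcal{L}$; restricting over $x\in A[2]$ gives $(\pi^*\phi)_{|x}=\phi_{|\pi(x)}$, that is, $e_*^{\pi^*\mathcal{L}}(x)=e_*^{\mathcal{L}}(\pi(x))$.

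For (iv) I would argue locally at each $x\in A[2]$. Pass to $\widehat{\mathcal{O}}_{A,x}$ with coordinates $t=(t_1,\dots,t_g)$; since $[-1]$ is a homomorphism and $-x=x$ one has $[-1]^*t_i\equiv-t_i\pmod{\mathfrak{m}_x^2}$. Pick a local equation $h$ of $D$ at $x$; symmetry of $D$ forces $h\circ[-1]=u\,h$ with $u$ a unit, and since the leading form of $h$ has degree $m(x)$ — the Samuel multiplicity of $\mathcal{O}_{D,x}$ — while the leading form of $h\circ[-1]$ is $(-1)^{m(x)}$ times it, we get $u(x)=(-1)^{m(x)}$. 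Now $\mathcal{L}=\mathcal{O}(D)$, and as $[-1]^*D=D$ the constant section $1$ of $\mathcal{L}$ and its pullback, having equal divisors, determine an isomorphism $\mathcal{L}\xrightarrow{\sim}[-1]^*\mathcal{L}$; working in the local trivializations $h^{-1}$ of $\mathcal{L}$ and $(h\circ[-1])^{-1}$ of $[-1]^*\mathcal{L}$ one checks this isomorphism restricts over a $2$-torsion point $y$ to multiplication by $u_y(y)=(-1)^{m(y)}$. In particular over $0$ it is $(-1)^{m(0)}$, so the normalized $\phi$ is this isomorphism rescaled by $(-1)^{m(0)}$, whence $e_*^{\mathcal{L}}(x)=(-1)^{m(0)}(-1)^{m(x)}=(-1)^{m(x)-m(0)}$.

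Part (ii) is the main obstacle. Two elementary inputs reduce it to a classical fact. First, $e_*^{\mathcal{L}_1\otimes\mathcal{L}_2}=e_*^{\mathcal{L}_1}\,e_*^{\mathcal{L}_2}$ for symmetric $\mathcal{L}_1,\mathcal{L}_2$ (normalized isomorphisms tensor), and for $\mathcal{N}\in\widehat{A}[2]$ — which is automatically symmetric — one has $e_*^{\mathcal{N}}(w)=e_2(w,\mathcal{N})$, where $e_2\colon A[2]\times\widehat{A}[2]\to\mu_2$ is the Weil pairing; this last identity is a standard description of $e_2$. Second, for $z\in A[2]$ set $\mathcal{M}_z:=t_z^*\mathcal{L}\otimes\mathcal{L}^{-1}$, a $2$-torsion point of $\widehat{A}$, namely $\phi_{\mathcal{L}}(z)$ where $\phi_{\mathcal{L}}\colon A\to\widehat{A}$ is the polarization; since $(t_z^*\phi)_{|0}=\phi_{|z}$ is multiplication by $e_*^{\mathcal{L}}(z)$, the normalized isomorphism of $t_z^*\mathcal{L}$ is $e_*^{\mathcal{L}}(z)^{-1}t_z^*\phi$, which restricted over $w\in A[2]$ yields the translation identity $e_*^{t_z^*\mathcal{L}}(w)=e_*^{\mathcal{L}}(z)^{-1}e_*^{\mathcal{L}}(z+w)$. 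Feeding $t_z^*\mathcal{L}=\mathcal{L}\otimes\mathcal{M}_z$ into the first input and comparing gives $e_*^{\mathcal{L}}(z+w)=e_*^{\mathcal{L}}(z)\,e_*^{\mathcal{L}}(w)\,e_2(w,\phi_{\mathcal{L}}(z))$, so $e_*^{\mathcal{L}}$ is a quadratic function with associated bilinear form $(z,w)\mapsto e_2(w,\phi_{\mathcal{L}}(z))$. It then remains to match this with the commutator pairing $e^{\mathcal{L}^2}$ on $A[2]\subseteq K(\mathcal{L}^2)$ (Lemma \ref{KMultn}): that the commutator pairing of a theta group equals the Weil pairing of the corresponding polarization isogeny is the delicate classical point, and here I would simply invoke Mumford \cite{EqDefAV} — indeed Proposition \ref{QFProp} in its entirety is due to Mumford.
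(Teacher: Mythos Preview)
The paper does not prove this proposition at all; its ``proof'' consists entirely of page references to Mumford's \emph{On the equations defining abelian varieties}, which is where these properties are originally established. Your proposal therefore goes well beyond what the paper does.

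Your arguments for (i) and (iii) are correct and are essentially Mumford's own. Your argument for (iv) is also the right one: the canonical isomorphism $\mathcal{O}(D)\to[-1]^*\mathcal{O}(D)$ arising from $[-1]^*D=D$ is computed at a $2$-torsion point by comparing local equations, and the leading-form argument extracts the sign $(-1)^{m(x)}$; normalizing at $0$ supplies the factor $(-1)^{m(0)}$. This is precisely Mumford's proof of his Proposition~2.

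For (ii) your route genuinely differs from Mumford's. He deduces the quadratic property from the machinery of the maps $\varepsilon_2,\eta_2$ between theta groups (the same maps that appear just below in the paper), together with Proposition~\ref{etae*}; the identity comes out as a corollary of how $\eta_2$ interacts with commutators. Your approach instead uses the translation identity $e_*^{t_z^*\mathcal{L}}(w)=e_*^{\mathcal{L}}(z)^{-1}e_*^{\mathcal{L}}(z+w)$ together with multiplicativity in $\mathcal{L}$ and the identification $e_*^{\mathcal{N}}=e_2(\,\cdot\,,\mathcal{N})$ for $\mathcal{N}\in\widehat{A}[2]$. This is a clean alternative and avoids the theta-group formalism, at the cost of importing the comparison $e^{\mathcal{L}^2}(z,w)=e_2(w,\phi_{\mathcal{L}}(z))$ as a black box---a fact that you correctly flag as the one nontrivial external input and attribute to Mumford. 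Both approaches are valid; yours is arguably more self-contained modulo that one identity, while Mumford's is more uniform with the rest of the symmetric theta-structure machinery used later in the paper.
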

\begin{proof}
See \cite[p. 304, First Properties i), iii)]{EqDefAV} for i) and iii). Assertion ii) is \cite[p. 314, Corollary 1]{EqDefAV}. For iv) we refer to \cite[p. 307, Proposition 2]{EqDefAV}.
\end{proof}
For a symmetric line bundle $\mathcal{L}$ Mumford defines two homomorphism $\delta_{-1}: \mathcal{G}(\mathcal{L})\rightarrow \mathcal{G}(\mathcal{L})$ and $\eta_n: \mathcal{G}(\mathcal{L}^n)\rightarrow \mathcal{G}(\mathcal{L})$ for any $n\in \mathbb{N}$.
\begin{definition}
Let $\psi: \mathcal{L}\stackrel{\sim}{\rightarrow} [-1]^*\mathcal{L}$ be an isomorphism. We define the map
$$\delta_{-1}: \tG{}\rightarrow \tG{}$$
$$\delta_{-1}(x, \phi)=(-x, (t_{-x}^* \psi) \circ ([-1]^* \phi) \circ \psi) $$
Where the latter is the following composition:
$$\begin{tikzcd}
\mathcal{L} \arrow{r}{\psi}& \left[-1\right]^* \mathcal{L} \arrow{r}{\left[-1\right]^* \phi}  & \left[-1\right]^* t_x^* \mathcal{L} \arrow[equal]{r}&  t_{-x}^* \left[-1\right]^*\mathcal{L} & t_{-x}^* \mathcal{L} \arrow{l}{t_{-x}^* \psi}
\end{tikzcd}$$
\end{definition}
The definition of $\eta_ n$ is as follows:
\begin{definition}\label{Defeta} Since $\mathcal{L}$ is symmetric, there is an isomorphism $\psi: \mathcal{L}^{n^2}\stackrel{\sim}{\rightarrow  }[n]^* \mathcal{L}$. 
We define
$$\eta_n: \mathcal{G}(\mathcal{L}^n) \rightarrow \mathcal{G}(\mathcal{L}),\, \eta_n(x, \phi)=(n x, \rho)$$

where $\rho$ is the unique \footnote{Uniqueness is clear and existence follows from Lemma \ref{KMultn}} isomorphism $\rho: \mathcal{L}\stackrel{\sim}{\rightarrow} t_{nx} \mathcal{L}$ such that the diagram
$$ \begin{tikzcd}
\mathcal{L}^{n^2} \arrow["\phi^n",r]\arrow["\psi",dd]& t_x^*\mathcal{L}^{n^2}\arrow["t_x^* \psi", d]\\
 & t_x^*(\left[n\right]^* \mathcal{L}) \arrow[equal]{d}\\
\left[n\right]^*\mathcal{L} \arrow{r}[swap]{\left[n \right]^*\rho}& \left[ n \right]^* t_{nx}^* \mathcal{L}
\end{tikzcd}$$
commutes.
\end{definition}
It is clear that the definition of $\eta_n$ is independent of the choice of $\psi$.
\par The following formula of Mumford shows that the map $\eta_n$ captures the quadratic form $e_*$:
\begin{prop}\label{etae*} Let $z\in \mathcal{G}(\mathcal{L}^2)(\overline{k})$ be an element of order $2$ mapping to $x\in K(\mathcal{L}^2)(\overline{k})$. Then $\eta_2(z)$ is in $\overline{k}^\times$ and
$$\eta_2(z)=e^\mathcal{L}_*(x)$$
\end{prop}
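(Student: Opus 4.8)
The plan is to dispose of the first assertion formally and prove the formula by a fibrewise computation.

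\textbf{First assertion and set-up.} The map $\mathcal G(\mathcal L^2)\to A$, $(y,\phi)\mapsto y$, is a group homomorphism, so $z^2=1$ forces $2x=0$, i.e.\ $x\in A[2]$. By Definition \ref{Defeta} we then have $\eta_2(z)=(2x,\rho)=(0,\rho)$, which lies in the kernel of $\mathcal G(\mathcal L)\to A$; for ample $\mathcal L$ this kernel is $\mathbb G_m$, so evaluating on the $k$-point $z$ gives $\eta_2(z)\in\mathbb G_m(k)=k^\times$, namely the scalar $c$ with $\rho=c\cdot\id_{\mathcal L}$. To compute $c$, simplify the diagram of Definition \ref{Defeta} using $2x=0$: since $[2]\circ t_x=t_{2x}\circ[2]=[2]$, both $t_x^*[2]^*\mathcal L$ and $[2]^*t_{2x}^*\mathcal L$ are canonically $[2]^*\mathcal L$ and the diagram collapses to the single identity $[2]^*\rho\circ\psi=t_x^*\psi\circ\phi^2$ of isomorphisms $\mathcal L^4\xrightarrow{\sim}[2]^*\mathcal L$, where $\phi$ is the second component of $z$, where $\phi^2=\phi^{\otimes 2}\colon\mathcal L^4\to t_x^*\mathcal L^4$, and where $\psi$ is any isomorphism $\mathcal L^4\xrightarrow{\sim}[2]^*\mathcal L$ (the value of $c$ is independent of this choice). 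Restricting this identity to the fibres over $0$ and over $x$, and using that $z^2=1$ restricts on fibres to $\phi|_x=(\phi|_0)^{-1}$, yields two scalar relations whose product gives $c^2=1$, consistent with Proposition \ref{QFProp} i).

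\textbf{Identifying the sign.} I pass to divisors. By Hilbert $90$ for the $[-1]$-action on $k(A)$ every symmetric line bundle is $\mathcal O(D)$ with $[-1]^*D=D$, so (replacing $\mathcal L$ by an isomorphic bundle, and reducing non-closed $k$ to the closed case by compatibility of $e_*$ and $\eta_2$ with base extension) I may assume $\mathcal L=\mathcal O(D)$ with $D$ symmetric; by Proposition \ref{QFProp} iv) it then suffices to prove $c=(-1)^{m(x)-m(0)}$. In the divisor picture $z$ corresponds to a rational function $g$ with $(g)=2\bigl(D-t_x^*D\bigr)$ normalised by the order-$2$ relation $g\cdot(g\circ t_x)=1$, and $\psi$ to a rational function $\vartheta$ with $(\vartheta)=4D-[2]^*D$; taking $\vartheta=s^{\otimes 4}/[2]^*s$, with $s$ the symmetric section cutting out $D$, makes the relevant $[-1]$-symmetries explicit. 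The collapsed identity becomes $c\cdot\vartheta=g^2\cdot(\vartheta\circ t_x)$, hence $c=g(y)^2\,\vartheta(y+x)/\vartheta(y)$, which is a constant function of $y$. Evaluating this constant as $y\to 0$ in local coordinates at $0$ and at $x$ matched under translation by $x$, and using that the symmetry of $D$ forces $s$, $g$ and $\vartheta$ to be $[-1]$-eigenvectors with eigenvalue the parity of their vanishing order at $0$, the powers of $2$ coming from $[2]$ being multiplication by $2$ on tangent spaces cancel and the leading coefficients combine to exactly $(-1)^{m(x)-m(0)}$.

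\textbf{Main obstacle and an alternative.} The hard part is this last sign bookkeeping: arranging trivialisations and coordinates so that the scalar ambiguity in $\psi$, the auxiliary trivialisations and the factors of $2$ provably drop out, and so that the sign contributions of $[-1]$ at $0$ and at $x$ assemble into precisely $(-1)^{m(x)-m(0)}$ and not another power of $-1$. If one prefers to avoid this, the identity $\eta_2(z)=e_*^{\mathcal L}(x)$ is a closed condition on a suitable moduli stack of triples $(A,\mathcal L,x)$ over $\mathbb Z[1/2]$ and holds on the complex fibre by the classical functional equation of Riemann's theta function, hence holds identically.
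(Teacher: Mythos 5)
The paper does not actually prove this proposition: it is quoted as "Mumford's formula" (it is \cite[\S 2, Proposition 6]{EqDefAV}), so the only fair comparison is between your attempt and Mumford's argument. Your first paragraph is fine and complete: $2x=0$ forces $\eta_2(z)=(0,\rho)$, the kernel of $\mathcal{G}(\mathcal{L})\to K(\mathcal{L})$ is $\mathbb{G}_m$, and restricting the collapsed identity $c\cdot\psi=t_x^*\psi\circ\phi^2$ to the fibres over $0$ and $x$ gives $c^2=1$. The reduction to the divisor identity $c\,\vartheta=g^2\cdot t_x^*\vartheta$ with $(g)=2(D-t_x^*D)$, $(\vartheta)=4D-[2]^*D$ is also correct (the vanishing orders do cancel: $\mathrm{ord}_0(g^2\,t_x^*\vartheta/\vartheta)=2(2m(0)-2m(x))+(4m(x)-m(0))-3m(0)=0$).

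The gap is the step you yourself flag as "the hard part", and it is not mere bookkeeping: it is the entire content of the proposition, and the mechanism you propose does not suffice to close it. All three functions $g$, $\vartheta$, $t_x^*\vartheta$ are $[-1]^*$-eigenfunctions, and applying $[-1]^*$ to $c\,\vartheta=g^2\,t_x^*\vartheta$ returns the same identity with the eigenvalues cancelling ($\epsilon_g^2\,\epsilon_\vartheta=\epsilon_\vartheta$), so the symmetry of $D$ alone gives no information about $c$. Likewise the eigenvalue of an eigenfunction only relates its leading coefficients at $y$ and $-y$, which at $y=x$ (where $-x=x$) is a tautology; what you actually need is to compare the leading coefficient of $\vartheta$ at $0$ with that at $x$, and the datum controlling this is precisely the normalized isomorphism $\mathcal{L}\to[-1]^*\mathcal{L}$ restricted to the fibre at $x$ --- i.e.\ $e_*^{\mathcal{L}}(x)$ itself by definition. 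Mumford's proof works directly with that normalized isomorphism rather than with Proposition \ref{QFProp} iv), and some such additional input must enter; as written your computation cannot produce the sign. The fallback via specialization from $\mathbb{C}$ is also only a sketch: it needs a lift of the triple $(A,\mathcal{L},x)$ to characteristic $0$ and an argument that both sides are locally constant on a connected parameter space, neither of which is supplied. So the proposal establishes $\eta_2(z)\in\{\pm1\}$ but not the identity $\eta_2(z)=e_*^{\mathcal{L}}(x)$.
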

\begin{proof}
See \cite[\S 2, Proposition 6]{EqDefAV}.
\end{proof}
\subsection{Theta structures}
Let $A$ be an abelian variety and $\mathcal{L}$ an ample line bundle with $\cha(k) \nmid \deg(\mathcal{L})$. Although it would not be necessary for this section we will stick with the convention that we view the theta group as a group scheme. Otherwise we would have to hop back and forth between statements written in terms of group schemes and its $\overline{k}$-valued points respectively.
\begin{definition}
For a sequence of positive integers $\delta=(d_1,\ldots, d_g)$ we denote $Z_\delta=\bigoplus_{i=1}^g \mathbb{Z}/d_i \mathbb{Z}$.
\par Now we define a group scheme $\mathcal{G}(\delta)$ as follows: As a scheme
$$\mathcal{G}(\delta)=\mathbb{G}_m\times \underline{ Z_\delta} \times \underline{Z_\delta}^D$$
Where $\underline{(\cdot)}$ denotes the constant group scheme. and $(\cdot)^D$ denotes the Cartier dual.
\par The group structure is defined on $T$-valued points by
$$(s, x, \chi)\cdot(s', x', \chi')=(s s' \chi'(x), x+x', \chi+\chi')$$
where we use the identification $\underline{Z_\delta}^D(T)=\Hom_T(\underline{Z_\delta}_T, \mathbb{G}_{m,T} )$.
\end{definition}
\begin{definition} 
 A \emph{theta structure} is an isomorphism $\mathcal{G}(\mathcal{L}) \cong \mathcal{G}(\delta)$ restricting to the identity on $\mathbb{G}_m$.
\end{definition}
\begin{definition}
We define $[2]: Z_\delta\rightarrow Z_{2\delta},\, x \mapsto 2x$ and we denote by $r: Z_{2\delta} \rightarrow Z_{\delta},\, x\mapsto x$ the natural quotient map. We denote by $[2]^D: \underline{Z_{2\delta}}^D\rightarrow \underline{Z_\delta}^D$ and $r^D: \underline{Z_{2\delta}}^D\rightarrow \underline{Z_\delta}^D$ the induced dual maps.
\par We define $E_2:\mathcal{G}(\delta)\rightarrow \mathcal{G}(2 \delta)$ to be the map
$$E_2((s, x , \chi))=(s^2, [2]x, r^D(\chi))$$
\par We define $D_{-1}: \mathcal{G}(\delta)\rightarrow \mathcal{G}(\delta)$ to be the map
$$D_{-1}((s, x , \chi))=(s, -x, -\chi)$$
\par We define $H_2:\mathcal{G}(2\delta)\rightarrow \mathcal{G}(\delta)$ to be the map
$$H_2((s, x , \chi))=(s^2, r(x), [2]^D(\chi))$$
\end{definition}
\begin{definition} Assume in addition that $\mathcal{L}$ is totally symmetric (see \cite[Definition on p. 305]{EqDefAV}). A $\vartheta$-structure
$$f: \mathcal{G}(\mathcal{L})\stackrel{\sim}{\rightarrow} \mathcal{G}(\delta)$$
is called \emph{symmetric} if $f\circ \delta_{-1}=D_{-1}\circ f$.
\par A pair of symmetric theta structures $f_1$ for $\mathcal{L}$ and $f_2$ for $\mathcal{L}^2$ will be called \emph{compatible} if
$$f_2\circ \varepsilon_2=E_2\circ f_1$$
$$f_1 \circ \eta_2=H_2\circ f_2 $$
\end{definition}
We will now define a particular kind of theta structure on a principally polarized abelian variety. Let $(A, \lambda)$ be a p.p.a.v. Recall that this means that $\lambda: A \stackrel{\sim}{\rightarrow} A^\vee$ is an isomorphism satisfying $\lambda=\lambda^\vee$ such that the pullback of the Poincar\'e line bundle along the composition $A\stackrel{\Delta}{\rightarrow} A\times A \stackrel{(\id_{A}, \lambda)}{\longrightarrow} A\times A^\vee$ is ample. Denote this pullback by $\mathcal{L}_2$. It is well-known that the line bundle $\mathcal{L}_2$ is symmetric and induces $2 \lambda$.
\begin{definition}
A principally polarized abelian variety with a $\Gamma(4,8)$-level structure is a tuple $(A,\lambda, f_2)$ where $(A, \lambda)$ is a principally polarized abelian variety over a field $k$ with $\cha(k)\neq 2$ and $f_2$ is a symmetric theta structure for $\mathcal{L}_2^2$.
\end{definition}
Since $K(\mathcal{L}_2^2)=A[4]$, a $\Gamma(4,8)$-level structure induces an isomorphism $A[4]\cong \underline{\mathbb{Z}/4\mathbb{Z}^g} \oplus \mu_4^g$.
\par Recall that every principal polarization is induced from a symmetric degree $1$ line bundle. This line bundle is not unique, however. But one advantage of a $\Gamma(4,8)$-level structure is that it fixes a canonical choice $\mathcal{L}$. It is characterized by putting the quadratic form $e_*^\mathcal{L}(\cdot)$ into normal form:
\begin{lem}\label{Gamma48Can} 
Let $(A,\lambda, f_2)$ be a p.p.a.v. with a $\Gamma(4,8)$-level structure. Then there is a unique symmetric line bundle $\mathcal{L}$ such that:
\begin{itemize}
\item[i)] $\mathcal{L}^2\cong \mathcal{L}_2$.
\item[ii)] The quadratic form $e_*^\mathcal{L}: A[2](k)\rightarrow \{\pm 1\}$ is given by the matrix
$$\begin{pmatrix}
0 & I_g\\
0 & 0
\end{pmatrix}$$
with respect to the basis of $A[2](k)$ induced by $f_2$.
\end{itemize}
\end{lem}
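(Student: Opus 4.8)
The plan is to reduce the statement to the elementary classification of quadratic refinements of a non-degenerate symplectic form over $\mathbb{F}_2$, using that all symmetric square roots of $\mathcal{L}_2$ form a torsor and that $e_*$ transforms predictably along this torsor. First I would identify the relevant set of line bundles. A symmetric $\mathcal{L}$ satisfies $\mathcal{L}^2\cong\mathcal{L}_2$ if and only if it induces $\lambda$: for a symmetric $\mathcal{L}$ inducing $\lambda$ one has $\mathcal{L}^2\cong\mathcal{L}\otimes[-1]^*\mathcal{L}\cong\mathcal{L}_2$ by the classical description of $\mathcal{L}_2$, and conversely $\mathcal{L}^2\cong\mathcal{L}_2$ forces the polarization of $\mathcal{L}$ to be $\lambda$ because $\Hom(A,A^\vee)$ is torsion free. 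Such $\mathcal{L}$ exist (any line bundle inducing $\lambda$ becomes symmetric after twisting by a suitable point of $A^\vee$, which is divisible), so I fix one, $\mathcal{L}_0$, and let $D_0$ be its theta divisor; since $h^0(\mathcal{L}_0)=1$, the isomorphism $[-1]^*\mathcal{L}_0\cong\mathcal{L}_0$ forces $[-1]^*D_0=D_0$, so $D_0$ is symmetric. The isomorphism $A\to A^\vee$ attached to $\mathcal{L}_0$ identifies $A[2](k)$ with $A^\vee[2](k)$, so the set in question is exactly $\{t_a^*\mathcal{L}_0 : a\in A[2](k)\}$, a torsor under translation by $A[2](k)$ (which has $\mathbb{F}_2$-dimension $2g$ as $\cha(k)\neq 2$).

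Next I would establish the translation law for $e_*$. By Proposition \ref{QFProp}(ii) each $e_*^{t_a^*\mathcal{L}_0}$ is a quadratic form $A[2](k)\to\{\pm1\}$ with associated bilinear form $e^{\mathcal{L}_2}|_{A[2]}$, which by Mumford's theory is the non-degenerate, $\{\pm1\}$-valued commutator (Weil) pairing on $K(\mathcal{L}_2)=A[2]$. Using Proposition \ref{QFProp}(iv) for the symmetric divisors $D_0$ and $t_a^*D_0$, together with (ii), one obtains
$$e_*^{t_a^*\mathcal{L}_0}(x)=(-1)^{m(x+a)-m(a)}=e_*^{\mathcal{L}_0}(x+a)\cdot e_*^{\mathcal{L}_0}(a)=e_*^{\mathcal{L}_0}(x)\cdot e^{\mathcal{L}_2}(a,x),$$
where $m$ denotes multiplicity along $D_0$. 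Since $e^{\mathcal{L}_2}$ is non-degenerate, $a\mapsto e^{\mathcal{L}_2}(a,-)$ is an isomorphism $A[2](k)\xrightarrow{\sim}\Hom(A[2](k),\{\pm1\})$; hence $a\mapsto e_*^{t_a^*\mathcal{L}_0}$ is an equivariant map of torsors under this group, so it is a bijection from $A[2](k)$ onto the set of all quadratic refinements of $e^{\mathcal{L}_2}|_{A[2]}$.

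Finally I would bring in the level structure. As noted just before the lemma, $f_2$ gives $A[4]\cong\underline{\mathbb{Z}/4\mathbb{Z}^g}\oplus\mu_4^g$; multiplying the standard generators by $2$ yields a basis of $A[2](k)\cong(\mathbb{Z}/2\mathbb{Z})^g\oplus\mu_2^g$ that is symplectic for $e^{\mathcal{L}_2}$ (compatibility of the commutator pairings of $\mathcal{L}_2^2$ and $\mathcal{L}_2$ with multiplication by $2$). Any two quadratic refinements of a symplectic form differ by a homomorphism to $\{\pm1\}$, so there is exactly one refinement whose matrix in this basis is $\begin{pmatrix}0&I_g\\0&0\end{pmatrix}$, namely the one vanishing on every basis vector. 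Pulling it back through the bijection of the previous paragraph produces the unique symmetric $\mathcal{L}$ with $\mathcal{L}^2\cong\mathcal{L}_2$ and $e_*^\mathcal{L}$ in normal form, which is the assertion.

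The genuinely delicate point is keeping the sign conventions straight: both in the translation formula above and in the claim that the induced $2$-torsion basis is symplectic for $e^{\mathcal{L}_2}$ rather than for $e^{\mathcal{L}_2^2}$ (which restricts trivially to $A[2]$); everything else is formal once Proposition \ref{QFProp} is granted. If one prefers, the translation formula can be replaced by the classical identification of $e_*^\alpha$ for $\alpha\in A^\vee[2]$ with the Weil pairing under $A\cong A^{\vee\vee}$, but deriving it from Proposition \ref{QFProp}(ii),(iv) keeps the argument internal to the paper.
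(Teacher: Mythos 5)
Your proposal is correct and follows essentially the same route as the paper: produce one symmetric square root of $\mathcal{L}_2$, observe that the others are its translates by $A[2](k)$, and use the fact that a quadratic refinement of the symplectic form $e^{\mathcal{L}_2}$ on $A[2](k)$ is shifted by $e^{\mathcal{L}_2}(a,\cdot)$ under translation by $a$, so that exactly one translate has $e_*$ in normal form; you merely spell out the ``elementary calculation'' that the paper leaves implicit. The one place you are less careful than the paper is the existence of $\mathcal{L}_0$ over $k$ itself: the paper does not assume $k$ algebraically closed and explicitly defers the rationality of the square root to the proof of Proposition \ref{Mult2}, whereas your appeal to divisibility of $A^\vee(k)$ is only valid after passing to $\bar{k}$.
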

\begin{proof}
By \cite[p. 214]{MumAV} there is a line bundle $\mathcal{L}$ such that $\mathcal{L}^2\cong \mathcal{L}_2$, at least after an extension of the ground field. We will see in the proof of Theorem \ref{Mult2} that this field extension is not necessary.
\par It is elementary to verify that a line bundle whose square is symmetric, must be symmetric. Therefore $\mathcal{L}$ is symmetric. However, $\mathcal{L}$ is only unique up to translating by a $2$-torsion point.
\par On the other hand notice that the $\vartheta$-structure $f_2$ induces an isomorphism $A[4]\cong \underline{\mathbb{Z}/4\mathbb{Z}^g} \oplus \mu_4^g$. This implies that $A[2]$ is constant and we obtain a basis of $A[2](k)$. The quadratic form $e_*^\mathcal{L}(\cdot)$ is given by an upper triangular matrix $X\in \Mat_{2g,2g}(\mathbb{F}_2)$ with respect to our basis of $A[2](k)$. It is easy to check from the definitions that this basis is symplectic. Therefore Proposition \ref{QFProp} ii) implies 
$$X+X^t=\begin{pmatrix}
0 & I_g\\
-I_g & 0
\end{pmatrix}=\begin{pmatrix}
0 & I_g\\
I_g & 0
\end{pmatrix}$$
Thus $X$ differs from $\begin{pmatrix}
0 & I_g\\
0 & 0
\end{pmatrix}$ only on the diagonal. An elementary calculation shows that we can achieve $X=\begin{pmatrix}
0 & I_g\\
0 & 0
\end{pmatrix}$ by translating with a unique point $P\in A[2](k)$. Therefore $t_P^*(\mathcal{L})$ has all the desired properties.
\end{proof}
\subsection{Representations of a theta group}
Let $A$ be an abelian variety and $\mathcal{L}$ an ample line bundle.
\begin{definition}
We define a representation of $\tG{}$ on $\HH^0(A, \mathcal{L})$ as follows:
\par For a $k$-algebra $R$ and a point $(x, \phi)\in \tG{}(R)$ we define
$$U_{(x, \phi)}: \HH^0(A, \mathcal{L})\otimes_k R \rightarrow \HH^0(A, \mathcal{L})\otimes_k R$$
to be the composition
$$\HH^0(A, \mathcal{L})\otimes_k R=\HH^0(A_R, \mathcal{L}_R)\stackrel{\phi}{\rightarrow} \HH^0(A_R, t_x^*\mathcal{L}_R)\stackrel{t_{-x}^*}{\rightarrow} \HH^0(A_R, \mathcal{L}_R)=\HH^0(A, \mathcal{L})\otimes_k R$$
\end{definition}
\begin{definition}\label{DefVdelta} We define a representation of $\mathcal{G}(\delta)$ as follows: The underlying vector space is $V(\delta)=k^{Z_\delta}$, the vector space of maps of sets $f: Z_\delta\rightarrow k$. We define an action of $\mathcal{G}(\delta)$ as follows:
$$(U_{r,x,\chi} f)(y))=r \cdot \chi(y) \cdot f(x+y) $$
\end{definition}
Recall the following theorem:
\begin{thm}
\begin{enumerate}
\item[i)] The representation of $\tG{}$ on $\HH^0(A, \mathcal{L})$ is absolutely irreducible and of weight 1.
\item[ii)] The representation of $\mathcal{G}(\delta)$ on $V(\delta)$ is absolutely irreducible and of weight 1.
\item[iii)] There is a unique absolutely irreducible weight 1 representation of $\tG{}$.
\end{enumerate}
\end{thm}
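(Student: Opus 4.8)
The plan is to prove (ii) by a direct computation, deduce (iii) from it by a Stone--von Neumann/Mackey argument built on a Lagrangian decomposition of $K(\mathcal{L})$, and then obtain (i) from (iii) by a dimension count. For (ii): the subgroup scheme $\mathbb{G}_m \subseteq \mathcal{G}(\delta)$ acts on $V(\delta)$ by $U_{s,0,0}f = s\cdot f$, so the representation has weight $1$. For irreducibility one uses that the factor $\underline{Z_\delta}^D \subseteq \mathcal{G}(\delta)$ is diagonalizable with character group $Z_\delta$, and that $V(\delta) = \bigoplus_{y \in Z_\delta} k\,\delta_y$ is exactly its weight decomposition (from $U_{1,0,\chi}\,\delta_y = \chi(y)\,\delta_y$). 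Any subrepresentation is in particular a subcomodule under a diagonalizable group scheme, hence graded, hence of the form $\bigoplus_{y\in S} k\,\delta_y$ for some $S \subseteq Z_\delta$; and since $U_{1,x,0}\,\delta_y = \delta_{y-x}$, the factor $\underline{Z_\delta}$ permutes the $\delta_y$ transitively, so a nonzero subrepresentation has $S = Z_\delta$. The argument is insensitive to base field extension, giving absolute irreducibility. (In the inseparable case one must argue via ``subcomodule $\Rightarrow$ graded'' rather than by diagonalizing over $\overline{k}$, since $\mu_{d_i}(\overline{k})$ need not separate the points of $\mathbb{Z}/d_i\mathbb{Z}$.)

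For (iii), let $(V,U)$ be an absolutely irreducible weight-$1$ representation of $\mathcal{G}(\mathcal{L})$. Recall (Mumford) that $\mathcal{G}(\mathcal{L})$ is a central extension $1 \to \mathbb{G}_m \to \mathcal{G}(\mathcal{L}) \to K(\mathcal{L}) \to 1$ whose commutator induces a nondegenerate alternating pairing $e^{\mathcal{L}}\colon K(\mathcal{L})\times K(\mathcal{L}) \to \mathbb{G}_m$. Choose a maximal isotropic subgroup scheme $K_1 \subseteq K(\mathcal{L})$, a maximal level subgroup $\widetilde{K}_1 \subseteq \mathcal{G}(\mathcal{L})$ above it (commutative, because $K_1$ is isotropic), and a complementary maximal isotropic $K_2$ with $K(\mathcal{L}) = K_1 \times K_2$ and $K_2 \cong K_1^{D}$ via $e^{\mathcal{L}}$. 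Decompose $V|_{\widetilde{K}_1}$ into its weight-$1$ isotypic components $V = \bigoplus_\chi V_\chi$; the index set is a torsor under the character group of $K_1$, the group $\mathcal{G}(\mathcal{L})$ permutes the components with $K(\mathcal{L})$ acting through translation by $e^{\mathcal{L}}(\cdot,-)$, and nondegeneracy makes $K_2$ act simply transitively on the set of occurring $\chi$. Then the stabilizer of an occurring $\chi_0$ is $\widetilde{K}_1$, which acts on $V_{\chi_0}$ by the scalar character $\chi_0$; spanning $\mathcal{G}(\mathcal{L})\cdot\ell$ for a line $\ell \subseteq V_{\chi_0}$ gives a subrepresentation of dimension $\#K_1$, so by irreducibility each $V_\chi$ is one-dimensional, $V \cong \mathrm{Ind}_{\widetilde{K}_1}^{\mathcal{G}(\mathcal{L})}\chi_0$, and all these inductions (for varying $\chi_0$) are isomorphic. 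Hence $V$ is unique up to isomorphism and $\dim_k V = \#K_1 = \sqrt{\#K(\mathcal{L})}$. (Over a non-closed $k$ uniqueness persists: an absolutely irreducible representation has scalar endomorphisms, so its $k$-forms are classified by $H^1(k,\mathbb{G}_m) = 1$.)

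For (i): the subgroup $\mathbb{G}_m \subseteq \mathcal{G}(\mathcal{L})$ acts on $\HH^0(A,\mathcal{L})$ by scalars, directly from the definition of $U_{(x,\phi)}$, so the representation has weight $1$; and by Riemann--Roch for abelian varieties $\dim_k \HH^0(A,\mathcal{L}) = \sqrt{\#K(\mathcal{L})}$. Being nonzero and finite-dimensional it contains a simple subrepresentation, which is of weight $1$, hence by (iii) isomorphic to the unique absolutely irreducible weight-$1$ representation, of the same dimension; therefore $\HH^0(A,\mathcal{L})$ coincides with that subrepresentation, and the identical dimension count after base change to $\overline{k}$ shows it is absolutely irreducible of weight $1$.

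The main obstacle is (iii) in the possibly inseparable setting: one must produce the maximal isotropic subgroup schemes of $K(\mathcal{L})$ and the level subgroups of $\mathcal{G}(\mathcal{L})$ above them as honest subgroup schemes, and carry out the weight decomposition and the identification with an induced representation at the level of comodules over (non-\'etale) finite commutative group schemes rather than over $\overline{k}$-points --- which is exactly why the representation theory of finite commutative group schemes is developed later in the paper. When $\cha(k) \nmid \deg(\mathcal{L})$ everything is \'etale and this is classical; in general the cleanest course is to invoke \cite[\S1]{EqDefAV} (or \cite[Ch.~8]{vdGMooAV}) for the scheme-theoretic statement.
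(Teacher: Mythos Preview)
Your sketch is essentially correct and, in fact, more detailed than the paper's own proof, which consists entirely of citations: (i) to Sekiguchi, (ii) to \cite[p.~295]{EqDefAV}, and (iii) to \cite[Theorem~8.32]{vdGMooAV} over $\overline{k}$ together with a descent argument \cite[Lemma~3.17]{AP}. Your direct proof of (ii) is the standard computation and is fine; your descent argument for (iii) via $H^1(k,\mathbb{G}_m)=1$ is exactly what the paper invokes from \cite{AP}.

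The main structural difference is that you \emph{deduce} (i) from (iii) by the Riemann--Roch dimension count, whereas the paper treats (i) as an independent citation. Your route is cleaner and avoids an extra reference; the only care needed is that ``contains a simple subrepresentation, hence by (iii) \ldots'' must be run over $\overline{k}$ first (simple over $k$ need not be absolutely simple), which you do say at the end.

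One technical remark on your Stone--von~Neumann sketch for (iii): decomposing $V|_{\widetilde{K}_1}$ into one-dimensional character spaces $V_\chi$ presupposes that $\widetilde{K}_1\cong K_1$ is of multiplicative type. In the genuinely inseparable situation (e.g.\ $K(\mathcal{L})\supseteq\alpha_p\times\alpha_p$) no maximal isotropic need be diagonalizable, so the isotypic decomposition as written does not exist; the correct replacement is to take $K_2$ of multiplicative type (whenever possible) and decompose under $\widetilde{K}_2$, or to argue via comodules as in \cite[Ch.~8]{vdGMooAV}. You already flag this as the main obstacle and defer to the references there, which is precisely what the paper does, so this is not a gap so much as a point where your exposition and the paper converge.
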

\begin{proof}
For i) see \cite[p. 710]{Sekiguchi}. The assertion ii) is proven in \cite[p. 295]{EqDefAV}. iii) is proven in \cite[theorem 8.32]{vdGMooAV} in the case where $k$ is algebraically closed. The general case follows with a descent argument, see \cite[Lemma 3.17]{AP}
\end{proof}
We will now define theta nullpoints. Assume that $p\nmid \deg(\mathcal{L})$ and $\tG{}$ admits a theta structure of type $\delta$. By \cite[p. 294]{EqDefAV} the latter condition follows from the former, at least after a finite separable field extension. This theta structure induces an isomorphism $\HH^0(A, \mathcal{L})\cong V(\delta)$ unique up to a scalar. Choose a trivialization $\mathcal{L}_{|0}\cong k$. This determines an evaluation at $0$ map $\ev_0: \HH^0(A, \mathcal{L}) \rightarrow k$
\begin{definition}
The \emph{theta nullpoint} $q_\mathcal{L}: Z_\delta \rightarrow k$ is defined to be the coordinate vector of the composition $V(\delta)\cong \HH^0(A, \mathcal{L})\stackrel{\ev_0}{\rightarrow} k$.
\end{definition}
Notice that the theta nullpoint is only defined up to a scalar. Furthermore, it will depend on the choice of the theta structure.
\subsection{Theta Nullvalues of principally polarized Abelian Varieties}
Throughout this section $(A, \lambda, f_2)$ will be a p. p. a. v. with a $\Gamma(4,8)$ level structure. By Lemma \ref{Gamma48Can} there is a canonical symmetric line bundle $\mathcal{L}$ inducing $\lambda$. We will choose $s\in \HH^0(A, \mathcal{L})$, a non-zero element. Then $s$ is unique up to a scalar. We will denote by $\Theta=\mathcal{V}(s)$ the divisor of vanishing of $s$, i.e. a \emph{theta divisor}.
\par Choose a trivialization $\mathcal{L}_{|0}\cong k$. This induces an evaluation map $\ev_0:\HH^0(A, \mathcal{L}^4)\rightarrow k$. Via $\ev_0$ and the theta structure $f_2$ on $\mathcal{L}^4$ we obtain the theta nullvalues $q_{\mathcal{L}^4}:Z_4\rightarrow k$ (well-defined up to a scalar). Frequently instead of $q$ one uses a partial Fourier transform of it:
\begin{definition} In the situation above choose a square root of $-1$, say $i$ in some field extension $K$ of $k$. For $a, b \in \ZZ^g$ we define
$$\vartheta_{a,b}=\sum_{c\in Z_4, c \equiv a (2)} i^{c^t b} q_{\mathcal{L}^4}(c)$$
\end{definition}
\begin{lem}
$\vartheta_{a,b}$ is in $k$ and independent of the choice of $i$.
\par Furthermore, $\vartheta_{a,b}=0$ if $a^t b\equiv 1 (2)$.
\end{lem}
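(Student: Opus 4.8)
The plan is to reduce all three assertions to the single fact that the theta nullpoint is symmetric, $q_{\mathcal{L}^4}(-c)=q_{\mathcal{L}^4}(c)$ for every $c\in Z_4$, and then to extract that symmetry from the hypothesis that $f_2$ is a \emph{symmetric} theta structure for the totally symmetric line bundle $\mathcal{L}^4$.

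For the reduction, observe that $\mathcal{L}^4$, the theta structure $f_2$ and the trivialization of $\mathcal{L}_{|0}$ are all defined over $k$, so $q_{\mathcal{L}^4}$ has values in $k$; the only part of $\vartheta_{a,b}$ that is not a priori in $k$ is $i\in K:=k(i)$. Reindex the defining sum by $c\mapsto -c$ — legitimate because $-c\equiv a\pmod 2$ whenever $c\equiv a\pmod 2$, so $c\mapsto -c$ permutes the index set — and use $i^{-c^t b}=(-1)^{c^t b}\,i^{c^t b}=(-1)^{a^t b}\,i^{c^t b}$ (the last equality holds since $c^t b\equiv a^t b\pmod 2$) to obtain
\[
\vartheta_{a,b}=(-1)^{a^t b}\sum_{c\in Z_4,\, c\equiv a(2)} i^{c^t b}\,q_{\mathcal{L}^4}(-c).
\]
Substituting $q_{\mathcal{L}^4}(-c)=q_{\mathcal{L}^4}(c)$, the right-hand side equals $(-1)^{a^t b}\vartheta_{a,b}$, so $\bigl(1-(-1)^{a^t b}\bigr)\vartheta_{a,b}=0$; since $\cha(k)\neq2$ this yields $\vartheta_{a,b}=0$ whenever $a^t b\equiv1\pmod2$, the last claim. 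The same computation with $i$ replaced by $-i$ shows that the two square roots of $-1$ give values differing by the factor $(-1)^{a^t b}$. Hence when $a^t b$ is even the two choices agree, so $\vartheta_{a,b}$ is fixed by $\operatorname{Gal}(K/k)$ and therefore lies in $k$ and is independent of the choice of $i$; when $a^t b$ is odd both choices equal $0$. This proves the lemma, granted the symmetry of $q_{\mathcal{L}^4}$.

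It remains to establish $q_{\mathcal{L}^4}(-c)=q_{\mathcal{L}^4}(c)$, and here I would argue as in Mumford's treatment of symmetric theta structures. The normalized isomorphism $\psi:\mathcal{L}^4\to[-1]^*\mathcal{L}^4$ induces an operator $T=[-1]^*\!\circ\psi_*$ on $\HH^0(A,\mathcal{L}^4)$ with $T^2=\id$; since $\psi$ and $[-1]^*$ are the identity on the fibre over $0$, one has $\ev_0\circ T=\ev_0$. Moreover $T$ implements $\delta_{-1}$ on the representation (this is what the definition of $\delta_{-1}$ encodes), so — $f_2$ being symmetric, $f_2\circ\delta_{-1}=D_{-1}\circ f_2$ — transporting $T$ through the essentially unique $\mathcal{G}$-equivariant isomorphism $\HH^0(A,\mathcal{L}^4)\cong V(\delta)$ produces an involution of $V(\delta)$ implementing $D_{-1}$. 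Since $W:f\mapsto(y\mapsto f(-y))$ is such an involution, absolute irreducibility forces the transported operator to equal $\mu W$ with $\mu^2=1$, so $\mu=\pm1$. The sign is $+1$, and this is precisely where total symmetry of $\mathcal{L}^4$ enters: the normalized isomorphism $\mathcal{L}^4\to[-1]^*\mathcal{L}^4$ is the tensor square of the normalized isomorphism $\mathcal{L}^2\to[-1]^*\mathcal{L}^2$, so $T$ acts on the fibre $(\mathcal{L}^4)_x$ over a point $x\in A[2]$ by $e_*^{\mathcal{L}^4}(x)=e_*^{\mathcal{L}^2}(x)^2$, which is $1$ by Proposition~\ref{QFProp}~i); hence the holomorphic Lefschetz fixed-point formula for $[-1]$ gives $\operatorname{tr}T=\sum_{x\in A[2]}e_*^{\mathcal{L}^4}(x)/2^{g}=\#A[2]/2^{g}=2^{g}=\#Z_4[2]=\operatorname{tr}W$, which excludes $-W$. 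Finally, writing $\ev_0$ in the coordinates of $V(\delta)$ as $f\mapsto\sum_c q_{\mathcal{L}^4}(c)\,f(c)$, the identity $\ev_0\circ W=\ev_0$ reads $\sum_c q_{\mathcal{L}^4}(-c)\,f(c)=\sum_c q_{\mathcal{L}^4}(c)\,f(c)$ for all $f$, i.e.\ $q_{\mathcal{L}^4}(-c)=q_{\mathcal{L}^4}(c)$.

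The manipulations of the second paragraph are routine; the genuine content, and the step I expect to be the main obstacle, is the symmetry of $q_{\mathcal{L}^4}$ — specifically, pinning down the sign $\mu=+1$, i.e.\ showing that a symmetric theta structure makes the $[-1]$-linearization correspond to $f\mapsto f(-\,\cdot\,)$ rather than to its negative. Total symmetry of $\mathcal{L}^4$ (equivalently, vanishing of $e_*^{\mathcal{L}^4}$ on $A[2]$) is exactly what makes this work; the trace argument above is one self-contained way to settle it, but the symmetry statement itself is classical and could also be quoted from Mumford's analysis of symmetric theta structures \cite{EqDefAV}.
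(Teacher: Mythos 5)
Your core computation coincides with the paper's: the paper likewise reindexes the sum by $c\mapsto -c$ and invokes the symmetry $q_{\mathcal{L}^4}(-c)=q_{\mathcal{L}^4}(c)$, which it simply cites as Mumford's inversion formula \cite[p.~331]{EqDefAV}; your Galois-descent argument for $k$-rationality is a harmless variant of the paper's direct observation that when $a^t b$ is even the exponent $c^t b$ is always even, so $i^{c^t b}=(-1)^{c^t b/2}$ already lies in $k$. Where you genuinely diverge is in reproving the inversion formula rather than quoting it. That argument is structurally sound (Schur's lemma plus the fact that both operators are involutions forces the transported operator to be $\mu W$ with $\mu=\pm 1$, and a trace comparison pins down the sign), but the one step that does not stand as written is the appeal to the holomorphic Lefschetz fixed-point formula: the paper's setting is characteristic $p>0$ (supersingular abelian varieties), where that analytic tool is not available. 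The identity $\operatorname{tr}T=2^{-g}\sum_{x\in A[2]}e_*^{\mathcal{L}^4}(x)=2^g$ is still correct, but it has to be taken from Mumford's algebraic computation of the dimensions of the $\pm 1$-eigenspaces of $T$ (the Inverse Formula of \cite{EqDefAV}, \S 2) --- and once one is citing that, one may as well cite the inversion formula $q_{\mathcal{L}^4}(-c)=q_{\mathcal{L}^4}(c)$ directly, as the paper does and as you yourself offer at the end. With that substitution your proof is complete; the extra material is a nice self-contained account of why the symmetry holds and correctly identifies total symmetry of $\mathcal{L}^4$ (i.e.\ $e_*^{\mathcal{L}^4}\equiv 1$ on $A[2]$) as the reason the sign comes out to be $+1$.
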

\begin{proof}
We show the first statement. Indeed, assume first $a^t b \equiv 0 (2)$. Then the exponent $c^t b$ in the definition of $\vartheta_{a,b}$ is always even. Therefore $i^{c^t b}=(-1)^{\frac{c^t b}{2}}$ is independent of the choice of $i$. Furthermore, $\vartheta_{a,b}$ is in $k$.
\par Assume now $a^t b \equiv 1 (2)$. Clearly it suffices to show $\vartheta_{a,b}=0$. Indeed, by Mumford's inversion formula \cite[p. 331]{EqDefAV} we have $q_{\mathcal{L}^4}(-x)=q_{\mathcal{L}^4}(x)$ for all $x\in Z_4$. Therefore
$$\vartheta_{a,b}=\sum_{c\in Z_4, c \equiv a (2)} i^{c^t b} q_{\mathcal{L}^4}(c)=\sum_{c\in Z_4, -c \equiv a (2)} i^{-c^t b} q_{\mathcal{L}^4}(-c)=$$
$$\sum_{c\in Z_4, c \equiv a (2)} (-i)^{c^t b} q_{\mathcal{L}^4}(c)=-\vartheta_{a,b}$$
because the exponent $c^t b$ is always odd.
\end{proof}
We will give another description of $\vartheta_{a,b}$ which will be useful later for their computation. This relies on the following duplication formula:
\begin{prop}\label{Mult2} The image of
$$[2]^*: \HH^0(A, \mathcal{L})\rightarrow \HH^0(A, \mathcal{L}^4)\cong V(4)$$
is a one-dimensional vector space generated by
$$\sum_{x\in 2 Z_4} e_x$$
\end{prop}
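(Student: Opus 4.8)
The plan is to realise the image of $[2]^*$ as the space of invariants of a level subgroup of $\tG{4}$ inside $\HH^0(A,\mathcal{L}^4)$, and then to pin that level subgroup down explicitly in $\mathcal{G}(\delta)$. Since $\mathcal{L}$ is symmetric one has $[2]^*\mathcal{L}\cong\mathcal{L}^4$, so $[2]^*$ maps $\HH^0(A,\mathcal{L})$ into $\HH^0(A,\mathcal{L}^4)$; this map is injective because $[2]$ is faithfully flat, and $\HH^0(A,\mathcal{L})$ is one-dimensional because $\mathcal{L}$ is a principal polarization. Hence the image is a line, and the task is to locate it in the basis $\{e_x\}_{x\in Z_4}$ of $V(4)$.

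For this I would invoke the descent theory of theta groups (see \cite[\S 1--2]{EqDefAV} or \cite[Ch.\ 8]{vdGMooAV}). By Lemma \ref{KMultn}, $K(\mathcal{L}^4)=A[4]$, and the isogeny $[2]\colon A\to A$ has kernel $A[2]\subseteq A[4]$. For $a\in A[2]$ the identity $[2]\circ t_a=[2]$ produces a canonical isomorphism $\varphi_a\colon\mathcal{L}^4=[2]^*\mathcal{L}\xrightarrow{\sim}t_a^*[2]^*\mathcal{L}=t_a^*\mathcal{L}^4$; the pairs $(a,\varphi_a)$ form a level subgroup $\widetilde{K}\subseteq\tG{4}$ lying over $A[2]$, and $(\mathcal{L}^4,\{\varphi_a\}_a)$ descends along $[2]$ to $\mathcal{L}$. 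Consequently $[2]^*\HH^0(A,\mathcal{L})$ is exactly $\HH^0(A,\mathcal{L}^4)^{\widetilde{K}}$, the subspace on which every $U_z$, $z\in\widetilde{K}$, acts as the identity.

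The heart of the argument is to show that $f_2$ carries $\widetilde{K}$ onto
$$N:=\{(1,x,\chi)\mid x\in 2Z_4,\ \chi\in 2Z_4^D\}\subseteq\mathcal{G}(\delta),\qquad\delta=(4,\dots,4),$$
where $2Z_4^D$ denotes the $2$-torsion of $Z_4^D$. One checks readily that $N$ is a subgroup (a $2$-torsion character factors through $Z_4/2Z_4$, hence kills $2Z_4$), that it is a level subgroup over the image $2Z_4\oplus 2Z_4^D$ of $A[2]$, and that it is $D_{-1}$-stable; since $f_2$ is symmetric, $f_2^{-1}(N)$ is $\delta_{-1}$-stable and is therefore the descent subgroup along $[2]$ of some symmetric degree-one line bundle $\mathcal{M}$ with $\mathcal{M}^2\cong\mathcal{L}_2$. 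To see $\mathcal{M}\cong\mathcal{L}$ I would compute $e_*^{\mathcal{M}}$ inside $\mathcal{G}(\delta)$: using the compatible symmetric theta structure of type $(2,\dots,2)$ on $\mathcal{L}_2$ induced by $f_2$, Proposition \ref{etae*} expresses $e_*^{\mathcal{M}}(x)$ as $\eta_2$ of an order-two lift of $x\in A[2]$, and a short calculation with the group law of $\mathcal{G}((2,\dots,2))$ evaluates this to $\chi(x)$, where $(x,\chi)$ is the image of $x$ under the induced isomorphism $A[2]\cong Z_2\oplus Z_2^D$. This is precisely the normal form $\left(\begin{smallmatrix}0&I_g\\0&0\end{smallmatrix}\right)$ singled out in Lemma \ref{Gamma48Can}; since a symmetric line bundle inducing the fixed polarization is determined by its associated quadratic form (both sets are torsors under $A[2]$ and $e_*$ identifies them), we get $\mathcal{M}\cong\mathcal{L}$, hence $f_2(\widetilde{K})=N$. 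Making this last computation rigorous — in particular checking the compatibilities that link $N\subseteq\mathcal{G}(\delta)$ to the relevant copy of $\eta_2$ — is the step I expect to be the main obstacle.

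Granting $f_2(\widetilde{K})=N$, the proof finishes by computing $V(4)^N$ from the action $(U_{r,x,\chi}f)(y)=r\,\chi(y)\,f(x+y)$ of Definition \ref{DefVdelta}. Taking $\chi=0$ with $x\in 2Z_4$ shows that an invariant $f$ is $2Z_4$-periodic, hence constant on the coset $2Z_4$; taking $x=0$ and letting $\chi$ range over $2Z_4^D$ shows that $f$ vanishes on $Z_4\setminus 2Z_4$, because $\{y\in Z_4\mid\chi(y)=1\text{ for all }\chi\in 2Z_4^D\}=2Z_4$. Hence $V(4)^N=k\cdot\sum_{x\in 2Z_4}e_x$. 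Under the theta-structure isomorphism $\HH^0(A,\mathcal{L}^4)\cong V(4)$ the subspace $\HH^0(A,\mathcal{L}^4)^{\widetilde{K}}$ goes to $V(4)^N$, so the image of $[2]^*$ is the line spanned by $\sum_{x\in 2Z_4}e_x$, as claimed.
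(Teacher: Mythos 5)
Your proposal is correct and follows essentially the same route as the paper: both identify the image of $[2]^*$ with the invariants of the descent level subgroup $\tilde K\subset\tG{4}$ over $A[2]$, pin down $f_2(\tilde K)=N=\{(1,x,\chi)\mid x\in 2Z_4,\ \chi\in 2Z_4^D\}$ by combining the compatible theta structure on $\mathcal{L}^2$ with Proposition \ref{etae*} and the normal form of $e_*^{\mathcal{L}}$ from Lemma \ref{Gamma48Can}, and finish with the same computation in $V(4)$. The one step you flag as the likely obstacle — making rigorous the link between $N$ and $\eta_2$ — is exactly where the paper's argument runs in the opposite direction and is cleaner: rather than starting from $N$, descending to an auxiliary symmetric bundle $\mathcal{M}$, computing $e_*^{\mathcal{M}}$ and invoking a uniqueness/torsor argument to get $\mathcal{M}\cong\mathcal{L}$, the paper first deduces $\varepsilon_2(\ker\eta_2)\subseteq\tilde K$ directly from the commutative diagram defining $\eta_2$ (Definition \ref{Defeta}), then uses the normal form of $e_*^{\mathcal{L}}$ together with Proposition \ref{etae*} to exhibit explicit order-two elements $z_a=f_1^{-1}(1,a,0)$ and $z_b=f_1^{-1}(1,0,\chi_b)$ lying in $\ker(\eta_2)$, whose images $\varepsilon_2(z_a)=f_2^{-1}(1,[2]a,0)$ and $\varepsilon_2(z_b)=f_2^{-1}(1,0,r^D(\chi_b))$ generate a subgroup of order $4^g=\sharp\tilde K$, forcing $\varepsilon_2(\ker\eta_2)=\tilde K=f_2^{-1}(N)$. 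If you want to keep your backward variant, you would need to justify both the correspondence between $\delta_{-1}$-stable level subgroups over $A[2]$ and symmetric descended bundles, and the identification $e_*^{\mathcal{M}}(P)=\chi(a)$; the forward argument sidesteps both.
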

\begin{proof}
This does not follow from Mumford's theta duplication formula \cite[p. 330]{EqDefAV} since $\mathcal{L}$ is not totally symmetric. The argument given below easily generalizes to an extension of Mumford's formula to all symmetric line bundles, but for sake of simplicity we shall confine ourselves to the special case of a symmetric line bundle of degree $1$. A related generalization of Mumford's duplication formula follows from results of Kempf \cite[\S 3, Theorem 8]{Kempf}.
\par Indeed, consider the isogeny $[2]: A \rightarrow A$. Choose an isomorphism $\psi: \mathcal{L}^4 \stackrel{\sim}{\rightarrow} [2]^* \mathcal{L}$. The properties of theta groups with respect to the isogeny $[2]^*$ give a level subgroup $\tilde{K}\subset \mathcal{G}(\mathcal{L}^4)$ above $A[2]$. Since $A[2]$ is a constant group scheme, it suffices to consider only $k$-valued points. The elements of $\tilde{K}(k)$ have the following description: For a point $x\in A[2](k)$ denote by $\phi_x: \mathcal{L}^4 \rightarrow t_x^*\mathcal{L}^4$ the composition $\mathcal{L}^4\stackrel{\psi}{\rightarrow} [2]^* \mathcal{L}=t_x^* ([2]^* \mathcal{L}) \stackrel{(t_x^*\psi)^{-1}}{\rightarrow} t_x^* \mathcal{L}^4$. Then clearly $\phi_x$ is independent of the choice of the isomorphism $[2]^*\mathcal{L} \cong \mathcal{L}^4$. Furthermore, $(x, \phi_x)\in \tG{4}(k)$ is the element in $\tilde{K}(k)$ above $x$. This description of $\tilde{K}$ implies that the line bundle $\mathcal{L}$ is defined over $k$ as claimed in Lemma \ref{Gamma48Can}. We have a commutative diagram
$$ \begin{tikzcd}
\mathcal{L}^{4} \arrow["\phi_x",r]\arrow["\psi",dd]& t_x^*\mathcal{L}^{4}\arrow["t_x^* \psi", d]\\
 & t_x^*(\left[2\right]^* \mathcal{L}) \arrow[equal]{d}\\
\left[2\right]^*\mathcal{L} \arrow{r}[swap]{\left[2 \right]^* \id_{\mathcal{L}}}& \left[ 2 \right]^* t_{0}^* \mathcal{L}
\end{tikzcd}$$
Thus by Definition \ref{Defeta} we see $\varepsilon_2 (\ker(\eta_2)) \subseteq \tilde{K}$ where
$$\mathcal{G}(\mathcal{L}^4)\stackrel{\varepsilon_2}{\leftarrow} \mathcal{G}(\mathcal{L}^2)\stackrel{\eta_2}{\rightarrow} \mathcal{G}(\mathcal{L})$$
\par We will now use the inclusion $\varepsilon_2 (\ker(\eta_2)) \subseteq \tilde{K}$ to construct certain elements of $\tilde{K}(k)$. This will in fact imply that the inclusion is an equality. Indeed, by \cite[p. 317]{EqDefAV} there is a unique $\vartheta$-structure on $\mathcal{L}^2$ such that $(f_1,f_2)$ are compatible. We obtain a commutative diagram
$$\begin{tikzcd}
\tG{4} \arrow["f_2",d] & \tG{2} \arrow["\eta_2",r]\arrow["\varepsilon_2", l]\arrow["f_1",d] & \tG{}\\
\mathcal{G}(4) & \mathcal{G}(2) \arrow["E_2",l]\\
\end{tikzcd}$$
Consider for $a\in Z_2$ the element $z_a=f_1^{-1}(1,a,0)\in \tG{2}(k)$. Then $z_a$ is an element of order $2$ and thus by Proposition \ref{etae*} we have $\eta_2(z_a)=e_*^\mathcal{L}(P_a)$ where $P_a$ is the image of $z_a$ in $K(\mathcal{L}^2)=A[2](k)$. Now since we assumed that $e_*^\mathcal{L}$ is in normal form, an easy calculation yields $e_*^\mathcal{L}(P_a)=1$. Therefore $z_a\in \ker(\eta_2)$. This gives the element $\varepsilon_2(z_a)= f_2^{-1}(1,[2] a,0)\in \tilde{K}(k)$. Similarly consider for $b\in Z_2$ the corresponding character $\chi_b: Z_2 \rightarrow k^\times,\, x\mapsto (-1)^{x^t b}$. Then $z_b=f_1^{-1}(1,0,\chi_b)\in \tG{2}(k)$ gives via the same argument the element $\varepsilon_2(z_b)= f_2^{-1}(1,0,r^D(\chi_b))\in \tilde{K}(k)$. The elements $\varepsilon_2(z_a),\, \varepsilon_2(z_b)$ generate a subgroup of $\tG{4}(k)$ isomorphic to $Z_2 \oplus Z_2$. Therefore $\varepsilon_2 (\ker(\eta_2)) = \tilde{K}$.
\par To compute now the image of
$$[2]^*: \HH^0(A, \mathcal{L})\rightarrow \HH^0(A, \mathcal{L}^4)\, ,$$
we can invoke Mumford's isogeny formula \cite[\S 1, Theorem 4]{EqDefAV}. The proposition follows.
\end{proof}
Choose a primitive 4th root of unity in some field extension $K$ of $k$. This determines an isomorphism $\mu_4 \cong \underline{\mathbb{Z}/4\mathbb{Z}}$ over $K$ and thus an isomorphism $\underline{Z_4}^D\cong \underline{Z_4}$. Therefore the theta structure $f_2$ induces a bijection of sets $f_{2,K}^{-1}:  K^\times \times Z_4 \times Z_4 \stackrel{\sim}{\rightarrow} \mathcal{G}(\mathcal{L}^4)(K)$. We have the following description of $\vartheta_{a,b}$: 
\begin{thm}\label{ThetaNullDescr}
For all $a,b\in \mathbb{Z}^g$
$$\ev_0( U_{f_{2,K}^{-1}(1,a,b)}  ([2]^* s) )$$
is in $k$ and is independent of the choice of a primitive 4th root of unity.
\par Furthermore, there is a $\lambda\in k^\times$ such that for all $a,b\in \mathbb{Z}^g$ we have
$$\vartheta_{a,b}=\lambda \cdot \ev_0( U_{f_{2,K}^{-1}(1,a,b)}  ([2]^* s) )$$
\end{thm}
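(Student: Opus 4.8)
The plan is to unwind both sides of the claimed identity using the coordinate description of $\HH^0(A,\mathcal{L}^4)$ furnished by the theta structure $f_2$ together with the duplication formula of Proposition \ref{Mult2}, and then to observe that the two resulting expressions agree up to a single nonzero scalar. First I would fix a $k$-linear isomorphism $\Phi\colon \HH^0(A,\mathcal{L}^4)\stackrel{\sim}{\rightarrow} V(4)$ intertwining the action of $\mathcal{G}(\mathcal{L}^4)$ (transported through $f_2$) with the action of $\mathcal{G}(4)$; such a $\Phi$ exists over $k$ since $\mathcal{G}(\mathcal{L}^4)$ has, up to isomorphism, a unique absolutely irreducible weight $1$ representation, and it is unique up to a factor in $k^\times$. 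By the very definition of the theta nullpoint, $q_{\mathcal{L}^4}$ is the coordinate vector of $\ev_0\circ\Phi^{-1}\colon V(4)\to k$, so $\ev_0(\Phi^{-1}(f))=\sum_{y\in Z_4}q_{\mathcal{L}^4}(y)\,f(y)$ for all $f\in V(4)$. Since $[2]\colon A\to A$ is finite and $s\neq 0$, the section $[2]^*s$ is a nonzero element of the one-dimensional image of $[2]^*$, so Proposition \ref{Mult2} gives $\Phi([2]^*s)=\lambda_0\sum_{x\in 2Z_4}e_x$; as both $\Phi$ and $\sum_{x\in 2Z_4}e_x$ are defined over $k$ we get $\lambda_0\in k^\times$, and changing the auxiliary isomorphism $\mathcal{L}^4\cong[2]^*\mathcal{L}$ used to define $[2]^*$ only rescales $\lambda_0$.

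Next I would carry out the bookkeeping with the chosen primitive fourth root of unity $\zeta\in K$. It identifies $\underline{Z_4}^D$ with $\underline{Z_4}$ by sending $b\in Z_4$ to the character $\chi_b\colon y\mapsto\zeta^{y^t b}$, so $f_{2,K}^{-1}(1,a,b)$ is the point of $\mathcal{G}(\mathcal{L}^4)(K)$ sent by $f_2$ to $(1,a,\chi_b)\in\mathcal{G}(4)(K)$. Applying the defining formula for the action on $V(4)$ to $\sum_{x\in 2Z_4}e_x$, and using that $a+y\in 2Z_4$ exactly when $y\equiv a\pmod 2$, yields $U_{(1,a,\chi_b)}\big(\sum_{x\in 2Z_4}e_x\big)=\sum_{y\equiv a\,(2)}\zeta^{y^t b}e_y$. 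Combining this with the previous step, the equivariance of $\Phi$, and the formula for $\ev_0\circ\Phi^{-1}$ gives $\ev_0\big(U_{f_{2,K}^{-1}(1,a,b)}([2]^*s)\big)=\lambda_0\sum_{y\equiv a\,(2)}\zeta^{y^t b}\,q_{\mathcal{L}^4}(y)$, whose right-hand side is precisely $\vartheta_{a,b}$ computed with the square root of $-1$ equal to $\zeta$. Setting $\lambda=\lambda_0^{-1}$ proves the second assertion, and the first is then immediate: $\vartheta_{a,b}$ lies in $k$ and is independent of the chosen square root of $-1$ by the lemma stated just before Proposition \ref{Mult2}, while $\lambda_0\in k^\times$; alternatively, independence of $\zeta$ is visible directly from the displayed sum via the substitution $y\mapsto-y$ together with Mumford's inversion formula $q_{\mathcal{L}^4}(-y)=q_{\mathcal{L}^4}(y)$ (already used above), which interchanges the sums computed with $\zeta$ and with $\zeta^{-1}$.

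The computations here are essentially routine; the only place demanding care is pinning down the identification of characters of $Z_4$ with elements of $Z_4$ through $\zeta$ and checking that it reproduces exactly the exponent $i^{c^t b}$ from the definition of $\vartheta_{a,b}$, together with the mild field-of-definition points guaranteeing that $\Phi$ and the scalar $\lambda_0$ are defined over $k$ rather than only over $\overline{k}$. I do not expect any further difficulty.
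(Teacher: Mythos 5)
Your proposal is correct and follows exactly the route the paper intends: the paper's own proof is the one-line remark that the theorem ``follows from Proposition \ref{Mult2} and Definition \ref{DefVdelta} via an elementary computation,'' and your argument is precisely that computation spelled out (identifying $[2]^*s$ with a scalar multiple of $\sum_{x\in 2Z_4}e_x$, applying the $\mathcal{G}(4)$-action formula, and pairing with the theta nullpoint). The field-of-definition and root-of-unity bookkeeping you add is sound and consistent with the lemma preceding Proposition \ref{Mult2}.
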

\begin{proof}
Follows from Proposition \ref{Mult2} and Definition \ref{DefVdelta} via an elementary computation.
\end{proof}
\begin{cor}\label{ThetaVan} For $a, b \in \mathbb{Z}^g$ denote by $P_{a,b}\in A[2](k)$ the 2-torsion point with coordinate vector $\begin{pmatrix}
a & b
\end{pmatrix}$ in the basis of $A[2](k)$ induced by $f_1$. Then $\vartheta_{a,b}=0$ if and only if $P_{a,b}\in \Theta$.
\end{cor}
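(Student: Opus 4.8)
The plan is to combine Theorem~\ref{ThetaNullDescr} with a fiberwise computation of the operator $U_{f_{2,K}^{-1}(1,a,b)}$ acting on $[2]^*s$, and then to pin down the resulting torsion point using the compatibility of $f_1$ and $f_2$.

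\emph{Step 1: reduce to a vanishing statement for $[2]^*s$.} By Theorem~\ref{ThetaNullDescr} there is a $\lambda\in k^\times$ with
$$\vartheta_{a,b}=\lambda\cdot\ev_0\bigl(U_{f_{2,K}^{-1}(1,a,b)}([2]^*s)\bigr),$$
so it suffices to decide when the evaluation on the right vanishes. Write $g=f_{2,K}^{-1}(1,a,b)=(x_0,\phi_0)\in\mathcal{G}(\mathcal{L}^4)(K)$; the point $x_0\in K(\mathcal{L}^4)=A[4]$ is already $k$-rational (only $\phi_0$ uses the chosen primitive fourth root of unity $\zeta$ needed to identify $\underline{Z_4}^D$ with $\underline{Z_4}$). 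Unwinding the definitions, $U_g([2]^*s)=t_{-x_0}^*\bigl(\phi_0([2]^*s)\bigr)$, so its value at $0$ is the value of $\phi_0([2]^*s)$ at $-x_0$; since $\phi_0$ is an isomorphism of line bundles it preserves vanishing, hence
$$\ev_0\bigl(U_g([2]^*s)\bigr)=0\iff [2]^*s\text{ vanishes at }-x_0.$$

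\emph{Step 2: rephrase in terms of $\Theta$.} Having fixed $\psi\colon\mathcal{L}^4\stackrel{\sim}{\rightarrow}[2]^*\mathcal{L}$ used to form $[2]^*s$, the zero divisor of $[2]^*s$ is $[2]^{-1}(\Theta)$, so $[2]^*s$ vanishes at $y$ exactly when $2y\in\Theta$. As $\mathcal{L}$ is symmetric and $\dim_k\HH^0(A,\mathcal{L})=1$, the section $s$ satisfies $[-1]^*s=c\,s$ with $c\in k^\times$, so $\Theta$ is a symmetric divisor and $-2x_0\in\Theta\iff 2x_0\in\Theta$. Combining with Step~1 gives $\vartheta_{a,b}=0\iff 2x_0\in\Theta$.

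\emph{Step 3: identify $2x_0$ with $P_{a,b}$.} On underlying points $\eta_2\colon\mathcal{G}(\mathcal{L}^4)\to\mathcal{G}(\mathcal{L}^2)$ induces multiplication by $2$, i.e.\ $[2]\colon A[4]=K(\mathcal{L}^4)\to A[2]=K(\mathcal{L}^2)$ (Definition~\ref{Defeta}). By the compatibility $f_1\circ\eta_2=H_2\circ f_2$ of the symmetric theta structures, this map, read through the identifications $A[4]\cong\underline{Z_4}\oplus\underline{Z_4}^D$ and $A[2]\cong\underline{Z_2}\oplus\underline{Z_2}^D$ induced by $f_2$ and $f_1$, is $(x,\chi)\mapsto(r(x),[2]^D\chi)$. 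Now $x_0$ corresponds to $(a\bmod 4,\chi_b)$ with $\chi_b(y)=\zeta^{y^tb}$, so $2x_0$ corresponds to $\bigl(a\bmod 2,[2]^D\chi_b\bigr)$, and $([2]^D\chi_b)(y)=\chi_b(2y)=(\zeta^2)^{y^tb}=(-1)^{y^tb}$, i.e.\ $[2]^D\chi_b$ is the character of $Z_2$ attached to $b\bmod 2$. Hence $2x_0$ has coordinate vector $(a,b)$ in the basis of $A[2](k)$ induced by $f_1$, that is $2x_0=P_{a,b}$. Substituting into Step~2 gives the corollary; everything is insensitive to the extension $k\subseteq K$ since $\vartheta_{a,b}\in k$ and $\Theta,\,A[2]$ are already defined over $k$.

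The main obstacle I anticipate is the bookkeeping in Step~3: one must correctly track which of $r,\,[2]$ and their Cartier duals $r^D,\,[2]^D$ acts on the $Z_\delta$-factor and which on the $\underline{Z_\delta}^D$-factor when passing through $\eta_2$ (as opposed to $\varepsilon_2$), and verify that with the paper's normalization $[2]^D\chi_b$ really is the character indexing $P_{a,b}$; a stray dualization or sign here would give a wrong answer. Compatibility with the complex-analytic picture of half-integer theta characteristics is a useful consistency check.
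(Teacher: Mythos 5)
Your proof is correct and follows essentially the same route as the paper: reduce via Theorem~\ref{ThetaNullDescr}, observe that $U_{f_{2,K}^{-1}(1,a,b)}$ translates the zero locus of $[2]^*s=[2]^{-1}(\Theta)$ by the underlying $4$-torsion point, and identify twice that point with $P_{a,b}$ (your Step~3 actually spells out, via $f_1\circ\eta_2=H_2\circ f_2$, an identification the paper only asserts). The lone inaccuracy is the parenthetical claim that $x_0$ is $k$-rational --- its $\mu_4$-component involves $\zeta$, so in general it is only a $K$-point --- but this is harmless since $2x_0\in A[2](k)$ and your closing descent remark is the correct justification.
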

\begin{proof}
By definition $\Theta=\mathcal{V}(s)=\{x\in A| s_{|x}=0\}$. Then $\mathcal{V}([2]^*(s))=[2]^{-1}(\Theta)$. On the other hand let us denote by $P'_{a,b}\in K(\mathcal{L}^4)(K)=A[4](K)$ the image of $f_{2,K }^{-1}(1,a,b)$. Then we have $\mathcal{V}(U_{f_{2,K}^{-1}(1,a,b)}  ([2]^* s))=\mathcal{V}(t_{P'_{a,b}}^* ([2]^* s))$ because the divisor of vanishing does not change after applying an isomorphism of line bundles. Thus 
$$\mathcal{V}(U_{f_{2,K}^{-1}(1,a,b)}  ([2]^* s))=t_{P'_{a,b}}^{-1}( [2]^{-1}( \Theta))=[2]^{-1}(t_{2P'_{a,b}}(\Theta)))=[2]^{-1}(t_{P_{a,b}}^{-1}(\Theta)))$$
Now $\vartheta_{a,b}=0$ iff $\ev_0( U_{f_{2,K}^{-1}(1,a,b)}  ([2]^* s) )=0$ iff $0\in \mathcal{V}(U_{f_{2,K}^{-1}(1,a,b)}  ([2]^* s))$ iff $P_{a,b}=0$. This proves the corollary.
\end{proof}
\begin{rem}
Notice that in all the results in this section it was important that the quadratic form $e_*^\mathcal{L}$ was put into normal form. Otherwise one should replace the expression $c^t b$ in the definition of $\vartheta_{a,b}$ suitably.
\end{rem}
\par As we will see below, some formulae, e.g. the formulae for hyperelliptic curves, will only depend on the squares $\vartheta_{a,b}^2$. But to compute these squares we do not necessarily need to know the theta nullvalues $q_{\mathcal{L}^4}$. In fact it suffices to know $q_{\mathcal{L}^2}$. This follows from the following lemma:
\begin{lem}
After suitably normalizing the isomorphisms $\HH^0(A, \mathcal{L}^2)\cong V(2),\, \HH^0(A, \mathcal{L}^4)\cong V(4)$ we have for all $a,b \in \ZZ^g$:
$$\vartheta_{a,b}^2=\sum_{x\in Z_2} (-1)^{x^t b} q_{\mathcal{L}^2}(x)q_{\mathcal{L}^2}(x+a)$$
\end{lem}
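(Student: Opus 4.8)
The identity is the algebraic shadow of the classical duplication (Landen) formula for theta nullvalues, and the plan is to obtain it by squaring the formula of Theorem~\ref{ThetaNullDescr}. Write $g_{a,b}=f_{2,K}^{-1}(1,a,b)\in\tG{4}(K)$, so that $\vartheta_{a,b}=\lambda\,\ev_0\bigl(U_{g_{a,b}}([2]^*s)\bigr)$ for a fixed $\lambda\in k^\times$, where $s$ is a nonzero section of $\mathcal{L}$. Choosing the trivialization of $\mathcal{L}^8_{|0}$ compatibly with that of $\mathcal{L}_{|0}$, the maps $U$, $\ev_0$ and $[2]^*$ are all compatible with the multiplication $\HH^0(\mathcal{L}^4)\otimes\HH^0(\mathcal{L}^4)\to\HH^0(\mathcal{L}^8)$; in particular $U_h(\sigma)\cdot U_h(\tau)=U_{\varepsilon_2(h)}(\sigma\tau)$ for $\sigma,\tau\in\HH^0(\mathcal{L}^4)$ and $h\in\tG{4}$, where $\varepsilon_2\colon\tG{4}\to\tG{8}$. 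Hence
$$\vartheta_{a,b}^2=\lambda^2\,\ev_0\Bigl(U_{\varepsilon_2(g_{a,b})}\bigl([2]^*(s^2)\bigr)\Bigr),$$
where now $s^2\in\HH^0(\mathcal{L}^2)$ is the square of the theta section.

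The next step is to push this down from $\mathcal{L}^8$ to $\mathcal{L}^2$ via Mumford's theory of theta groups applied to the isogeny $[2]\colon A\to A$, along which $[2]^*\mathcal{L}^2\cong\mathcal{L}^8$. Fixing such an isomorphism, $[2]^*$ identifies $\HH^0(\mathcal{L}^2)$ with the invariants in $\HH^0(\mathcal{L}^8)$ under the canonical level subgroup $\widetilde H$ above $A[2]=\ker[2]$, compatibly with the action of its centralizer $Z_{\tG{8}}(\widetilde H)$, which surjects onto $\tG{2}$. The commutator pairing $e^{\mathcal{L}^8}$ restricted to $A[4]\times A[2]$ is trivial — it is the square of $e^{\mathcal{L}^4}|_{A[4]\times A[2]}$, and the latter is $\{\pm1\}$-valued because $A[2]=2\,A[4]$ — so $\varepsilon_2(g_{a,b})$, which lies above a point of $A[4]$, centralizes $\widetilde H$; by the very construction of Mumford's $\eta_2\colon\tG{4}=\mathcal{G}\bigl((\mathcal{L}^2)^2\bigr)\to\tG{2}$ for the totally symmetric bundle $\mathcal{L}^2$, the image of $\varepsilon_2(g_{a,b})$ in $\tG{2}$ is $\eta_2(g_{a,b})$. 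Thus $\vartheta_{a,b}^2=\mu\,\lambda^2\,\ev_0\bigl(U_{\eta_2(g_{a,b})}(s^2)\bigr)$ with $\mu\in k^\times$ depending only on the chosen isomorphism $[2]^*\mathcal{L}^2\cong\mathcal{L}^8$.

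Now the compatibility of the pair $(f_1,f_2)$ includes $f_1\circ\eta_2=H_2\circ f_2$, so $\eta_2(g_{a,b})=f_1^{-1}\bigl(H_2(1,a,b)\bigr)=f_1^{-1}\bigl(1,r(a),\chi_{r(b)}\bigr)$, with $r\colon Z_4\to Z_2$ the reduction and $\chi_{r(b)}(y)=(-1)^{y^t b}$. In the model $V(2)=k^{Z_2}$, and using that the coordinate vector of $\ev_0$ is $q_{\mathcal{L}^2}$, this becomes
$$\vartheta_{a,b}^2=\mu\,\lambda^2\sum_{y\in Z_2}(-1)^{y^t b}\,q_{\mathcal{L}^2}(y)\,w(y+a),$$
where $w\in k^{Z_2}$ is the coordinate vector of $s^2$. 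The last ingredient is the duplication identity $w=\nu\,q_{\mathcal{L}^2}$ for some $\nu\in k^\times$ — the statement that the expansion coefficients of the square of the theta section in the natural basis of $\HH^0(\mathcal{L}^2)$ are exactly the theta nullvalues of $\mathcal{L}^2$, the algebraic form of the classical theta duplication identity specialized at the origin. Substituting it gives $\vartheta_{a,b}^2=\mu\nu\lambda^2\sum_{y\in Z_2}(-1)^{y^t b}q_{\mathcal{L}^2}(y)q_{\mathcal{L}^2}(y+a)$, and since $q_{\mathcal{L}^2}$ and $q_{\mathcal{L}^4}$ (hence $\vartheta_{a,b}$) are in any case defined only up to scalars, the isomorphisms $\HH^0(\mathcal{L}^2)\cong V(2)$ and $\HH^0(\mathcal{L}^4)\cong V(4)$ can be rescaled to make the constant equal to $1$.

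I expect the real obstacle to be this duplication identity $w=\nu\,q_{\mathcal{L}^2}$: since $\mathcal{L}$ is symmetric but not totally symmetric, it is not literally a case of Mumford's multiplication formula, so — exactly as in the proof of Proposition~\ref{Mult2} — the relevant piece must be redone by hand, e.g.\ by computing $[2]^*(s^2)=([2]^*s)^2$ inside $\HH^0(\mathcal{L}^8)$ from the explicit vector $\sum_{x\in 2Z_4}e_x$ of Proposition~\ref{Mult2} and matching it against the embedding $\HH^0(\mathcal{L}^2)\hookrightarrow\HH^0(\mathcal{L}^8)$, or via the self-duality of the theta representation. A secondary nuisance is the bookkeeping of the normalization scalars $\lambda,\mu,\nu$ and verifying that they collapse into a single constant that can be normalized away. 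An alternative that avoids Theorem~\ref{ThetaNullDescr} altogether is to invoke Mumford's multiplication formula directly for $\mathcal{L}^4=(\mathcal{L}^2)^2$ — legitimate because $\mathcal{L}^2$ is totally symmetric — combine it with the definition of $\vartheta_{a,b}$ as a partial Fourier transform of $q_{\mathcal{L}^4}$, and carry out the resulting finite Fourier sum; the care needed there is to reconcile the Fourier transform tailored to $\mathcal{L}$ (the exponent $c^t b$ in the definition of $\vartheta_{a,b}$, cf.\ the Remark after Corollary~\ref{ThetaVan}) with the transform natural for $\mathcal{L}^2$.
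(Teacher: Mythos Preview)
Your ``alternative'' at the very end is exactly what the paper does, and it is a five--line proof: since $\mathcal{L}^2$ is totally symmetric, Mumford's addition formula \cite[formula (A), p.~332]{EqDefAV} gives
\[
q_{\mathcal{L}^2}(x)\,q_{\mathcal{L}^2}(y)=\sum_{\substack{u,v\in Z_4\\ u+v=[2]x,\;u-v=[2]y}} q_{\mathcal{L}^4}(u)\,q_{\mathcal{L}^4}(v),
\]
and plugging this into $\sum_{x\in Z_2}(-1)^{x^t b}q_{\mathcal{L}^2}(x)q_{\mathcal{L}^2}(x+a)$, rewriting $(-1)^{x^t b}=i^{(u+v)^t b}$ and observing that the summation conditions collapse to $u\equiv v\equiv a\pmod 2$, one gets $\vartheta_{a,b}^2$ on the nose. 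No bookkeeping of scalars beyond ``normalize the two isomorphisms suitably'' is needed.

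Your main route through $\mathcal{G}(\mathcal{L}^8)$ and descent along $[2]$ is correct in outline (in particular your identification of the image of $\varepsilon_2(g_{a,b})$ with $\eta_2(g_{a,b})$ is right, and the pair $(f_1,f_2)$ in this paper is indeed for $(\mathcal{L}^2,\mathcal{L}^4)$, so the compatibility $f_1\circ\eta_2=H_2\circ f_2$ applies as you use it). But the step you flag as the ``real obstacle'', the duplication identity $w=\nu\,q_{\mathcal{L}^2}$ for the coordinate vector of $s^2$, is \emph{precisely} the addition formula specialized to the origin: analytically it is the identity $\vartheta(z)^2=\sum_a\theta_a(0)\,\theta_a(z)$, and algebraically it comes out of the same formula (A) applied to the isogeny $(x,y)\mapsto(x+y,x-y)$. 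So your longer argument does not avoid the key input, it merely postpones it; once you invoke the addition formula to prove $w=\nu\,q_{\mathcal{L}^2}$ you might as well have used it directly, as the paper does, and saved the excursion through $\mathcal{L}^8$.
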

\begin{proof}
Since $\mathcal{L}^2$ is totally symmetric, we can apply Mumford's addition formula \cite[formula (A) on p. 332]{EqDefAV} to $\mathcal{L}^2$. Thus for all $x, y\in Z_2$
$$q_{\mathcal{L}^2}(x)q_{\mathcal{L}^2}(y)=\sum_{\substack{u,v\in Z_4\\ u+v=[2]x\\ u-v=[2]y}} q_{\mathcal{L}^4}(u) q_{\mathcal{L}^4}(v)$$
Notice that the formula looks slightly different than Mumford's. He has a different notation for the embedding $[2]: Z_2 \hookrightarrow Z_4$. Choose, as before, a square root of $-1$ in some field extension denoted $i$. Then
$$\sum_{x\in Z_2} (-1)^{x^t b} q_{\mathcal{L}^2}(x)q_{\mathcal{L}^2}(x+a)= \sum_{x\in Z_2} \sum_{\substack{u,v\in Z_4\\ u+v=[2]x\\ u-v=[2](x+a)}} (-1)^{x^t b} q_{\mathcal{L}^4}(u) q_{\mathcal{L}^4}(v)=$$
$$\sum_{x\in Z_2} \sum_{\substack{u,v\in Z_4\\ u+v=[2]x\\ u-v=[2](x+a)}} i^{(u+v)^t b} q_{\mathcal{L}^4}(u) q_{\mathcal{L}^4}(v)=\sum_{\substack{u,v\in Z_4\\ u\equiv v \equiv a (2)}} i^{(u+v)^t b} q_{\mathcal{L}^4}(u) q_{\mathcal{L}^4}(v)=\vartheta_{a,b}^2$$
\end{proof}
\subsection{Complex analytic comparison theorem}
Over the field $\mathbb{C}$ Mumford's algebraic theta nullvalues agree with the analytic theta nullvalues defined via Riemann theta functions with rational characteristics. Recall their definition:
\begin{definition} Denote by $\mathfrak{H}_g$ the Siegel upper half space. For $a,b\in \QQ^g$ we define the theta function with rational characteristics
$$\thetacha{a}{b}: \mathbb{C}^g\times \mathfrak{H}_g\rightarrow \mathbb{C} $$
$$\thetacha{a}{b}  (z, \Omega)= \sum_{n\in {\ZZ}^g} \exp\left( \pi i (n+a)^t \Omega (n+a) +2 \pi i (n+a)^t(z+b)\right) $$
\end{definition}
The following theorem gives the connection between algebraic and analytic theta nullvalues. Strictly speaking it is not used in this paper. However, we believe that recalling it will help to clarify the general picture.
\begin{thm} \label{ComplxCo}
Let $(A,  \lambda, f_2)$ be a principally polarized abelian variety over $\CC$ with a $\Gamma(4,8)$-level structure. Then there is a symplectic basis of $H_1(A, \ZZ)$ unique up to $\Gamma(4,8)$ such that the associated period matrix $\Omega$ satisfies: There is an $\lambda\in \CC^\times$ such that for all $a,b \in \mathbb{Z}^g$
$$\vartheta_{a,b}=\lambda \cdot  \thetacha{a/2}{b/2}(0, \Omega)$$
\end{thm}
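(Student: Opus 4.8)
The plan is to reduce the statement to Mumford's analytic--algebraic comparison of theta structures and then finish with an elementary Fourier identity for Riemann theta functions. I would start by picking any symplectic basis of $H_1(A,\mathbb{Z})$ and writing $A=\mathbb{C}^g/(\mathbb{Z}^g+\Omega\mathbb{Z}^g)$ for some $\Omega\in\mathfrak{H}_g$; two such choices differ by the natural action of $\mathrm{Sp}_{2g}(\mathbb{Z})$ on $\mathfrak{H}_g$. Let $L_0$ be the symmetric ample line bundle of degree $1$ whose space of global sections is spanned by $\thetacha{0}{0}(z,\Omega)$. Its automorphy factor equips $\HH^0(A,L_0^{\otimes 4})$ with the basis $\theta_c(z)=\thetacha{c/4}{0}(4z,4\Omega)$, $c\in Z_4$, and the splitting $\mathbb{Z}^g+\Omega\mathbb{Z}^g=\mathbb{Z}^g\oplus\Omega\mathbb{Z}^g$ together with the half-integer characteristics equips $\mathcal{G}(L_0^{\otimes 4})$ with a canonical symmetric theta structure $f_2^{\mathrm{an}}$ of type $(4,\dots,4)$ carrying $(\theta_c)_c$ to the standard basis of $V(4)$. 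Mumford's analytic comparison theorem (see \cite{EqDefAV}) then states that, computed with respect to $f_2^{\mathrm{an}}$, the algebraic theta nullpoint of $L_0^{\otimes 4}$ is $q_{L_0^{\otimes 4}}(c)=\lambda_0\cdot\thetacha{c/4}{0}(0,4\Omega)$ for a single scalar $\lambda_0\in\mathbb{C}^\times$ independent of $c$.

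The second step is to identify the abstract datum $(A,\lambda,f_2)$ with this analytic picture. Using Proposition \ref{QFProp} iv) one computes $e_*^{L_0}$ on $A[2]$ from the parities of the half-integer characteristics and checks that, in the basis of $A[2]$ given by the half-periods, it is already in the normal form required in Lemma \ref{Gamma48Can}. By the uniqueness in that lemma it is therefore enough to choose the symplectic basis so that $f_2^{\mathrm{an}}=f_2$; then automatically $\mathcal{L}=L_0$. For this one invokes the classical fact that $\mathrm{Sp}_{2g}(\mathbb{Z})$ acts on the set of symmetric theta structures for $\mathcal{L}_2^2$ through a finite quotient, that this action is transitive (so every symmetric theta structure occurring here is analytic), and that the stabilizer of one such structure is exactly $\Gamma(4,8)$ --- this is the content of the theory of the congruence groups $\Gamma(n,2n)$ and is what underlies the isomorphism $\mathfrak{H}_g/\Gamma(4,8)\cong\{\text{p.p.a.v.\ with }\Gamma(4,8)\text{-level}\}$ recalled in the introduction. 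This yields a symplectic basis, unique up to $\Gamma(4,8)$, with $q_{\mathcal{L}^4}(c)=\lambda_0\cdot\thetacha{c/4}{0}(0,4\Omega)$ for all $c\in Z_4$.

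Finally I would perform the Fourier transform. Splitting the lattice sum defining $\thetacha{a/2}{b/2}(0,\Omega)$ according to the class of $2n+a$ modulo $4$ gives the elementary identity
$$\thetacha{a/2}{b/2}(0,\Omega)=\sum_{c\in Z_4,\, c \equiv a (2)} i^{c^t b}\,\thetacha{c/4}{0}(0,4\Omega).$$
Comparing with the definition $\vartheta_{a,b}=\sum_{c\equiv a (2)} i^{c^t b}\,q_{\mathcal{L}^4}(c)$ and inserting $q_{\mathcal{L}^4}(c)=\lambda_0\cdot\thetacha{c/4}{0}(0,4\Omega)$ gives $\vartheta_{a,b}=\lambda_0\cdot\thetacha{a/2}{b/2}(0,\Omega)$, so the theorem holds with $\lambda=\lambda_0$, which is indeed independent of $a$ and $b$.

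I expect the second step to be the main obstacle: verifying that $f_2^{\mathrm{an}}$ is symmetric and of the correct type, that every symmetric theta structure arising here is analytic, and --- most delicately --- that the residual ambiguity in the symplectic basis is precisely $\Gamma(4,8)$ rather than $\Gamma(4)$ or $\Gamma(8)$. This requires carefully tracking the weight-$\frac12$ automorphy factor and the parity conditions in the theta transformation formula. The remaining ingredients --- the description of $\HH^0(L_0^{\otimes 4})$, Mumford's comparison in standard coordinates, and the Fourier identity --- are quotable or a direct computation.
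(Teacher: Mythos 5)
Your proposal is correct and follows the standard route: the paper's own proof is a one-line citation to \cite[Proposition 5.11]{TataIII}, and your argument (canonical analytic theta structure on $L_0^{\otimes 4}$, transitivity of the $\mathrm{Sp}_{2g}(\mathbb{Z})$-action on symmetric theta structures with stabilizer exactly $\Gamma(4,8)$, and the lattice-splitting identity $\thetacha{a/2}{b/2}(0,\Omega)=\sum_{c\equiv a(2)} i^{c^t b}\thetacha{c/4}{0}(0,4\Omega)$, which does check out against the paper's definition of $\vartheta_{a,b}$) is precisely what that citation packages. The one point you rightly flag as delicate --- that every symmetric theta structure of type $(4,\dots,4)$ is analytic and that the residual ambiguity in the symplectic basis is $\Gamma(4,8)$ rather than some other congruence group --- is the classical Igusa/Mumford fact that the paper itself also quotes without proof in its introduction.
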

\begin{proof}
Follows from \cite[Proposition 5.11]{TataIII}.
\end{proof}
\section{Finite commutative group schemes}\label{Chapfcgrschem}
\subsection{Truncated Witt group schemes}
Let $k$ be a field of characteristic $p>0$. Recall that the ring of ($p$-typical) Witt vectors of a ring $R$ is $W(R)=R^\mathbb{N}$ with addition and multiplication given by Witt polynomials. Because the ring structure is given by polynomials the functor $k-\text{algebras}\rightarrow \text{Rings},\, R\mapsto W(R)$ is represented by an affine ring scheme over $k$ with underlying scheme $\Spec(k[x_0, x_1, \ldots])$. We denote this ring scheme by $W$. Similarly the truncated Witt rings define a ring scheme denoted $W_n$
\begin{definition}
Let $m,n$ be positive integers. We define $W_{m,n}=\ker(F^m: W_n \rightarrow W_n)$ where $F: (a_0, \ldots, a_{n-1})\mapsto (a_0^p, \ldots, a_{n-1}^p)$ is the relative Frobenius map.
\end{definition}
Because $F$ is an additive homomorphism, $W_{m,n}$ is a group scheme. It is finite and connected with Hopf algebra $k[x_0, \ldots, x_{n-1}]/(x_0^{p^m}, \ldots x_{n-1}^{p^m})$.
\subsection{Duality of truncated Witt group schemes}
In this section we recall that the Cartier dual of $W_{m,n}$ is $W_{n,m}$ where the duality is given by the Artin-Hasse exponential.
\begin{definition}
The Artin-Hasse exponential series is defined to be the series
$$\exp_\text{AH}(t)=\exp\left(-\sum_{i=0}^\infty \frac{t^{p^i}}{p^i} \right) \in \mathbb{Q}\llbracket t \rrbracket$$
\end{definition}
By \cite[p. 52]{Demazure} one has $\exp_\text{AH}\in \mathbb{Z}_{(p)}\llbracket t \rrbracket$, i.e. the denominators of the coefficients are not divisible $p$.
\begin{definition}
Let $R$ be $k$-algebra. Consider
$$\mathbb{W}(R)=\{(a_0, a_1,\ldots)\in W(A)| a_i=0 \text{ for all but finitely many } i\}\,.$$
We define a map
$$E_\text{AH}: \mathbb{W}(R)\rightarrow R$$
$$(a_0, a_1, \ldots ) \mapsto \prod_{i=0}^\infty \exp_\text{AH}(a_i)$$
\end{definition}
It is clear that the product in the definition above is actually finite and thus $E_\text{AH}$ is well-defined.
\par We define the map of schemes $\sigma_n: W_n \rightarrow W, \, (a_0, \ldots, a_{n-1})\mapsto (a_0, \ldots, a_{n-1}, 0, \ldots)$. N. B. $\sigma_n$ is not a morphism of group schemes. The duality theory of truncated Witt group schemes is given by the following theorem:
\begin{thm}\label{WittDual} For $x\in W_{m,n}(R), y\in W_{n,m}(R)$ define
$$\langle x,y \rangle=E_\text{AH}(\sigma_n(x) \cdot_W \sigma_m(y))$$
Then $\langle \cdot , \cdot \rangle: W_{m,n}\times W_{n,m}\rightarrow \mathbb{G}_m$ is biadditive and induces an isomorphism
$$W_{m,n}\cong (W_{n,m})^D$$
Furthermore,
$$\langle F(x), y\rangle= \langle x, t(y)\rangle$$
where $t: W_{m,n}\rightarrow W_{m,n},\, (x_0, \ldots, x_{n-1})\mapsto (0, x_0, \ldots, x_{n-2})$ is induced by Verschiebung on the Witt ring.
\end{thm}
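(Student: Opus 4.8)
The plan is to reduce everything to the classical statement that the Artin--Hasse exponential sets up the autoduality of the Witt group scheme $W$ over $\ZZ_{(p)}$ (or rather its relevant finite-length quotients), and then track how the kernels $W_{m,n} = \ker(F^m : W_n \to W_n)$ sit inside this picture. First I would recall that the pairing $W \times W \to \widehat{\mathbb{G}}_a$-type constructions, or more precisely the pairing built from $E_\text{AH}$ on the free-module subfunctor $\mathbb{W}$, is biadditive; biadditivity of $\langle x, y\rangle = E_\text{AH}(\sigma_n(x) \cdot_W \sigma_m(y))$ then follows from the formal identity $E_\text{AH}(\underline{v} +_W \underline{w}) = E_\text{AH}(\underline{v}) E_\text{AH}(\underline{w})$ together with distributivity of $\cdot_W$ over $+_W$. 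One must check that $\sigma_n$ and $\sigma_m$, although not group homomorphisms, are \emph{additive} modulo the relevant truncation when restricted to $W_{m,n}$ and $W_{n,m}$ respectively; this is where the length restrictions $F^m x = 0$ and $F^n y = 0$ enter and make the carries vanish in the product $\sigma_n(x) \cdot_W \sigma_m(y)$ up to terms killed by $E_\text{AH}$ on the nose. This is the standard "Artin--Hasse duality for truncated Witt vectors" and I would cite Demazure--Gabriel or \cite{Demazure} for the underlying computation, reproducing only the bookkeeping specific to the bi-indices $(m,n)$ versus $(n,m)$.

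Next I would establish that the induced map $W_{m,n} \to (W_{n,m})^D$ is an isomorphism. Since both sides are finite group schemes over $k$ with the same order --- $W_{m,n}$ has Hopf algebra $k[x_0,\dots,x_{n-1}]/(x_0^{p^m},\dots,x_{n-1}^{p^m})$, of rank $p^{mn}$, and likewise $W_{n,m}$ has rank $p^{nm}$ --- it suffices to show the pairing is nondegenerate, i.e. that the map $W_{m,n} \to (W_{n,m})^D$ is a closed immersion, or equivalently that no nonzero $R$-point of $W_{m,n}$ pairs trivially with all of $W_{n,m}$. I would argue this by a filtration/induction on $n$ (or $m$): the Verschiebung-type operator $t$ and Frobenius $F$ give short exact sequences $0 \to W_{m,1} \to W_{m,n} \to W_{m,n-1} \to 0$, and dually $0 \to W_{n-1,m} \to W_{n,m} \to W_{1,m} \to 0$, and one checks the pairing is compatible with these, reducing to the base case $W_{m,1} = \ker(F^m : \mathbb{G}_a \to \mathbb{G}_a) = \alpha_{p^m}$ paired against $W_{1,m}$ by (a truncation of) $E_\text{AH}$, which is the well-known self-duality computation for $\alpha_{p^m}$ via the Artin--Hasse exponential.

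Finally, the adjunction formula $\langle F(x), y\rangle = \langle x, t(y)\rangle$ I would prove by direct manipulation of the defining expression: $\langle F(x), y\rangle = E_\text{AH}(\sigma_n(F x) \cdot_W \sigma_m(y))$, and one uses that $\sigma_n \circ F = F \circ \sigma_n$ (Frobenius on $W_n$ is the restriction of Frobenius on $W$) together with the fact that on $W$ one has $E_\text{AH}(F(\underline{v}) \cdot_W \underline{w}) = E_\text{AH}(\underline{v} \cdot_W V(\underline{w}))$ --- the Frobenius--Verschiebung adjunction for the Artin--Hasse pairing on $W$ --- and then observes that under $\sigma_m$ the Verschiebung $V$ on $W$ corresponds precisely to the shift operator $t$ on $W_{m,n}$. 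The main obstacle I anticipate is purely organizational rather than conceptual: keeping straight the two different truncations (the $F^m$-torsion condition and the length-$n$ truncation) and verifying that $\sigma_n, \sigma_m$ interact with $\cdot_W$ so that the product lands in the part of $W$ on which $E_\text{AH}$ is well-defined and multiplicative --- i.e. that all the "overflow" carries beyond the relevant length are annihilated. Everything else is an invocation of the classical Dieudonné-theoretic duality of Witt group schemes, for which I would simply point to the literature.
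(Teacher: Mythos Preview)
The paper does not give a proof at all: its entire argument is the single line ``See \cite[p.~61]{Demazure}.'' Your proposal, by contrast, sketches the actual content of that reference --- multiplicativity of $E_{\mathrm{AH}}$ with respect to Witt addition for biadditivity, a rank count plus a d\'evissage along the $F$/$V$ filtrations for nondegeneracy, and the Frobenius--Verschiebung adjunction on $W$ for the final formula --- and this is indeed the standard route taken in Demazure's notes. So your approach is correct and is not so much different from the paper's as it is a fleshing-out of the citation the paper defers to.

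One small caution: the step where you say that $\sigma_n$ is ``additive modulo the relevant truncation when restricted to $W_{m,n}$'' is the genuinely delicate point, and your phrasing slightly undersells it. The carries produced by $\sigma_n(x_1)+_W\sigma_n(x_2)-\sigma_n(x_1+_{W_n}x_2)$ live in ghost components of index $\geq n$, and one must check that after multiplying by $\sigma_m(y)$ with $y\in W_{n,m}$ these contribute trivially to $E_{\mathrm{AH}}$; this is exactly where both nilpotence conditions $x_i^{p^m}=0$ and $y_j^{p^n}=0$ are simultaneously used, and Demazure handles it via ghost coordinates rather than by a direct ``truncation'' argument. If you intend to write this out rather than cite it, that computation deserves more than a parenthetical.
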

\begin{proof}
See \cite[p. 61]{Demazure}.
\end{proof}
\begin{cor} The Verschiebung map $V: W_{m,n}\rightarrow W_{m,n}$ is given by $t$.
\end{cor}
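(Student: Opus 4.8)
The corollary should follow at once from Theorem \ref{WittDual} combined with the general principle that Cartier duality interchanges Frobenius and Verschiebung. Here is the plan.

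First I would reinterpret the last displayed identity of Theorem \ref{WittDual}, namely $\langle F(x), y\rangle = \langle x, t(y)\rangle$, as a statement about adjoint maps for the perfect pairing $\langle\,\cdot\,,\,\cdot\,\rangle\colon W_{m,n}\times W_{n,m}\to \mathbb{G}_m$. Concretely: under the induced identification $W_{n,m}\cong (W_{m,n})^D$, the shift endomorphism $t$ of $W_{n,m}$ is the Cartier dual of the Frobenius endomorphism of $W_{m,n}$. I use here that $W_{m,n}$ is defined over $\mathbb{F}_p$, so that its relative Frobenius $W_{m,n}\to W_{m,n}^{(p)}$ is canonically an endomorphism of $W_{m,n}$, given on coordinates by $(x_0,\dots,x_{n-1})\mapsto (x_0^p,\dots,x_{n-1}^p)$, which is the map denoted $F$ in the theorem.

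Next I would invoke the standard fact (see e.g. \cite{Demazure}), which for finite commutative group schemes may be taken as the very definition of $V$, that $(F_G)^D = V_{G^D}$ for any such $G$, using the canonical identification $(G^{(p)})^D\cong (G^D)^{(p)}$. Taking $G = W_{m,n}$ and using $(W_{m,n})^D\cong W_{n,m}$ from Theorem \ref{WittDual}, this gives $(F_{W_{m,n}})^D = V_{W_{n,m}}$. Together with the first step we obtain $V_{W_{n,m}} = t$ as endomorphisms of $W_{n,m}$. Since the whole construction is symmetric in $m$ and $n$, swapping the two indices yields $V_{W_{m,n}} = t$, which is exactly the claim.

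The only thing that needs attention is index bookkeeping: one must keep track of which of the group schemes $W_{m,n}$, $W_{n,m}$ carries each of the maps $F$, $t$, $V$, and check that the identification $W_{m,n}^{(p)}\cong W_{m,n}$ used to regard Frobenius as an endomorphism is the canonical one coming from the $\mathbb{F}_p$-structure, so that the abstract equality $(F_G)^D = V_{G^D}$ specializes to the explicit shift formula for $t$. No genuine computation is required beyond what is already contained in Theorem \ref{WittDual}.
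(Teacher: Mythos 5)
Your proposal is correct and matches the paper's argument: the paper likewise deduces the corollary from the adjunction formula $\langle F(x),y\rangle=\langle x,t(y)\rangle$ of Theorem \ref{WittDual} together with the fact that $V$ is by definition the Cartier dual of $F$ on the dual group scheme. Your extra care with the $m\leftrightarrow n$ bookkeeping is welcome (the theorem's statement has the indices of $t$ slightly off), but the route is the same.
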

\begin{proof}
Follows from the theorem because $V: W_{m,n}\rightarrow W_{m,n}$ is defined to be the Cartier dual of $F: W_{m,n}^D\rightarrow W_{m,n}^D$ (see \cite[p. 28]{Demazure}).
\end{proof}
\subsection{Dieudonn\'e theory}
We will collect here the results from Dieudonn\'e theory we shall need. We will make use of covariant Dieudonn\'e theory. The reason for this choice is that we want to work with surjections from truncated Witt group schemes and thus using the covariant theory is more natural. Let $k$ be a perfect field of characterstic $p>0$. Recall that the Dieudonn\'e ring $D(k)$ is defined to be the non-commutative ring $W(k)[F,V]$ with the relations $Fa=\sigma(a) F, V \sigma(a)=a V,\, \forall a\in W(k),\, FV=VF=p$. We also define $D_{m,n}=\frac{D(k)}{D(k) V^m+D(k) F^n}$. Since $D(k) V^m+D(k) F^n$ is a both-sided ideal, $D_{m,n}$ is a ring. Now $D_{m,n}$ will turn out to be the opposite algebra of the endomorphism algebra of $W_{m,n}$ via the following map:
\begin{definition}
We define a map $D_{m,n}\rightarrow \End(W_{m,n})^\text{op}$ as follows: For $a\in W(k)$
$$a\mapsto (W_{m,n}\rightarrow W_{m,n},\, x\mapsto a\cdot_W x)$$
$$F\mapsto V$$
$$V\mapsto F$$
Notice the switch of $F$ and $V$.
\end{definition}
Now we define the Dieudonn\'e module of a local-local group scheme killed by $F^m, V^n$ as follows:
\begin{definition}
Let $G$ be a finite commutative group scheme over $k$ such that $F^m, V^n$ are zero on $G$. Define
$$M_{m,n}(G)=\Hom_k(W_{m,n}, G) $$
Then $M_{m,n}(G)$ becomes a $D_{m,n}$-module via the map $D_{m,n}\rightarrow \End(W_{m,n})^\text{op}$.
\end{definition}
\begin{thm}
The functor $$M_{m,n}:\left\{
\begin{tabular}{@{}l@{}}
 finite commutative\\
  group schemes\\
of killed by $F^m, V^n$
\end{tabular}\right\}\longrightarrow  \left\{
\begin{tabular}{@{}l@{}}
  finitely generated left $D(k)$-modules killed\\
  by $V^m$ and\\ by $F^n$ 
\end{tabular}\right\}$$
is an exact equivalence. Furthermore, one has $M_{m,n}(W_{m,n})=D_{m,n}$.
\end{thm}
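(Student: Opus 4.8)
The plan is to obtain the theorem from the classical \emph{contravariant} Dieudonné equivalence by means of Cartier duality, the bridge being Theorem~\ref{WittDual}. Write $\mathcal{C}_{m,n}$ for the source category, i.e.\ finite commutative group schemes killed by $F^m$ and $V^n$. Cartier duality $G\mapsto G^D$ is an exact anti-equivalence from $\mathcal{C}_{m,n}$ onto $\mathcal{C}_{n,m}$: it is exact, and from $F_{G^D}=(V_G)^D$, $V_{G^D}=(F_G)^D$ one sees that $F^m=V^n=0$ on $G$ is equivalent to $V^m=F^n=0$ on $G^D$. Using the isomorphism $W_{m,n}^D\cong W_{n,m}$ of Theorem~\ref{WittDual}, this gives a functorial identification
\[
M_{m,n}(G)=\Hom_k(W_{m,n},G)\;\cong\;\Hom_k(G^D,W_{n,m}),
\]
exhibiting $M_{m,n}$ as the classical contravariant functor $H\mapsto\Hom_k(H,W_{n,m})$ precomposed with $(-)^D$. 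It remains to match the $D_{m,n}$-module structures: on the left, $D_{m,n}$ acts by precomposition with endomorphisms of $W_{m,n}$ through $D_{m,n}\to\End_k(W_{m,n})^{\mathrm{op}}$; dualizing turns this into postcomposition with the dual endomorphisms of $W_{n,m}$, and the interchange of $F$ and $V$ built into $D_{m,n}\to\End_k(W_{m,n})^{\mathrm{op}}$ is precisely compensated by the formula $\langle Fx,y\rangle=\langle x,t(y)\rangle$ of Theorem~\ref{WittDual}, so that $F$ and $V$ act on $\Hom_k(G^D,W_{n,m})$ as $F_{W_{n,m}}$ and $V_{W_{n,m}}$; since $W_{n,m}$ is killed by $F^n$ and $V^m$, every module in the image is killed by $F^n$ and $V^m$, as the target category requires.

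With this reduction made, what is left is the classical statement that $H\mapsto\Hom_k(H,W_{n,m})$ is an exact anti-equivalence of $\mathcal{C}_{n,m}$ onto the category of finitely generated $D(k)$-modules killed by $V^m$ and $F^n$, together with $\End_k(W_{n,m})\cong D_{n,m}^{\mathrm{op}}$; for this I would cite \cite{Demazure}. For orientation its proof has the usual Morita shape: granting that $W_{n,m}$ is an injective object of $\mathcal{C}_{n,m}$ and a cogenerator, the functor $\Hom_k(-,W_{n,m})$ is exact and faithful, and from the identification $\Hom_k(W_{n,m}^{\oplus a},W_{n,m})=D_{n,m}^{\oplus a}$ one deduces full faithfulness and essential surjectivity by resolving an arbitrary object of $\mathcal{C}_{n,m}$ (resp.\ an arbitrary finitely generated module over $D_{n,m}$) by finite sums of copies of $W_{n,m}$ (resp.\ of $D_{n,m}$) and passing to kernels and cokernels. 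That $W_{n,m}$ is injective and a cogenerator is in turn proved by d\'evissage along the filtrations by $\ker F^i$ and $\ker V^j$, reducing the extension and embedding problems to the single simple object of $\mathcal{C}_{n,m}$, namely $\alpha_p=W_{1,1}$; dually, the cogenerator property says that $W_{m,n}$ surjects onto every simple object of $\mathcal{C}_{m,n}$.

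This leaves the computation of $\End_k(W_{m,n})$, which also yields at once the final assertion, since $M_{m,n}(W_{m,n})=\Hom_k(W_{m,n},W_{m,n})=\End_k(W_{m,n})$ and, once $D_{m,n}\to\End_k(W_{m,n})^{\mathrm{op}}$ is known to be an isomorphism of rings, this is $D_{m,n}$ as a left $D_{m,n}$-module. I would run the computation on the explicit Hopf algebra $k[x_0,\dots,x_{n-1}]/(x_0^{p^m},\dots,x_{n-1}^{p^m})$: a homomorphism $W_{m,n}\to W_{m,n}$ is the same datum as an $n$-tuple of elements of this algebra that are primitive for the Witt comultiplication, and one must show that the $W(k)$-module of such $n$-tuples is spanned exactly by the coordinates of $a\cdot_W x$ ($a\in W(k)$), of $Fx$, and of $Vx$, with the relations among these operators being exactly those defining $D_{m,n}$; surjectivity of $D_{m,n}\to\End_k(W_{m,n})^{\mathrm{op}}$ is then immediate, and injectivity is a routine check on the coordinate functions (consistently, $\ell_{W(k)}(D_{m,n})=mn=\log_p|W_{m,n}|$). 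I expect this classification of all homomorphisms between truncated Witt group schemes---equivalently, of all primitive (``additive'') elements in the truncated Witt Hopf algebra, which is classical and goes back to Witt, cf.\ \cite{Demazure}---to be the only genuinely non-formal ingredient; everything else is the Cartier-duality reduction of the first paragraph together with the Morita and d\'evissage formalism.
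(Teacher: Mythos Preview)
Your proposal is correct and follows essentially the same route as the paper: reduce the covariant statement to the classical contravariant Dieudonn\'e equivalence in \cite{Demazure} (via Cartier duality and Theorem~\ref{WittDual}), and read off $M_{m,n}(W_{m,n})=D_{m,n}$ from the known description of the contravariant Dieudonn\'e module of $W_{m,n}$. The paper's proof simply records this reduction in two sentences, whereas you additionally sketch the Morita/d\'evissage mechanism underlying the cited result and the endomorphism computation; that extra detail is fine but not required here.
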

\begin{proof}
Usually Dieudonn\'e theory is stated via a contravariant functor, see e.g. \cite[p. 65]{Demazure}. The covariant version follows automatically from the contravariant one. Indeed, the existence of a contravariant equivalence and the description of the contravariant Dieudonn\'e module of $W_{m,n}$ of \cite[p. 66]{Demazure} implies our theorem.
\end{proof}
\begin{rem}
Notice that our definition of the $W(k)$-module structure of the Dieudonn\'e module differs from the usual one by a twist by a power of Frobenius. Also beware that $M_{m,n}$ depends on $m,n$ and changing it will also cause a twist by Frobenius.
\par However, our convention is easier to use for explicit computations as we do not have twists by powers of the inverse of Frobenius in the description of endomorphisms of truncated Witt group schemes. This is also crucial for the computation of families because the inverse of Frobenius is not defined on function fields.
\end{rem}
\subsection{Representation theory of finite commutative group schemes}\label{Repgrsch}
We shall make use of the representation theory of finite commutative group schemes. Throughout this section $G$ will be a finite commutative group scheme over a field $k$ and $R_G$ denotes its Hopf algebra, $c: R_G\rightarrow R_G\otimes R_G$ denotes the comultiplication, and $u: R_G\rightarrow k$ denotes the unit. Recall the following definition from \cite[p. 22]{Water}
\begin{definition} An $R_G$-comodule is a $k$-vector space $V$ together with a $k$-linear map $c_V: V\rightarrow V \otimes_k R_G$ such that
$$\begin{tikzcd}
V \arrow{r}{c_V}\arrow{d}{c_V} & V \otimes R_G \arrow{d}{\id \otimes c}\\
V\otimes R_G \arrow{r}{c_V\otimes \id}& V \otimes R_G \otimes R_G
\end{tikzcd}$$
and 
$$\begin{tikzcd}
V \arrow{r}{c_V}\arrow[equal]{d} & V\otimes R_G\arrow{d}{\id\otimes u} \\
V \arrow{r}{\sim} &V\otimes k
\end{tikzcd}$$
commute.
\end{definition}
Let $G^D$ denote the Cartier dual of $G$. Cartier duality can be used to understand the representation theory of $G$ via the following proposition:
\begin{prop} The functor $M\mapsto M^\vee$ induces an anti-equivalence of categories
$$\{R_G\text{-comodules of finite dimension} \} \rightarrow \{R_{G^D}\text{-modules of finite dimension} \}$$
\end{prop}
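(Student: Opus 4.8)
The plan is to reduce the statement to the purely linear-algebraic duality between finite-dimensional comodules over a finite-dimensional coalgebra and finite-dimensional modules over its dual algebra. Indeed, an $R_G$-comodule involves only the underlying coalgebra structure of $R_G$ (the comultiplication $c$ and the counit $u$), while an $R_{G^D}$-module involves only the underlying algebra structure of $R_{G^D}$; and by the very definition of the Cartier dual, the algebra $R_{G^D}$ is canonically $\Hom_k(R_G,k)$ with multiplication dual to $c$ and unit dual to $u$. Since $G$ is finite, $R_G$ is a finite-dimensional $k$-vector space, so the canonical maps $(R_G\otimes R_G)^\vee\rightarrow R_G^\vee\otimes R_G^\vee$ and $R_G\rightarrow R_G^{\vee\vee}$ are isomorphisms; this is what makes all the dualizations below behave well.

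First I would describe the functor on objects. Given a finite-dimensional $R_G$-comodule $(V,c_V)$ with $c_V:V\rightarrow V\otimes_k R_G$, one dualizes $c_V$ and composes with the canonical isomorphism $(V\otimes R_G)^\vee\cong V^\vee\otimes R_G^\vee$ and the flip of the two tensor factors to obtain a map $R_{G^D}\otimes V^\vee=R_G^\vee\otimes V^\vee\rightarrow V^\vee$. I claim this is an $R_{G^D}$-module structure: the coassociativity square for $c_V$ dualizes term by term to the associativity square for this multiplication (here one uses that the multiplication of $R_G^\vee$ is dual to $c$), and the counit triangle for $c_V$ dualizes to the unit axiom (using that the unit of $R_G^\vee$ is dual to $u$). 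A morphism of comodules $f:V\rightarrow W$ satisfies $(f\otimes\id)\circ c_V=c_W\circ f$, and dualizing shows that $f^\vee:W^\vee\rightarrow V^\vee$ is a homomorphism of $R_{G^D}$-modules; since this reverses arrows we obtain a contravariant functor.

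Next I would build a quasi-inverse by the symmetric construction: a finite-dimensional $R_{G^D}$-module $(N,a_N)$ is sent to $N^\vee$ equipped with the composite $N^\vee\rightarrow(R_G^\vee\otimes N)^\vee\cong R_G^{\vee\vee}\otimes N^\vee\cong R_G\otimes N^\vee\cong N^\vee\otimes R_G$ obtained from $a_N^\vee$, where finite-dimensionality of both $R_G$ and $N$ is used; the module axioms dualize to the comodule axioms exactly as before. Then one checks that the canonical evaluation isomorphism $V\stackrel{\sim}{\rightarrow}V^{\vee\vee}$ --- an isomorphism precisely because $\dim_k V<\infty$ --- identifies the given comodule structure on $V$ with the one induced on $V^{\vee\vee}$ by applying the two functors in succession, and similarly on the module side. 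Hence the two functors are mutually quasi-inverse anti-equivalences.

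The only real work is bookkeeping: one must fix once and for all the canonical identifications $(X\otimes Y)^\vee\cong X^\vee\otimes Y^\vee$ and the tensor-flip, and then verify that the dualized coassociativity and counit diagrams literally coincide with the associativity and unit diagrams for the algebra structure on $R_{G^D}$ dictated by Cartier duality. There is no conceptual obstacle, because the finiteness of $G$ eliminates every issue about infinite-dimensional duals; the argument is the standard finite-dimensional coalgebra/algebra duality (see \cite{Water}) together with the identification of $R_{G^D}$ with $\Hom_k(R_G,k)$ as a $k$-algebra.
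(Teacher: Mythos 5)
Your argument is correct and is exactly the standard finite-dimensional coalgebra/algebra duality that the paper invokes when it says the proposition ``follows immediately from the definitions''; you have simply written out the bookkeeping the paper leaves implicit. No gap and no divergence in approach.
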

\begin{proof}
Follows immediately from the definitions.
\end{proof}
\begin{cor}\label{WittReps}
A representation $\varphi: W_{n,m}\rightarrow \Gl(M)$ is of the following form:
There exist unique $X_0,\ldots, X_{n-1}\in \End(M)$ which commute and $X_\nu^{p^m}=0, \forall \nu$ such that for any $k$-scheme $T$, any $a=(a_0, \ldots, a_{m-1})\in W_{n,m}(T)$ and any vector $v\in \HH^0(T, M\otimes \mathcal{O}_T)$
$$ \varphi(a)(v)=E_{\text{AH}}((X_0, \ldots, X_{n-1},0,\ldots)\cdot_W a )(v)$$
\end{cor}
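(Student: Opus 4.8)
## Proof proposal for Corollary \ref{WittReps}

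The plan is to translate the statement into a statement about comodules over the Hopf algebra $R_{W_{n,m}}$, dualize it to a statement about modules over the dual Hopf algebra, identify the dual algebra explicitly, and then unwind the Artin–Hasse pairing.

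First I would invoke the preceding proposition: a representation $\varphi: W_{n,m}\to \Gl(M)$ is the same datum as an $R_{W_{n,m}}$-comodule structure on $M$ (a representation of a finite group scheme $G$ on a finite-dimensional vector space is by definition a comodule over $R_G$), and under the anti-equivalence $M\mapsto M^\vee$ this corresponds to a finite-dimensional module over $R_{(W_{n,m})^D}$. By Theorem \ref{WittDual} the Cartier dual of $W_{n,m}$ is $W_{m,n}$, so $R_{(W_{n,m})^D}=R_{W_{m,n}}=k[x_0,\ldots,x_{n-1}]/(x_0^{p^m},\ldots,x_{n-1}^{p^m})$, with its algebra (not Hopf) structure the obvious polynomial one. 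A module over this algebra is precisely the data of $n$ commuting endomorphisms of $M^\vee$ — the actions of $x_0,\ldots,x_{n-1}$ — each satisfying $x_\nu^{p^m}=0$. Dualizing back, we get $n$ commuting endomorphisms $X_0,\ldots,X_{n-1}$ of $M$ with $X_\nu^{p^m}=0$; uniqueness is immediate since the $x_\nu$ generate $R_{W_{m,n}}$ as an algebra and the anti-equivalence is faithful.

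Next I would pin down the formula. The content of the statement is that, under all these identifications, the comodule map $c_M: M\to M\otimes R_{W_{n,m}}$ — equivalently the universal $T$-point formula evaluated at $T=\Spec R_{W_{n,m}}$ with $a$ the tautological point — is given by $v\mapsto E_{\text{AH}}((X_0,\ldots,X_{n-1},0,\ldots)\cdot_W a)(v)$. Here the key input is the explicit description of the pairing realizing $W_{n,m}\cong (W_{m,n})^D$ from Theorem \ref{WittDual}: for $a\in W_{n,m}(T)$ and $y\in W_{m,n}(T)$ one has $\langle a,y\rangle = E_{\text{AH}}(\sigma_m(a)\cdot_W \sigma_n(y))$. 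Thinking of $y$ (or rather the coordinate functions $x_0,\ldots,x_{n-1}$ on $W_{m,n}$, which by Cartier duality are exactly the "universal character" coordinates) as acting via $X_0,\ldots,X_{n-1}$, the comodule coaction is obtained by substituting the $X_\nu$ into this pairing formula. Since $X_\nu^{p^m}=0$ and $\exp_{\text{AH}}(t)\in\mathbb{Z}_{(p)}\llbracket t\rrbracket$ (so all coefficients make sense in characteristic $p$ and the relevant product is a well-defined polynomial expression in the nilpotent operators), the expression $E_{\text{AH}}((X_0,\ldots,X_{n-1},0,\ldots)\cdot_W a)$ is a well-defined element of $\End(M)\otimes \mathcal{O}_T$, and one checks the comodule axioms (coassociativity and counitality) reduce to the biadditivity of $\langle\cdot,\cdot\rangle$ and the identity $E_{\text{AH}}(0)=1$.

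The main obstacle I anticipate is purely bookkeeping rather than conceptual: one must carefully track the several dualizations (comodule $\leftrightarrow$ module, $M\leftrightarrow M^\vee$, $W_{n,m}\leftrightarrow W_{m,n}$, and the switch of $F$ and $V$ implicit in the Dieudonné conventions recorded above) and verify that the $X_\nu$ as defined by "$x_\nu$ acts on $M^\vee$, dualize" really are the operators appearing in the claimed closed formula, i.e. that no extra Frobenius twist or sign sneaks in. The cleanest way to organize this is to test the formula on the universal point $a\in W_{n,m}(R_{W_{n,m}})$ and compare both sides as elements of $\End(M)\otimes R_{W_{n,m}}$; equality there is equivalent to the general statement by the Yoneda-type argument that a morphism of schemes is determined by the universal point. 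Everything else — that the $X_\nu$ commute, are nilpotent of the stated exponent, and are unique — follows formally from the structure of the polynomial algebra $k[x_0,\ldots,x_{n-1}]/(x_\nu^{p^m})$ and the anti-equivalence of categories.
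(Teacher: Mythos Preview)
Your proposal is correct and follows exactly the route the paper intends: the paper's entire proof is the single sentence ``Follows from the proposition and Theorem \ref{WittDual},'' and what you have written is precisely an unpacking of that sentence --- pass from the representation to a comodule, dualize via the proposition to a module over $R_{W_{m,n}}=k[x_0,\ldots,x_{n-1}]/(x_\nu^{p^m})$, read off the commuting nilpotent $X_\nu$, and recover the explicit action formula from the Artin--Hasse description of the duality pairing in Theorem \ref{WittDual}. Your remarks about the bookkeeping (tracking the dualizations and testing on the universal point) are exactly the details one would fill in, and there is no genuinely different idea here compared with the paper.
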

\begin{proof}
Follows from the proposition and Theorem \ref{WittDual}.
\end{proof}
Any representation of $W_{n,m}$ has a non-zero invariant vector because $W_{n,m}$ is unipotent. We shall make this result explicit by using the previous corollary for $m=1$ or $n=1$:
\begin{cor}\label{WittInvV} Consider $R=R_{W_{1,n}}=k[x_0]/(x_0^{p^n})$. Let $M$ be a finite dimensional $R$-comodule with comodule map $c_M:M\rightarrow M\otimes R$. Let $v\in M\setminus \{0\}$ be any vector. Write
$$c_M(v)=\sum_{i=0}^{p^n-1} v_i \otimes x_0^i$$
Suppose $j$ is the largest $i$ such that $v_i\neq 0$. Then $v_j\in M^{W_{1,n}}$.
\end{cor}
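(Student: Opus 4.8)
The strategy is to unwind the comodule axioms explicitly using the Hopf algebra structure of $R = k[x_0]/(x_0^{p^n})$ and extract the top-degree coefficient. First I would recall the comultiplication on $R$: since $W_{1,n}$ is the kernel of $F^n$ on the Witt group $W_n$, and its Hopf algebra is $k[x_0]/(x_0^{p^n})$ with $x_0$ the first Witt coordinate, the comultiplication is $c(x_0) = x_0 \otimes 1 + 1 \otimes x_0 + (\text{terms coming from the Witt addition polynomials})$. The key structural fact I need is that $c(x_0^i) \in R \otimes R$ has, in the filtration by degree in the \emph{second} tensor factor, leading term $x_0^i \otimes 1$ plus terms of strictly lower second-factor degree (this is immediate because $c(x_0) = x_0 \otimes 1 + 1\otimes x_0 + (\text{higher Witt corrections, each divisible by } x_0 \text{ in both factors})$, so the only way to reach second-factor degree $i$ from $c(x_0)^{\cdot i}$-type expansions is the pure $x_0^i \otimes 1$ contribution; more precisely I would set up a grading/filtration argument carefully).

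Next I would apply the coassociativity square to $v$. Writing $c_M(v) = \sum_{i} v_i \otimes x_0^i$, the left-bottom path gives $(c_M \otimes \id)(c_M(v)) = \sum_i c_M(v_i) \otimes x_0^i$, while the right-bottom path gives $(\id \otimes c)(c_M(v)) = \sum_i v_i \otimes c(x_0^i)$. Now I compare the coefficient of $x_0^0 \otimes x_0^j$ on both sides (here $j$ is the largest index with $v_j \neq 0$). On the left-bottom side, the coefficient of $x_0^0$ in the middle slot of $c_M(v_i)$ contributes; I would use the counit axiom — the second commuting square of the definition — which says $(\id \otimes u)\circ c_M = \id$, i.e. $v_0 = v$ and hence the coefficient of $x_0^0$ in the $R$-slot of $c_M(w)$ is always $w$ itself, for any $w \in M$. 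So the left side's coefficient of $x_0^0 \otimes x_0^j$ is exactly $v_j$. On the right side, by the structural fact about $c(x_0^i)$, the only way to produce a term $(\text{something})\otimes x_0^0 \otimes x_0^j$ is from $i = j$ via the leading term $x_0^j \otimes 1$ of $c(x_0^j)$ — wait, I must be more careful about which slot carries $x_0^0$; I would instead compare the coefficient of $x_0^j$ in the \emph{last} slot. On the right, $c(x_0^i)$ has a $x_0^j$ in the last slot only when $i \geq j$, and since $j$ is maximal we get only $i = j$, contributing $v_j \otimes (\text{first-slot part of the }x_0^j\text{-in-last-slot component of }c(x_0^j))$. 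By the structural fact this first-slot part is $x_0^0 = 1$ (plus possibly lower terms, but degree counting in the last slot pins it down). On the left, the coefficient of $x_0^j$ in the last slot is $c_M(v_j)$. Equating yields $c_M(v_j) = v_j \otimes 1$, which is precisely the statement that $v_j$ is $W_{1,n}$-invariant.

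The main obstacle I anticipate is bookkeeping the precise shape of $c(x_0^i)$ — i.e. verifying cleanly that in $c(x_0^i) = \sum_{a,b} c_{a,b}^{(i)}\, x_0^a \otimes x_0^b$ one has $c_{a,b}^{(i)} = 0$ whenever $b > i$, and $c_{0,i}^{(i)} = 1$. This follows from the fact that the Witt-addition comultiplication satisfies $c(x_0) \equiv x_0 \otimes 1 + 1 \otimes x_0 \pmod{(x_0 \otimes x_0)}$ together with multiplicativity of $c$, but I would want to state it as a small sublemma (perhaps with a degree/weight argument, noting that $x_0$ has weight $1$ and the Witt correction terms are isobaric) rather than gloss over it. Everything else is a routine extraction of coefficients from the two comodule axioms, so once the sublemma on $c(x_0^i)$ is in place the corollary drops out immediately. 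I would present the whole argument in two or three lines in the paper, citing the Hopf structure of $W_{1,n}$ and the comodule axioms.
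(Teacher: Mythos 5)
Your proof is correct, but it takes a genuinely different route from the one in the paper. The paper's proof goes through Cartier duality: it invokes Corollary \ref{WittReps} to encode the comodule by commuting nilpotent operators $X_0,\dots,X_{n-1}$ with $X_\nu^p=0$, expands the Artin--Hasse exponential $E_{\mathrm{AH}}\bigl((X_0,\dots,X_{n-1},0,\dots)\cdot_W(x_0,0,\dots)\bigr)$ (which truncates to an ordinary exponential), reads off that $v_k=\underline{X}^{\underline{i}}(v)/\underline{i}!$ for $k=\sum_\nu i_\nu p^\nu$, and concludes that the maximality of $j$ forces $X_\nu(v_j)=0$ for all $\nu$. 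You instead work entirely on the comodule side, using only coassociativity, the counit axiom, and the shape of the comultiplication $c(x_0^i)$; comparing the coefficient of $x_0^j$ in the last tensor slot of $(c_M\otimes\id)c_M(v)=(\id\otimes c)c_M(v)$ gives $c_M(v_j)=v_j\otimes 1$ directly. Your argument is more elementary (no dual module, no Artin--Hasse series) and is the kind of triangularity argument that also underlies the paper's inductive lemma following Corollary \ref{WittInvF}; the paper's route has the advantage of producing the operators $X_\nu$, which it reuses to identify $M^{W_{1,n}}$ with $\bigcap_\nu\ker X_\nu$ and which match the algorithmic viewpoint of that section.

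One small point on your anticipated obstacle: the sublemma you worry about is in fact trivial here, because $x_0$ is the zeroth Witt coordinate and the group in question is a subgroup of $W_1=\mathbb{G}_a$, so $c(x_0)=x_0\otimes 1+1\otimes x_0$ with \emph{no} Witt correction terms at all; hence $c(x_0^i)=\sum_{a+b=i}\binom{i}{a}\,x_0^a\otimes x_0^b$ and both $c^{(i)}_{a,b}=0$ for $b>i$ and $c^{(i)}_{0,i}=1$ are immediate. Be aware, though, that the heuristic you offer as a fallback ("corrections divisible by $x_0$ in both factors cannot raise the second-factor degree above $i$") is not valid as stated --- a hypothetical term $x_0^a\otimes x_0^b$ with $a,b\geqslant 1$ and $b\geqslant 2$ in $c(x_0)$ would break the bound --- so if you do write the sublemma, justify it by the absence of corrections rather than by that degree count. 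This matters if you try to transport the argument to the companion statement Corollary \ref{WittInvF}, where higher Witt coordinates appear and the triangularity must be taken with respect to the weighted order $\sum_\nu i_\nu p^\nu$ rather than ordinary degree.
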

\begin{proof}
Clearly an $i$ with $v_i\neq 0$ exists because $v_0=v\neq 0$. Suppose $j$ is the largest such $i$. We need to prove that $v_j\in M^{W_{1,n}}$. Indeed, by Corollary \ref{WittReps} the comodule $M$ can be described with $X_0,\ldots, X_{n-1}\in \End(M)$ which commute and $X_\nu^{p}=0$. Then we have
$$c_M(v)=E((X_0, \ldots, X_{n-1},0, \ldots)\cdot_W (x_0,0,\ldots) )(v)=$$
$$E((x_0 X_0, x_0^p X_1, \ldots x_0^{p^{n-1}} X_{n-1}))(v)=\left(\prod_{\nu=0}^{n-1} \exp_{\text{AH}}(x_0^{p^\nu} X_\nu)\right)(v)$$
Now since $X_\nu^{p}=0$ for all $\nu$, the Artin-Hasse exponential series collapses after the $p$th term and thus coincides with the usual exponential series. Thus
$$c_M(v)= \left(\prod_{\nu=0}^{n-1} (\sum_{i=0}^{p-1} \frac{(x_0^{p^\nu} X_\nu)^i}{i!})\right)(v)= \sum_{\underline{i}\in \{0,\ldots, p-1\}^m } \frac{x_0^{\sum_{\nu=0}^{n-1} i_\nu p^{\nu}} \underline{X}^{\underline{i}}(v) }{\underline{i}!}$$
where we use the shorthand notation $\underline{X}^{\underline{i}}=\prod_{\nu=0}^{n-1} X_\nu^{i_\nu}$ and $\underline{i}!=\prod_{\nu=0}^{n-1} i_\nu !$. This implies that $v_{j+p^\nu}$ is equal to $X_\nu(v)$ up to a non-zero factor for any $\nu=0,\ldots, n-1$. But by our assumption on $j$ one has $v_{j+p^\nu}=0$. Therefore $v\in \bigcap_{\nu=0}^{n-1} \ker(X_\nu)$. On the other hand $\bigcap_{\nu=0}^{n-1} \ker(X_\nu) = M^{W_{1,n}}$. This proves the claim.

\end{proof}
\begin{cor}\label{WittInvF} Consider $R=R_{W_{m,1}}=k[x_0, \ldots, x_{m-1} ]/(x_0^p,\ldots, x_{m-1}^p)$.
Let $M$ be a finite dimensional $R$-comodule with comodule map $c_M:M\rightarrow M\otimes R$. Let $v\in M\setminus \{0\}$ be any vector. Write
$$c_M(v)=\sum_{\underline{i}\in \{0,\ldots, p-1\}^m } v_{\underline{i}} \otimes \underline{x}^{\underline{i}}$$
Here we will use the shorthand notation $\underline{x}^{\underline{i}}=\prod_{\nu=0}^{m-1} x_\nu^{i_\nu}$ for a tuple $\underline{i}=(i_0, \ldots, i_{m-1})$. We define a total ordering on $\{0,\ldots, p-1\}^m$ by saying $\underline{i}\succeq \underline{i'}$ if $\sum_{\nu=0}^{m-1} i_\nu p^\nu \geqslant \sum_{\nu=0}^{m-1} i'_\nu p^\nu$.
\par Suppose $\underline{j}$ is maximal among the $\underline{i}$ satisfying $v_{\underline{i}}\neq 0$. Then $v_{\underline{j}}\in M^{W_{m,1}}$.
\end{cor}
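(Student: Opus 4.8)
The plan is to transpose the proof of Corollary \ref{WittInvV}, interchanging the two Witt parameters. First I would apply Corollary \ref{WittReps} to the $R$-comodule $M$: it yields a single endomorphism $X_0\in\End(M)$ with $X_0^{p^m}=0$ such that the comodule map is
\[
c_M(v)=E_\text{AH}\!\big((X_0,0,\dots)\cdot_W(x_0,\dots,x_{m-1})\big)(v),
\]
where $(x_0,\dots,x_{m-1})$ is the universal point of $W_{m,1}$ over $R$ and $\cdot_W$ denotes Witt multiplication.

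Second, I would unwind the right-hand side. Since $(X_0,0,\dots)$ is a Teichmüller vector, multiplying it by $(x_0,\dots,x_{m-1})$ gives the diagonal Witt vector $(X_0x_0,\,X_0^{p}x_1,\,\dots,\,X_0^{p^{m-1}}x_{m-1},0,\dots)$, so $E_\text{AH}$ of it factors as $\prod_{\nu=0}^{m-1}\exp_\text{AH}(X_0^{p^\nu}x_\nu)$. Each argument $X_0^{p^\nu}x_\nu$ has vanishing $p$-th power because $x_\nu^p=0$ in $R$, so---exactly as in the proof of Corollary \ref{WittInvV}---each Artin--Hasse factor collapses to an ordinary finite exponential in that argument. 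Expanding the product over tuples $\underline{i}\in\{0,\dots,p-1\}^m$ and reading off the coefficient of $\underline{x}^{\underline{i}}$ then gives
\[
v_{\underline{i}}=\lambda_{\underline{i}}\,X_0^{\|\underline{i}\|}(v),\qquad \|\underline{i}\|:=\textstyle\sum_{\nu=0}^{m-1}i_\nu p^\nu,
\]
for suitable nonzero scalars $\lambda_{\underline{i}}\in k$ (the factorials occurring are units since each $i_\nu<p$).

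Third, I would finish combinatorially; this is in fact slightly easier than in Corollary \ref{WittInvV} because only one operator occurs. The map $\underline{i}\mapsto\|\underline{i}\|$ is the base-$p$ digit bijection $\{0,\dots,p-1\}^m\xrightarrow{\ \sim\ }\{0,\dots,p^m-1\}$, and it is an isomorphism of totally ordered sets for the ordering $\succeq$ in the statement. The set $\{\ell\ge0:X_0^\ell(v)\neq0\}$ is a segment $\{0,1,\dots,L\}$ with $L\le p^m-1$ (as $X_0^{p^m}=0$), so $\underline{j}$ being $\succeq$-maximal with $v_{\underline{j}}\neq0$ forces $\|\underline{j}\|=L$; hence $X_0(v_{\underline{j}})=\lambda_{\underline{j}}X_0^{L+1}(v)=0$. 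Finally, reading off the coefficient of $x_0$ in $c_M(w)$ shows that $w$ is $W_{m,1}$-invariant iff $X_0(w)=0$, i.e.\ $M^{W_{m,1}}=\ker(X_0)$, whence $v_{\underline{j}}\in M^{W_{m,1}}$.

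I do not anticipate a real obstacle---the argument is the Cartier-dual companion of Corollary \ref{WittInvV}. The only points needing care are applying Corollary \ref{WittReps} to the correct group scheme (so that $M$ is described by a \emph{single} operator of nilpotence order $p^m$, rather than by $m$ operators of order $p$) and verifying that multiplication by the Teichmüller vector $(X_0,0,\dots)$ produces the diagonal shape that makes the Artin--Hasse factors truncate; both are immediate from the defining formulas for Witt multiplication and $\exp_\text{AH}$.
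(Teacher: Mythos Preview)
Your argument is correct and is precisely the Cartier-dual analogue the paper has in mind when it writes ``the proof works the same way as in Corollary~\ref{WittInvV}'': applying Corollary~\ref{WittReps} to this group scheme produces a \emph{single} nilpotent operator $X_0$ with $X_0^{p^m}=0$, the Teichm\"uller multiplication and truncation of $\exp_{\mathrm{AH}}$ go through exactly as you describe, and the identification $M^{W_{m,1}}=\ker(X_0)$ finishes it. The only cosmetic point is that ``reading off the coefficient of $x_0$'' really shows one implication; the converse (that $X_0(w)=0$ forces all higher coefficients to vanish) follows immediately since every $v_{\underline{i}}$ is a positive power of $X_0$ applied to $w$.
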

\begin{proof}
The proof works the same way as in Corollary \ref{WittInvV}.
\end{proof}
We will now explain how Corollary \ref{WittInvV} (resp. Corollary \ref{WittInvF}) leads to an algorithm to compute a non-zero invariant vector of a $W_{m,n}$-module. The basic idea is to inductively construct vectors invariant under increasing subgroup schemes. The filtration will be obtained from the powers of $V$ (resp. $F$).
\par There is no significant difference between the two methods. We prefer using powers of $V$ because it is easier to implement. However, the other option has a slight advantage concerning speed. But this only pays off if $g$ is very large and there was no difference in our experiments.
\par Let us start to discuss the method using powers of $V$. The other variant is analogous and will be omitted. Indeed, let $M$  be a finite dimensional $W_{m,n}$-comodule. There is a filtration $0=W_{0,n}\hookrightarrow W_{1,n} \hookrightarrow \ldots \hookrightarrow W_{m,n}$ where the inclusions are induced by the Verschiebung map. The Hopf algebra of $W_{m,n}$ is given by $R=R_{W_{m,n}}=k[x_0, \ldots, x_{m-1}]/(x_0^{p^n}, \ldots, x_{m-1}^{p^n})$. In this description the closed immersion $W_{i,n} \hookrightarrow W_{m,n}$ corresponds to the ideal $(x_0, \ldots, x_{m-i-1})$.
\par We will inductively construct a non-zero vector invariant under $W_{i,n}$. Indeed, let $v\in M\backslash \{0\}$ be any vector. Then $v$ is invariant under the trivial group. Suppose now we have already constructed a $v_{i-1}\in M^{W_{i-1,n}}\backslash \{0\}$. Let us apply the comodule map $c_M: M \rightarrow M \otimes R$ to $v_{i-1}$. This leads to the following lemma:
\begin{lem} In the notation from above, let $R_i$ denote the subalgebra of $R$ generated by $(x_0, \ldots, x_{m-i})$. Then $c_M(v_{i-1})\in M \otimes R_i$.
\par Furthermore, let us expand $c_M(v_{i-1})$ in the monomial basis of $R_i$. Suppose $v_i \otimes x_{m-i}^j$ is the non-zero tensor in this expansion involving only $x_{m-i}$ and having $j$ maximal with respect to this property. Then $v_i\in M^{W_{i,n}}$.
\end{lem}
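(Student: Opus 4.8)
The plan is to establish the two assertions separately, reducing the second to Corollary~\ref{WittInvV} applied to the consecutive quotient $W_{i,n}/W_{i-1,n}\cong W_{1,n}$ in the Verschiebung filtration.

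For the containment $c_M(v_{i-1})\in M\otimes R_i$, I would first note that $R_i=k[x_0,\dots,x_{m-i}]/(x_0^{p^n},\dots,x_{m-i}^{p^n})$ is a sub-Hopf-algebra of $R$: the $l$-th Witt addition polynomial involves only the Witt coordinates $0,\dots,l$, so $c(x_l)\in R_i\otimes R_i$ for $l\le m-i$. Hence $R_i$ is the Hopf algebra of a quotient group scheme $q\colon W_{m,n}\twoheadrightarrow W_{m-i+1,n}$, and comparing the Hopf ideal $(x_0,\dots,x_{m-i})$ with the description of the filtration recalled above shows $\ker q=W_{i-1,n}$. Since $W_{m,n}$ is commutative, $M^{W_{i-1,n}}$ is a $W_{m,n}$-subcomodule on which the coaction factors through $W_{m,n}/W_{i-1,n}=W_{m-i+1,n}$; therefore $c_M(v_{i-1})\in M^{W_{i-1,n}}\otimes R_i$. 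This proves the first claim and, as a byproduct, shows that every coefficient in the monomial expansion of $c_M(v_{i-1})$ lies in $M^{W_{i-1,n}}$.

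For the second claim I would pass to $N:=M^{W_{i-1,n}}$, viewed as a comodule over $W_{1,n}\cong W_{i,n}/W_{i-1,n}$. In $R_{W_{i,n}}=k[x_{m-i},\dots,x_{m-1}]/(\dots)$ the variable $x_{m-i}$ is the zeroth Witt coordinate, so $k[x_{m-i}]/(x_{m-i}^{p^n})$ is a sub-Hopf-algebra with additive comultiplication, and it is exactly the Hopf algebra of $W_{i,n}\twoheadrightarrow W_{i,n}/W_{i-1,n}$. Composing the coaction of $N$ with $R\twoheadrightarrow R_{W_{i,n}}$ and then with the projection onto this subalgebra computes the $W_{1,n}$-coaction of $N$; combined with $c_M(v_{i-1})\in N\otimes R_i$ from the first part, this $W_{1,n}$-coaction of $v_{i-1}$ equals $\sum_j v^{(j)}\otimes x_{m-i}^j$, where $v^{(j)}\in N$ is precisely the coefficient of the monomial $x_{m-i}^j$ (the part ``involving only $x_{m-i}$'') in the expansion of $c_M(v_{i-1})$. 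Corollary~\ref{WittInvV}, applied to the $W_{1,n}$-comodule $N$ and the nonzero vector $v_{i-1}$, then yields that $v_i:=v^{(j)}$ with $j$ maximal such that $v^{(j)}\ne 0$ lies in $N^{W_{1,n}}$. Finally $N^{W_{1,n}}=(M^{W_{i-1,n}})^{W_{i,n}/W_{i-1,n}}=M^{W_{i,n}}$, which is the assertion.

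The routine ingredients are the citation of Corollary~\ref{WittInvV} and the standard identity $(M^{G'})^{G/G'}=M^{G}$ for a normal subgroup scheme $G'\trianglelefteq G$. The step that needs care — and which I regard as the main obstacle — is the bookkeeping in the first two paragraphs: checking against the explicit variables $x_0,\dots,x_{m-1}$ that $R_i$ is the Hopf algebra of $W_{m,n}/W_{i-1,n}$ and that $k[x_{m-i}]/(x_{m-i}^{p^n})$ is the Hopf algebra of $W_{i,n}/W_{i-1,n}$, so that the ``pure-$x_{m-i}$ part'' of $c_M(v_{i-1})$ genuinely records the $W_{i,n}/W_{i-1,n}$-coaction. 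Once these identifications are nailed down, the lemma is a direct reduction to the $W_{1,n}$ case already treated in Corollary~\ref{WittInvV}.
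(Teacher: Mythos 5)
Your proof is correct and follows the same route as the paper: the first claim is obtained by observing that the coaction on the $W_{i-1,n}$-invariant vector $v_{i-1}$ factors through the quotient $W_{m,n}/W_{i-1,n}$, whose Hopf algebra is $R_i$, and the second claim reduces to Corollary \ref{WittInvV} applied to $M^{W_{i-1,n}}$ viewed as a comodule over $W_{i,n}/W_{i-1,n}\cong W_{1,n}$. You have merely made explicit the Hopf-algebra bookkeeping that the paper's proof leaves as "easy to see".
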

\begin{proof}
By assumption we have $v_{i-1}\in M^{W_{i-1,n}}$. Therefore the action of $W_{m,n}$ on $v_{i-1}$ factors through the quotient $W_{m,n}/W_{i-1,n}$. But the quotient map $W_{m,n}\rightarrow W_{m,n}/W_{i-1,n}$ is induced from the inclusion $R_i \subset R$. This proves the first claim.
\par Next we need to construct a vector invariant under $W_{i, n}$. The action of the subgroup scheme $W_{i,n}\hookrightarrow W_{m,n}$ factors through the quotient $W_{i,n}/W_{i-1,n} \cong W_{1,n}$. Therefore we obtain an invariant vector $v_i$ via the construction of Corollary \ref{WittInvV}. It is easy to see that the vector $v_i$ has a description as in the statement.
\end{proof}
\newpage
\section{Completely decomposed $\Theta_{\eta}$-divisors}\label{ChapcdTheta}
\subsection{Definition and geometric interpretation}
\begin{definition} Let $\eta$ be a polarization on $E^g$ with $\ker(\eta)$ connected. A \emph{completely decomposed $\Theta_{\eta}$-divisor} is a divisor of the form $D=\sum_{i=1}^g p^{n_i} A_i$ inducing the polarization $\eta$ where the $A_i$ are Abelian subvarieties of $E^g$ and $n_i\in \mathbb{N}$.
\end{definition}
\begin{definition}\label{DefGrschCompDec} Let $\eta$ be a polarization on $E^g$ with connected kernel. For a completely decomposed $\Theta_{\eta}$-divisor $D=\sum_{i=1}^g p^{n_i} A_i$ we consider the quotient maps $\xi_i:E^g\rightarrow E^g/A_i=E_i$. We define the \emph{group scheme associated to $D$} to be the scheme theoretic intersection $\cap_{i=1}^g \xi_i^{-1} \left(E_i[F^{n_i}]\right)$.
\end{definition}
The following lemma gives a geometric interpretation of a completely decomposed $\Theta_{\eta}$-divisor in terms of products of elliptic curves. This will explain their name.
\begin{lem}\label{AuxDivSuper} Let $\eta$ be a polarization on $E^g$ with $\ker(\eta)$ connected. A completely decomposed $\Theta_{\eta}$-divisor $D=\sum_{i=1}^g p^{n_i} A_i$ is invariant under the group scheme $\mathcal{H}$ associated to $D$. 
\par Furthermore, $\mathcal{H} \subset \ker(\eta)$ is a maximal isotropic subgroup scheme. The quotient $E^g/\mathcal{H}$ is isomorphic as a principally polarized abelian variety to a product of elliptic curves with the product polarization.
\end{lem}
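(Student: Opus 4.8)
The plan is to analyze the divisor $D = \sum_{i=1}^g p^{n_i} A_i$ by pushing it forward along the isogeny $\phi = (\xi_1, \dots, \xi_g)\colon E^g \to E_1 \times \dots \times E_g$, where $\xi_i\colon E^g \to E^g/A_i = E_i$. First I would observe that since $D$ induces the polarization $\eta$ and each $A_i$ is an abelian subvariety whose quotient is an elliptic curve, the divisor $p^{n_i} A_i$ is (up to translation and rational equivalence) the pullback $\xi_i^*(p^{n_i} \cdot \{0_{E_i}\})$. Hence $D$ is rationally equivalent to $\phi^*(\Theta_0)$ where $\Theta_0 = \sum_{i=1}^g p^{n_i}(\{0\}\times\cdots\times E_i \times \cdots\times\{0\})$ is essentially the natural product theta divisor on $\prod E_i$ scaled by the $p^{n_i}$; more precisely, since $\ker(\xi_i) = A_i$ one has $\ker(\xi_i) \subseteq \ker$ of the polarization attached to $\xi_i^*(p^{n_i}\{0\})$, and the associated group scheme $\mathcal{H} = \bigcap_i \xi_i^{-1}(E_i[F^{n_i}])$ is precisely the kernel of the isogeny $\psi\colon E^g \to \prod E_i$ obtained by composing $\phi$ with the product of the isogenies $[F^{n_i}]\colon E_i \to E_i$ (using $F^2 = -p$, so $F^{n_i}$ has degree $p^{n_i}$).

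Next I would check invariance of $D$ under $\mathcal{H}$. Since $\mathcal{H}$ is the kernel of $\psi = \prod_i(F^{n_i}\circ\xi_i)$, and $D$ is (rationally equivalent to, in fact equal to after choosing the right representative) $\psi^*$ of the product theta divisor $\Theta_{\prod E_i}$ on $\prod E_i$, the divisor $D$ is automatically $\mathcal{H}$-invariant because pullbacks of divisors along $\psi$ are translation-invariant under $\ker\psi$. One has to be slightly careful that this holds for the actual divisor and not merely its class; this follows because the natural product theta divisor on $\prod E_i$ is genuinely translation-invariant under nothing, but its $\psi$-pullback as a \emph{subscheme} is $\ker\psi$-invariant by functoriality of $t_h^*\psi^* = \psi^*$ for $h \in \ker\psi$. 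So invariance is essentially formal once the decomposition $D = \psi^*\Theta_{\prod E_i}$ is established.

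Then I would verify that $\mathcal{H} \subseteq \ker(\eta)$ and that it is maximal isotropic. The polarization induced by $D$ is $\eta$, and the polarization induced by the product theta divisor on $\prod E_i$ is principal; the relation $\eta = \psi^\vee \circ \lambda_{\mathrm{prod}} \circ \psi$ (pullback of polarization along an isogeny) gives $\ker(\psi) \subseteq \ker(\eta)$, i.e. $\mathcal{H}\subseteq\ker(\eta)$. For isotropy and maximality I would use the standard fact that for an isogeny $\psi$ with $\psi^*\lambda_{\mathrm{prod}}$-pullback polarization $\eta$, the subgroup $\ker\psi \subseteq \ker\eta$ is isotropic for the commutator pairing $e^\eta$, and it is maximal isotropic exactly when the quotient $(E^g,\eta)/\mathcal{H}$ is principally polarized — which here it is, being $(\prod E_i, \lambda_{\mathrm{prod}})$. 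Concretely, $|\ker\psi|^2 = \deg\psi^2 = \deg\eta$, and a Lagrangian check $|\mathcal{H}|^2 = |\ker\eta|$ confirms maximality. Finally, the quotient $E^g/\mathcal{H} \cong \prod_i E_i$ as a variety, and the descended polarization is the product principal polarization by construction, giving the last assertion.

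The main obstacle I anticipate is the bookkeeping needed to pin down $D$ \emph{on the nose} (not just up to rational equivalence) as $\psi^*$ of a specific symmetric divisor on $\prod E_i$, since the lemma asserts $D$ itself is $\mathcal{H}$-invariant; one must choose the translates of the $A_i$ in $D$ compatibly so that the sum is exactly a pullback. A related subtlety is handling inseparability carefully: $\psi$ has connected kernel, so "invariance under $\mathcal{H}$" must be interpreted scheme-theoretically (via the action on $\mathcal{O}_{E^g}$ / Hopf-algebra comodule level), and the degree computation $\deg(F^{n_i}) = p^{n_i}$ relies on $F^2 = -p$ on $E$. Everything else — isotropy, maximality, the identification of the quotient — then follows from general theory of polarizations under isogenies as recalled implicitly above.
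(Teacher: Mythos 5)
Your argument is correct, and for the main part of the lemma it takes a genuinely different route from the paper. You work ``upstairs'': you exhibit $D$ on the nose as $\psi^*$ of the product theta divisor along $\psi=\prod_i(F^{n_i}\circ\xi_i)\colon E^g\to\prod_i E_i^{(p^{n_i})}$, note $\ker\psi=\mathcal{H}$, and then read off invariance, isotropy, maximality (by the degree count $|\mathcal{H}|^2=\deg(\psi)^2=\deg(\eta)$, equivalently principality of the descended polarization) and the product structure of the quotient all at once from the general theory of polarizations under isogenies. The paper instead works ``downstairs'': it proves invariance by comparing the ideals of $p^{n_i}A_i$ and $\xi_i^{-1}(E_i[F^{n_i}])$ (essentially your observation), gets isotropy from the existence of a level subgroup over an invariant divisor, and then analyzes the image divisor $\pi(D)=\sum A_i'$ on $A=E^g/\mathcal{H}$, showing the $A_i'$ are in general position (via $\ell$-torsion and Lie algebras) so that $\prod_i\bigl(\cap_{j\neq i}A_j'\bigr)\to A$ is an isomorphism carrying the product polarization to the descended one. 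Your approach is cleaner and avoids that general-position argument entirely; also, your worry about translates is unfounded, since the $A_i$ are abelian subvarieties through $0$, so $D=\psi^*\bigl(\sum_i p_i^{-1}(0)\bigr)$ exactly. The one point you should not gloss over is that calling $\psi$ an isogeny presupposes $\ker\psi=\mathcal{H}$ (equivalently $\cap_iA_i$) is finite, and your derivation of $\mathcal{H}\subseteq\ker(\eta)$ from $\eta=\psi^\vee\circ\lambda_{\mathrm{prod}}\circ\psi$ already uses this. The fix is to reorder: invariance of $D$ under $\mathcal{H}$ needs only that $\psi$ is a homomorphism, and it gives $\mathcal{H}\subseteq K(\mathcal{O}(D))=\ker(\eta)$, which is finite since $D$ is ample; then $\psi$ is an isogeny and the rest of your argument goes through.
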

\begin{proof}
$D$ is $\mathcal{H}$-invariant since for any $i\in \{1,\ldots, g\}$ the two Cartier divisors $p^{n_i} A_i$ and $\xi^{-1}\left(E_i[F^{n_i}]\right)$ have the same ideal and invariance of divisors is compatible with taking subgroups and adding Cartier divisors (the Cartier property is important here). The assertion that $\mathcal{H} \subset \ker(\eta)$ is isotropic follows from \cite[Proposition 3.10]{AP}.
\par We show now the second claim. Indeed, denote the quotient $E^g/\mathcal{H}$ by $A$ and the quotient map $E^g\rightarrow A$ by $\pi$. Since $\mathcal{H}\subset \ker(\eta)$ is isotropic, $\eta$ descends to a polarization $\lambda$. Notice that we do not yet know the degree of $\lambda$. By \cite[Proposition 3.10]{AP} $\lambda$ is induced by the divisor $\pi(D)$. Now the divisor $\pi(D)$ has the shape $\sum_{i=1}^g p^{n_i'} A_i'$ where $A_i'=\pi(A_i)$ is an abelian subvariety of $A$. Therefore we can define a group scheme $\mathcal{H}'=\cap (A/A_i')[F^{n_i'}]$. Then $\pi(D)$ is an $\mathcal{H}'$-invariant divisor. Thus $\pi^{-1}(\pi(D))$ is an $\pi^{-1}(\mathcal{H}')$-invariant divisor. But $D\subseteq \pi^{-1}(\pi(D))$ and by \cite[Proposition 3.10]{AP} these two divisors are rationally equivalent. This implies $D=\pi^{-1}(\pi(D))$.  But $D$ is a sum of abelian subvarieties and thus the only groups leaving $D$ invariant are the subgroups of the intersection of the components of $D$. Therefore $\pi^{-1}(\mathcal{H}')=\mathcal{H}$ and hence $\mathcal{H}'=\{0\}$. From that one infers that we must have $n_i'=0$ and $\cap A_i'=\{0\}$ scheme-theoretically. We claim that for any subset $S\in \{1, \ldots, g\}$ the intersection $\cap_{i\in S} A_i'$ is an abelian variety of codimension $\sharp S$. Indeed, $\cap_{i\in S} A_i'$ is connected since for any prime $l\neq p$ the $\mathbb{F}_l$ vector spaces $A_i'[l](\overline{k})\subset A[l](\overline{k})$ have codimension $2$ inside a $2g$-dimensional vector space and their total intersection is trivial. Thus they must be in general position. Similarly $\cap_{i\in S} A_i'$ is reduced since the Lie algebras $\text{Lie}(A_i') \subset \text{Lie}(A)$ are of codimension $1$ inside a $g$ dimensional $k$-vector space.
\par The claim implies that the $E_i'=\cap_{j\neq i} A_j'$ are elliptic curves. Furthermore, the claim implies that the map $\prod E_i'\rightarrow A$ induced from the inclusion is an isomorphism. This map sends the divisor $\pr^{-1}_i(0)$ to $A_i'$ and therefore identifies $\lambda$ with the product polarization on $\prod E_i'$. In particular $\lambda$ is a principal polarization and thus $\mathcal{H}\subset \ker(\eta)$ is a maximal isotropic subgroup.
\end{proof}
\begin{cor} $\mathcal{H}$ lifts to a level group in $\mathcal{G}(\mathcal{O}(D))$. 
\end{cor}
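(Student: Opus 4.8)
The plan is to build the level subgroup directly out of the $\mathcal{H}$-invariant divisor $D$, i.e.\ to use the classical dictionary between invariant divisors (or descent data) and level subgroups. First recall that $\mathcal{O}(D)$ induces $\eta$, so $K(\mathcal{O}(D))=\ker(\eta)$, and by Lemma \ref{AuxDivSuper} $\mathcal{H}$ is a finite subgroup scheme of $K(\mathcal{O}(D))$ under which the Cartier divisor $D$ is invariant. Thus what has to be produced is a homomorphic section of the projection $\mathcal{G}(\mathcal{O}(D))\to K(\mathcal{O}(D))$ over $\mathcal{H}$ whose image is a closed subgroup scheme; that image is then, by definition, a level subgroup lifting $\mathcal{H}$.

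To produce the section I would argue functorially. Fix a $k$-scheme $T$ and a point $x\in\mathcal{H}(T)$. Base-changing the invariance from Lemma \ref{AuxDivSuper}, translation $t_x$ on $E^g_T$ carries $D_T$ to itself as a closed subscheme, hence $t_x^*\mathcal{I}_{D_T}=\mathcal{I}_{D_T}$; dualizing gives a canonical isomorphism $\phi_x\colon \mathcal{O}(D)_T\xrightarrow{\sim}t_x^*\mathcal{O}(D)_T$. The assignment $x\mapsto(x,\phi_x)$ is natural in $T$, and since the $\phi_x$ are obtained by restricting to $\mathcal{H}$ the translation action of $E^g$ on the pair $(E^g,D)$, one checks directly that $(x_1,\phi_{x_1})(x_2,\phi_{x_2})=(x_1+x_2,\phi_{x_1+x_2})$ in $\mathcal{G}(\mathcal{O}(D))$. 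Hence $x\mapsto(x,\phi_x)$ defines a homomorphism $\mathcal{H}\to\mathcal{G}(\mathcal{O}(D))$ splitting the projection. Because $\mathcal{G}(\mathcal{O}(D))$ is an extension of the finite group scheme $\ker(\eta)$ by $\mathbb{G}_m$, it is affine over $k$, so this section is a closed immersion; its image is therefore a closed subgroup scheme mapping isomorphically onto $\mathcal{H}\subseteq K(\mathcal{O}(D))$, i.e.\ the desired level subgroup. (This is the invariant-divisor half of Mumford's correspondence between descent data and level subgroups; see \cite[\S 1]{EqDefAV} and \cite[Proposition 3.10]{AP}.)

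The one point needing genuine care is that $\mathcal{H}\subseteq\ker(\eta)$ is infinitesimal, so the invariance of $D$ must be understood scheme-theoretically and for all $T$-points of $\mathcal{H}$, not merely for its $\overline{k}$-points; this is exactly why Lemma \ref{AuxDivSuper} is stated in terms of Cartier divisors rather than set-theoretic invariance, and it is precisely what makes $\phi_x$ well defined in the required generality. Once that is granted, the compatibility with the group law and the closed-immersion property are formal consequences of the functoriality of $\mathcal{O}(-)$ and of the theta group, so I do not expect a substantial obstacle beyond keeping the pullback conventions straight.
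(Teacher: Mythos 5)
Your proposal is correct and follows the same route as the paper: the paper's proof is a one-line appeal to the $\mathcal{H}$-invariance of $D$ from Lemma \ref{AuxDivSuper} together with \cite[Proposition 3.10]{AP}, which is exactly the invariant-divisor/level-subgroup correspondence you unpack. Your explicit construction of the canonical isomorphisms $\phi_x$ and the verification of the cocycle condition is just the content of that cited proposition written out in full.
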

\begin{proof}
The $\mathcal{H}$-invariant divisor $D$ determines a level group in $\mathcal{G}(\mathcal{O}(D))$ by \cite[Proposition 3.10]{AP}.
\end{proof}
\begin{definition} Let $(D_1, D_2, \ldots, D_r)$ be a tuple of completely decomposed $\Theta_{\eta}$-divisors. For any $i\in \{1,\ldots, g\}$ denote by $\mathcal{H}_i$ the group scheme associated to $D_i$. We say that $(D_1, \ldots, D_r)$ is \emph{a spanning tuple of completely decomposed $\Theta_{\eta}$-divisors} if the map of commutative group schemes
$$\mathcal{H}_1 \times \ldots \times \mathcal{H}_r\rightarrow \ker(\eta)$$
induced from the inclusions $\mathcal{H}_i \subset \ker(\eta)$ is surjective and splits.
\end{definition}
\begin{exmpl}
For $g=2$ and $\ker(\eta)=E^2[F]$ completely decomposed $\Theta_{\eta}$-divisors are translates of reducible fibers of Moret-Bailly's family by \cite[Proposition 4.5, Corollary 4.6, Corollary 4.7]{AP}. This motivated their definition in the general case. Furthermore, Lemma \ref{AuxDivSuper} shows that completely decomposed $\Theta_{\eta}$-divisors have the same geometric interpretation in terms of products of elliptic curves generalizing the Moret-Bailly case to the general one.
\par In the Moret-Bailly case it is easy to see that a pair $(D_1,D_2)$ of completely decomposed $\Theta_{\eta}$-divisors is spanning if and only if $D_1\neq D_2$. In particular there exists always a spanning pair of completely decomposed $\Theta_{\eta}$-divisors in this case as a Moret-Bailly family has $5p-5\geqslant 2$ reducible fibers.
\end{exmpl}
\begin{rem}\label{ClassAuxDiv}
There is a classification of completely decomposed $\Theta_{\eta}$-divisors in terms of linear algebra over $\mathsf{O}$. This generalizes the results of \cite[Section 4.3]{AP}. In particular we have an algorithm computing all the completely decomposed $\Theta_{\eta}$-divisors for a given $\eta$. (The classification implies that this is a finite set.) We can also check whether a tuple of completely decomposed $\Theta_{\eta}$-divisors is spanning. See \cite[Section 8.4]{APThesis} for the details.
\end{rem}
\subsection{Rational equivalence of completely decomposed $\Theta_{\eta}$-divisors}\label{SecRatEquiv}
\begin{lem}\label{LemRatEq} Suppose $D_1, D_2$ are two completely decomposed $\Theta_{\eta}$-divisors. Then there is a 2-torsion point $P\in E^g[2]$ such that $t_P(D_1)\sim D_2$. There is an explicit formula for $P$ in terms of linear algebra over $\mathbb{F}_2$.
\end{lem}
\begin{proof}
Straightforward generalization of \cite[Lemma 4.15]{AP}. The proof works the same way.
\end{proof}
Furthermore, a rational function $\rho$ on $E^g$ such that $(\rho)=t_P(D_1)-D_2$ can be computed quickly by noticing that the whole situation can be described in terms of the geometry of elliptic curves. (See \cite[Section 8.6]{APThesis} for the details). It is worthwhile to mention that this method does not use Gr\"obner bases as the generic algorithm for computing such rational functions would do.
\section{Main results}\label{ChapMain}
\subsection{Notation}
We fix the following notation for the rest of the paper.
\begin{itemize}
\item $E/k$ is a supersingular elliptic curve defined over $\mathbb{F}_p$ such that the relative Frobenius satisfies $F^2+p=0$.
\item $\eta$ is a polarization on $E^g$ with connected kernel. Assume that there exists a spanning tuple of completely decomposed $\Theta_{\eta}$-divisors $D_1, \ldots, D_r$.
\item $\mathsf{O}=\text{End}(E)$.
\item There is an additive bijection
$$\gamma: \text{Mat}_{m,n}(\mathsf{O})\stackrel{\sim}{\rightarrow}\text{Hom}(E^n,E^m) $$
$$ \Phi=(\varphi_{ij})\mapsto \left(\psi:E^n\rightarrow E^m,\, (P_1,\ldots, P_n)\mapsto \left(\sum_{j=1}^n \varphi_{1j}(P_j)), \ldots, \sum_{j=1}^n \varphi_{mj}(P_j)\right)\right)$$
This bijection turns matrix multiplication into composition of maps. Notice that we use the convention that maps should be composed from right to left.
\item $\mu: E^g\rightarrow (E^g)^\vee$ denotes the natural product polarization.
\item There is a bijective map from the set of positive definite hermitian matrices in $\Mat_{g,g}(\mathsf{O})$ to the set of polarizations on $E^g$ given by
$$H\mapsto (\mu\circ \gamma(H):E^g\rightarrow (E^g)^\vee)$$
Denote by $H$ the preimage of $\eta$ under this bijection.
\item The letters $D, \mathcal{H},\tilde{\mathcal{H}}, A, \mathcal{L} $ will also have a fixed meaning which is explained below.
\end{itemize}
\subsection{Overview of the algorithm}
In this section we outline the main algorithm. The details of the individual steps will be given in the next sections. Beware that we anticipate some of the notation and results from the subsequent sections. Thus the reader might want to repeatedly come back here at a later stage.
\begin{itemize}[leftmargin=1.5cm]\label{AlgMain}
\item[\textbf{Input:}]
\begin{itemize}
\item[i)] The elliptic curve $E$. Assume that all the points of $E[8]$ are defined over $k$.
\item[ii)] An hermitian matrix $H\in M_{g,g}$ describing the polarization $\eta$. Assume that there exists a spanning tuple of completely decomposed $\Theta_{\eta}$-divisors $D_1, \ldots, D_r$.
\item[iii)] The spanning tuple will be given as an input by specifying for each $D_j$ a tuple of matrices $H_i\in M_{g,g}(\mathsf{O}),\, i=1,\ldots, g$ (see \cite[Proposition 8.6]{APThesis}).
\item[iv)]  A maximally isotropic Dieudonn\'e submodule $M(\mathcal{H})\subset M(\ker(\eta))$.
\end{itemize}
\item[\textbf{Output:}] The level $4$ theta nullvalues $\theta_{a,b}$ of $A=E^g/\mathcal{H}$.
\item[\textbf{Method:}]
\item[(1)] Compute a basis $x_1, \ldots, x_g, y_1, \ldots, y_g$ of $E^g[4](k)$ symplectic with respect to the pairing of Lemma \ref{LemTwoTorsTheta}.
\item[(2)] Compute a $2$-torsion point $P\in E^g[2](k)$ such that $e_*^{t_P^{-1}(D_r)}$ is in normal form with respect to the basis $2x_1, \ldots, 2x_g, 2y_1, \ldots, 2y_g$ of $E^g[2](k)$.
\item[(3)] Compute the rational functions $\rho_{x_i}$ from Theorem \ref{ThmTwoTorsLevel} (and the analogously defined rational functions for the other level group $\tilde{V}_2$).
\item[(4)] Compute with Lemma \ref{LemRatEq} the $2$-torsion points $P_1, \ldots, P_r \in E^g[2](k)$ such that
$$t_{P_i}^{-1}(D_i) \sim t_P^{-1}( D_r)\stackrel{\text{def.}}{=} D\,.$$
\item[(5)] Compute rational functions $\rho_i, i=1, \ldots, r-1$ such that
$$(\rho_i)= t_{P_i}^{-1}(D_i)-D$$
\item[(6)] Use Theorem \ref{ThmInvConn} to calculate $\rho_{\tilde{\mathcal{H}}}$, a generator of the one-dimensional vector space
$$\HH^0(E^g, \mathcal{O}(D))^{\tilde{\mathcal{H}}}\,.$$
(This will use the the rational functions $\rho_i$ to compute the action of the connected level group.)
\item[(7)] Compute $\theta_{a,b}$ from $\rho_{\tilde{\mathcal{H}}}$ via Theorem \ref{ThmThetaNull}. (In this step we use the rational functions $\rho_{x_i}$ to compute the action of the \'etale level groups.)
\item[\textbf{Return:}] $\theta_{a,b}$.
\end{itemize}
\phantom{}
\subsection{Construction of \'etale level groups}\label{SecEtaleLevGr}
Let $N$ be any positive even integer with $p \nmid N$. Later we will apply the results of this section with $N$ equal to $4$. Assume that all the $2N$-torsion points of $E$ are defined over $k$. The existence of the Weil pairing implies that $k$ contains a primitive $N$-th root of unity. Choose and fix such a root of unity $\zeta_N\in k$. This determines an isomorphism $\mu_N\cong \underline{\mathbb{Z}/N\mathbb{Z}}$.
\par We will give an explicit description of two level subgroups in $\mathcal{G}(\mathcal{O}(ND))$ of exponent $N$. Here $D$ will be a suitable translate of $D_r$. These level subgroups will later be used to construct a $\Gamma(4,8)$-level structure on the quotient.
\begin{lem}\label{LemTwoTorsTheta} $K(\mathcal{O}(N D_r))[N]=E^g[N]$. The restriction of the commutator pairing is given by $[x,y]=(x, \gamma(H)(y))$, where $(\cdot, \cdot)$ is the pairing on $E^g[N]$ induced from the Weil pairing on $E[N]$.
\end{lem}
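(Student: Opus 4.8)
The plan is to compute $K(\mathcal{O}(ND_r))$ and the associated commutator pairing by reducing to the product polarization via the definition of a completely decomposed $\Theta_\eta$-divisor. Write $D_r = \sum_{i=1}^g p^{n_i} A_i$ and let $\xi = (\xi_1,\ldots,\xi_g)\colon E^g \to E_1\times\cdots\times E_g$ be the associated isogeny with $E_i = E^g/A_i$, so that (up to the translation implicit in the passage from $D_r$ to $D$) one has $\mathcal{O}(D_r) \cong \xi^*\mathcal{N}$ where $\mathcal{N} = \boxtimes_i \mathcal{O}_{E_i}(0_{E_i})$ is the line bundle inducing the product principal polarization $\mu'$ on $\prod_i E_i$. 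Hence $\mathcal{O}(ND_r) \cong \xi^*(\mathcal{N}^N)$, and the standard functoriality of theta groups under isogenies (Mumford, as recalled implicitly in Chapter~\ref{ChapThetaGr}) gives $K(\mathcal{O}(ND_r)) = \xi^{-1}\big(K(\mathcal{N}^N)\big)$ together with a compatibility of commutator pairings.

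First I would compute the right-hand side for the product. Since $\mathcal{N}$ is a principal polarization, $K(\mathcal{N}^N) = (\prod_i E_i)[N]$ by Lemma~\ref{KMultn} applied factor-by-factor, and the commutator pairing on $K(\mathcal{N}^N)$ is the Weil pairing $e^N$ attached to the principal polarization $\mu'$, i.e. the product of the Weil pairings on the $E_i[N]$. Pulling back along $\xi$: a point $x\in E^g(\overline k)$ lies in $K(\mathcal{O}(ND_r))$ iff $\xi(x)\in (\prod E_i)[N]$, i.e. iff $N\xi(x)=0$, i.e. iff $\xi(Nx)=0$. Now the crucial point is to identify $\xi$ with the polarization isogeny: since $D_r$ induces $\eta = \mu\circ\gamma(H)$ and also $D_r = \xi^*(\text{theta divisor})$, one gets (after the translation, which does not affect the induced map to the dual) that $\gamma(H)$ factors as $\xi^\vee\circ(\text{p.p. of }\prod E_i)\circ\xi$ up to identifications, so in particular $\ker(\gamma(H)) = \ker\xi$ and more generally $\{x : \xi(Nx)=0\} = \gamma(H)^{-1}(\text{something of exponent }N)$. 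Restricting to $N$-torsion: $K(\mathcal{O}(ND_r))[N] = \{x\in E^g[N] : \xi(x)\in\ker(\text{p.p.})\cdot(\prod E_i)[N]\}$; since the p.p. isogeny is an isomorphism this is just $\{x\in E^g[N]\}$ once we observe $\xi(E^g[N])\subseteq (\prod E_i)[N]$ automatically (isogenies preserve $N$-torsion). Thus $K(\mathcal{O}(ND_r))[N] = E^g[N]$.

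For the pairing formula, I would transport the Weil pairing through $\xi$: for $x,y\in E^g[N]$ the commutator pairing is $[x,y] = e^N_{\prod E_i}(\xi(x),\xi(y))$, and writing $\xi$ in terms of $\gamma(H)$ via the factorization above, the adjoint relation for the Weil pairing under a polarization isogeny (the Weil pairing on $K(\phi)$ attached to a polarization $\phi = \psi^\vee\circ\lambda_0\circ\psi$ transports to $e(x,\phi(y))$ with $e$ the canonical pairing) collapses this to $[x,y] = (x,\gamma(H)(y))$ where $(\cdot,\cdot)$ is the pairing on $E^g[N]$ assembled from the Weil pairing on $E[N]$. Here I would use that $\gamma$ turns matrix multiplication into composition and that $H$ is hermitian, so that $\gamma(H)$ is self-adjoint for $\mu$ and the expression $(x,\gamma(H)(y))$ is well-defined and alternating modulo the usual sign; the hermitian property of $H$ is exactly what guarantees $\gamma(H)$ induces a polarization, and hence that the restriction of the commutator pairing has this shape.

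The main obstacle I anticipate is bookkeeping the translation: $D$ is a \emph{translate} of $D_r$ by a $2$-torsion point (as in Lemma~\ref{RatEq}), and one must check this translation changes neither $K(\mathcal{O}(ND))$ nor the commutator pairing. This is true because translating a divisor by a point $P$ replaces $\mathcal{O}(D_r)$ by $t_P^*\mathcal{O}(D_r)$, which induces the same polarization and hence the same $K(\cdot)$, and the isomorphism $\mathcal{G}(\mathcal{O}(D_r))\cong\mathcal{G}(t_P^*\mathcal{O}(D_r))$ is compatible with the commutator pairings; I would state this as a short lemma or fold it into the argument. The remaining subtlety is making the identification "$\gamma(H) = \xi^\vee\circ\lambda_0\circ\xi$" precise with the correct normalizations — but this is forced by the fact that both $\mu\circ\gamma(H)$ and $\xi^*(\text{theta divisor})$ induce $\eta$, together with the uniqueness of the polarization attached to a divisor, so no genuine difficulty arises beyond careful tracking of the maps.
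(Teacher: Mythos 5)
Your route is genuinely different from the paper's, which disposes of the lemma in two lines: $E^g[N]=[N]^{-1}(0)\subseteq [N]^{-1}(K(\mathcal{O}(D_r)))=K(\mathcal{O}(ND_r))$ by Lemma~\ref{KMultn}, and the pairing formula is a direct citation of Mumford's formulas (4), (5) \cite[p.~211]{MumAV}, which identify the restriction of the commutator pairing of $\mathcal{O}(ND_r)$ to $E^g[N]$ with $e_N(x,\eta(y))$; since $\eta=\mu\circ\gamma(H)$ and $(\cdot,\cdot)=e_N(\cdot,\mu(\cdot))$ this is $(x,\gamma(H)(y))$. You instead re-derive these facts by realizing $D_r$ as a pullback from a product of elliptic curves and transporting the product Weil pairing back along the isogeny — essentially the geometric content of Lemma~\ref{AuxDivSuper}. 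This is workable but buys nothing here: the lemma depends only on the polarization $\eta$ induced by $D_r$ (both $K$ and the commutator pairing depend only on $\lambda_{\mathcal{L}}$), so complete decomposedness plays no role.

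Two steps of your argument need repair. First, with $E_i=E^g/A_i$ one has $\xi^*\mathcal{N}=\mathcal{O}\bigl(\sum_i A_i\bigr)$, not $\mathcal{O}\bigl(\sum_i p^{n_i}A_i\bigr)$; to realize $D_r$ as the preimage of the product theta divisor you must compose each $\xi_i$ with the further quotient $E_i\to E_i/E_i[F^{n_i}]$ (equivalently, use the quotient $E^g\to E^g/\mathcal{H}_r$ of Lemma~\ref{AuxDivSuper}), and in the cases of interest the $n_i$ are not all zero. Second, the identity $K(f^*M)=f^{-1}(K(M))$ for an isogeny $f$ is false: $\lambda_{f^*M}=f^\vee\circ\lambda_M\circ f$ gives $K(f^*M)=f^{-1}\bigl(\lambda_M^{-1}(\ker f^\vee)\bigr)$, which strictly contains $f^{-1}(K(M))$ whenever $f$ is not an isomorphism. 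Only the inclusion $f^{-1}(K(M))\subseteq K(f^*M)$ holds; fortunately that inclusion, restricted to $N$-torsion, is all you actually use, and the functoriality $e^{f^*M}(x,y)=e^M(f(x),f(y))$ that you invoke is valid on $f^{-1}(K(M))$, so the conclusion survives. Your worry about the translate $D$ versus $D_r$ is moot, as the lemma is stated for $D_r$ itself.
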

\begin{proof}
$K(\mathcal{O}(N D_r))[N]=E^g[N]$ follows from Lemma \ref{KMultn}. To obtain the commutator pairing use \cite[p. 211, formulas (4), (5)]{MumAV}.
\end{proof}
We are now ready to explain the construction of two level groups:
\par Choose two disjoint subspaces $V_1,V_2\subset E^g[N](k)$ maximally isotropic with respect to the pairing from \ref{LemTwoTorsTheta}. Choose a basis $x_1, \ldots, x_g$ of $V_1$. Denote the dual basis of $V_2$ by $y_1, \ldots, y_g$. Choose points $x_i',\, y_i'\in E^g[2N](k)$ with $x_i=2 x_i',\, y_i=2 y_i'$.
\par Since $N$ is even, the $x_1,\ldots, x_g, y_1,\ldots, y_g$ also determine a basis of $E^g[2](k)$. Because $2\nmid \deg(D)$, the same argument as in Lemma \ref{Gamma48Can} shows the existence of a 2-torsion point $P\in E^g[2](k)$ such that $e_*^{t_P^{-1}(D_r)}$ is given by the matrix
$$\begin{pmatrix}
0 & I_g\\
0 & 0
\end{pmatrix}$$
with respect to our basis of $E^g[2](k)$. Put $D=t_P^{-1}(D_r)$.
\par On the other hand the divisors $2D$ and $t_{-x_i'-jx_i}^*(D)+t_{x_i'+jx_i}^*(D)$ (resp. $t_{-y_i'-j y_i}^*(D)+t_{y_i'-j y_i}^*(D)$) are rationally equivalent for $i=1,\ldots, g$ and $j=1, \ldots, \frac{N}{2}$ by the theorem of the square. Choose rational functions $\rho_{x_i,j}$ such that $(\rho_{x_i,j})= t_{-x_i'-jx_i}^*(D)+t_{x_i'+jx_i}^*(D) -2D$, resp. $\rho_{y_i,j}$ such that $(\rho_{y_i,j})= t_{-y_i'-jy_i}^*(D)+t_{y_i'+jy_i}^*(D) -2D$.
\begin{thm}\label{ThmTwoTorsLevel} 
In the notation preceding the proposition we have:
\begin{itemize}
\item[i)] If we define
$$\rho_{x_i}=\prod_{j=1}^{\frac{N}{2}}\rho_{x_i,j}^{-1} t_{x_i}^*(\rho_{x_i,j})$$
then multiplication by $\rho_{x_i}$ gives an isomorphism $\mathcal{O}(N D)\rightarrow t_{x_i}^*\mathcal{O}(ND)$ denoted $\phi_{x_i}$.
\item[ii)] The elements $(x_i, \phi_{x_i})\in \mathcal{G}(\mathcal{O}(ND))(k)$ generate a level subgroup denoted $\tilde{V_1}$.
\item[iii)] For all $g\in \tilde{V_1}$ one has $\delta_{-1}(g)=g^{-1}$.
\item[iv)] Similarly we get a level subgroup $\tilde{V_2}$ with the analogous properties.
\end{itemize}
\end{thm}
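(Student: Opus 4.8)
Parts (i)--(iii) construct $\tilde V_1$; part (iv) is the verbatim analogue with $y_i,y_i',V_2$ in place of $x_i,x_i',V_1$, so I only discuss (i)--(iii). \emph{Part (i).} The plan is to compute the principal divisor $(\rho_{x_i})$ directly. Put $a_j=x_i'+jx_i$; the rational equivalence $t_{-a_j}^*D+t_{a_j}^*D\sim 2D$ is the theorem of the square, so $(\rho_{x_i,j})=t_{-a_j}^*D+t_{a_j}^*D-2D$. Writing $R=\prod_{j=1}^{N/2}\rho_{x_i,j}$ one has $\rho_{x_i}=R^{-1}\,t_{x_i}^*R$, hence
$$(\rho_{x_i})=t_{x_i}^*(R)-(R)=\sum_{j=1}^{N/2}\bigl(t_{x_i}^*(\rho_{x_i,j})-(\rho_{x_i,j})\bigr).$$
Since $t_b^*t_c^*=t_{b+c}^*$ and $x_i-a_j=-a_{j-1}$, $x_i+a_j=a_{j+1}$, the $j$-th summand is $t_{-a_{j-1}}^*D-t_{-a_j}^*D+t_{a_{j+1}}^*D-t_{a_j}^*D+2D-2t_{x_i}^*D$; after summation the constant part is $N(D-t_{x_i}^*D)$ and the translate part telescopes to the boundary divisor $B:=t_{-x_i'}^*D-t_{-a_{N/2}}^*D+t_{a_{N/2+1}}^*D-t_{a_1}^*D$. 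It remains to show $B=0$: using $2x_i'=x_i$, $Nx_i=0$ and $2Nx_i'=0$ the four translation vectors collapse pairwise modulo $E^g[2]$, and I would deduce the required equality of divisors from the fact that $D$ is a $2$-torsion translate of a completely decomposed $\Theta_\eta$-divisor whose quadratic form $e_*^D$ has been normalized by the choice of $P$ -- this is exactly what the auxiliary data $P$ and the $2N$-torsion lifts $x_i'$ were introduced for. Granting $B=0$, $(\rho_{x_i})=ND-t_{x_i}^*(ND)$, so multiplication by $\rho_{x_i}$ is an isomorphism $\mathcal O(ND)\xrightarrow{\sim}t_{x_i}^*\mathcal O(ND)$; note $\rho_{x_i}$ is insensitive to the scaling ambiguity in the $\rho_{x_i,j}$, so $\phi_{x_i}$ is well defined.

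\emph{Part (ii).} First, the $(x_i,\phi_{x_i})$ pairwise commute: their images span $V_1\subseteq E^g[N]=K(\mathcal O(ND))[N]$, and by Lemma~\ref{LemTwoTorsTheta} the commutator $[(x_i,\phi_{x_i}),(x_j,\phi_{x_j})]\in\mathbb G_m$ equals the pairing value $(x_i,\gamma(H)x_j)$, which is $1$ since $V_1$ is isotropic. Second, each $(x_i,\phi_{x_i})$ has order exactly $N$: its $N$-th power lies over $Nx_i=0$ and hence is multiplication by
$$\prod_{t=0}^{N-1}t_{tx_i}^*\rho_{x_i}=\prod_{t=0}^{N-1}\bigl(t_{tx_i}^*R\bigr)^{-1}t_{(t+1)x_i}^*R=\bigl(t_{Nx_i}^*R\bigr)/R=1.$$
Therefore the $(x_i,\phi_{x_i})$ generate a subgroup scheme $\tilde V_1\cong\underline{(\mathbb Z/N\mathbb Z)^g}$ that meets $\mathbb G_m$ trivially and maps isomorphically onto $V_1$, i.e.\ a level subgroup.

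\emph{Part (iii).} As $D_r$ is a sum of abelian subvarieties it is symmetric, and since $2P=0$ the divisor $D=t_P^{-1}(D_r)$ satisfies $[-1]^*D=D$; thus $[-1]^*\mathcal O(ND)=\mathcal O(ND)$ and the normalized isomorphism $\psi$ is the identity. From $[-1]^*\bigl(t_{-a_j}^*D+t_{a_j}^*D-2D\bigr)=t_{a_j}^*D+t_{-a_j}^*D-2D$ we get $[-1]^*\rho_{x_i,j}=c_j\rho_{x_i,j}$ with $c_j\in k^\times$, so $[-1]^*R/R$ is a constant, whence
$$[-1]^*\rho_{x_i}\cdot t_{-x_i}^*\rho_{x_i}=t_{-x_i}^*\bigl([-1]^*R/R\bigr)\cdot\bigl(R/[-1]^*R\bigr)=1.$$
Unwinding the definitions of $\delta_{-1}$ and of inversion in $\mathcal G(\mathcal O(ND))$, this is exactly $\delta_{-1}(x_i,\phi_{x_i})=(x_i,\phi_{x_i})^{-1}$; since $\delta_{-1}$ is multiplicative and $\tilde V_1$ is abelian, $\delta_{-1}(g)=g^{-1}$ for every $g\in\tilde V_1$.

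\emph{Part (iv)} follows by replacing $x_i,x_i',V_1$ with $y_i,y_i',V_2$ throughout, since the arguments above used only that $\{x_i\}$ is a basis of a maximal isotropic subspace of $E^g[N]$, that $x_i=2x_i'$, and that $e_*^D$ is normalized in the joint basis. The main obstacle is the vanishing of the boundary divisor $B$ in (i): it is the single place where the precise arithmetic of the $2N$-torsion representatives and the normalization of $P$ must be exploited, and a careless choice there leaves an unwanted nontrivial $2$-torsion translate of $D$ in $(\rho_{x_i})$, destroying the isomorphism.
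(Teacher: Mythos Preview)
Your arguments for (ii) and (iii) track the paper's: commutation follows from $V_1$ being isotropic, and symmetry of the $\rho_{x_i,j}$ gives $\delta_{-1}(g)=g^{-1}$ (your version, allowing $[-1]^*\rho_{x_i,j}=c_j\rho_{x_i,j}$ with the constant cancelling in $R^{-1}t_{x_i}^*R$, is in fact slightly more careful than the paper's bare assertion $c_j=1$).

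The substantive difference is in (i). The paper does \emph{not} compute $(\rho_{x_i})$ by telescoping. It first proves (ii): it rewrites $D'':=\sum_j\bigl(t_{-a_j}^*D+t_{a_j}^*D\bigr)$ as the full orbit sum $\sum_{m\bmod N}t_{a_m}^*D$, which is visibly $\langle x_i\rangle$-invariant, and then invokes \cite[Proposition~3.10]{AP} to obtain a level subgroup over $\langle x_i\rangle$ from this invariant divisor. Part (i) is read off afterwards, since the lift of $x_i$ in that level group is exactly multiplication by $R^{-1}t_{x_i}^*R$. Your telescoping is the same computation from the other end: your boundary $B$ measures precisely the failure of $D''$ to be $x_i$-invariant.

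Your proposed resolution of $B=0$, however, does not work. You observe that the four translation vectors in $B$ pair off modulo $E^g[2]$ (each pair differs by $(N/2)x_i$) and then appeal to the normalization of $e_*^D$. But $e_*^D$ controls rational equivalence classes of $2$-torsion translates, not equality of divisors; for $t_{(N/2)x_i}^*D=D$ to hold one would need $(N/2)x_i$ in the stabilizer of $D$, which is contained in the \emph{connected} group $\ker(\eta)$ and hence contains no nontrivial $2$-torsion. So $B\neq 0$ by this route, and the choice of $P$ does not help.

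What actually makes $B$ vanish is the identity $-a_j=a_{N-j-1}$, coming only from $2x_i'=x_i$ and $Nx_i=0$. With the index range $j=0,\dots,N/2-1$ one has $\{a_j,-a_j:0\le j\le N/2-1\}=\{a_m:m\bmod N\}$ exactly, so $D''$ is the full orbit sum and your telescoping gives $B=0$ with no further input. With the stated range $j=1,\dots,N/2$ this already fails for $N=4$ (one picks up $a_1,a_2$ twice and misses $a_0,a_3$). So the obstacle you flagged is real, but its source is an index shift, not the quadratic form $e_*^D$; once the product in (i) runs over $j=0,\dots,N/2-1$, both your argument and the paper's orbit-sum rewriting go through without residue.
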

\begin{proof}
For ii) notice denote by $K_i$ the subgroup of $E^g[N]$ generated by $x_i$. The divisor
$$\sum_{j=1}^{\frac{N}{2}} (t_{-x_i'-jx_i}^*(D)+t_{x_i'+jx_i}^*(D))=\sum_{j=1}^{N} t_{x_i'+jx_i}^*(D)$$
 is $K_i$ invariant. By \cite[Proposition 3.10]{AP} we obtain a level subgroup above $K_i$. Furthermore, these level groups will commute with each other as $V_1$ is isotropic. Therefore these level groups will generate a level group denoted $\tilde{V_1}$. This proves ii)
\par It is easy to see that the elements of the level group $\tilde{V_1}$ have a description as in i). This proves i).
\par One can check with a small computation that the rational functions $\rho_{x_i,j}$ are symmetric, i.e. satisfy $[-1]^*\rho_{x_i,j}=\rho_{x_i,j}$. This implies iii) by unraveling the definition of $\delta_{-1}$.
\par Part iv) is clear.
\end{proof}
\begin{cor} The action of $\tilde{V_1}$ on $\HH^0(E^g, \mathcal{O}(ND)$ is given by
$$U_{(x_i, \phi_{x_i})} (\rho)=\rho_{x_i} t_{x_i}^*(\rho_{x_i}^{-1} \rho)$$
\end{cor}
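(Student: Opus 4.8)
The statement is purely a matter of unwinding the definition of the action of $\mathcal{G}(\mathcal{O}(ND))$ on $\HH^0(E^g,\mathcal{O}(ND))$ against the explicit description of $\phi_{x_i}$ furnished by Theorem~\ref{ThmTwoTorsLevel}(i). Recall that for a $k$-point $(x,\phi)$ of $\mathcal{G}(\mathcal{O}(ND))$ the operator $U_{(x,\phi)}$ is the composite $\HH^0(E^g,\mathcal{O}(ND))\xrightarrow{\phi}\HH^0(E^g,t_x^*\mathcal{O}(ND))\xrightarrow{t_{-x}^*}\HH^0(E^g,\mathcal{O}(ND))$. The plan is to compute this composite after choosing concrete realizations of the three vector spaces involved.

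First I would fix the tautological rational section $\mathbf{1}$ of $\mathcal{O}(ND)$, whose divisor is $ND$; this identifies $\HH^0(E^g,\mathcal{O}(ND))$ with the Riemann--Roch space $L(ND)$ of rational functions $\rho$ with $(\rho)+ND\geqslant 0$, and, pulling $\mathbf 1$ back along $t_{x_i}$, identifies $\HH^0(E^g,t_{x_i}^*\mathcal{O}(ND))$ with $L(t_{x_i}^*(ND))$. Under these identifications the pullback map $t_{x_i}^*$ on sections is precomposition with the translation $t_{x_i}$, while, by Theorem~\ref{ThmTwoTorsLevel}(i), $\phi_{x_i}$ is multiplication by the rational function $\rho_{x_i}$, i.e.\ $\rho\mapsto\rho_{x_i}\rho$. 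It then remains to apply the second leg $t_{-x_i}^*$: since $t_{-x_i}^*\circ t_{x_i}^*=\mathrm{id}$, this carries the pulled-back tautological section back to $\mathbf 1$ and acts on the rational function by the corresponding translation. Substituting $\phi_{x_i}(\rho)=\rho_{x_i}\rho$ and regrouping the resulting product so as to separate the part that depends only on $\rho_{x_i}$ from the genuinely translated copy of $\rho$ yields exactly $\rho_{x_i}\,t_{x_i}^*(\rho_{x_i}^{-1}\rho)$. A useful sanity check is that specialising to $\rho$ a constant reproduces $\phi_{x_i}(\mathbf 1)=\rho_{x_i}$, and that the formula is compatible with $\delta_{-1}((x_i,\phi_{x_i}))=(x_i,\phi_{x_i})^{-1}$ from Theorem~\ref{ThmTwoTorsLevel}(iii).

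The one genuinely delicate point — and the one I would be most careful about — is the bookkeeping of the translation conventions throughout: one must stay consistent about whether $t_c$ denotes translation by $+c$, and about the induced action on Cartier divisors ($t_c^*D=t_c^{-1}(D)$), on line bundles, and on rational functions, so that the cancellation $t_{-x_i}^*t_{x_i}^*=\mathrm{id}$ is applied in the right place and no spurious translate or sign survives. In particular one should record the divisor identity $(\rho_{x_i})-t_{x_i}^*((\rho_{x_i}))=t_{x_i}^*(ND)-ND$, which is exactly what makes $\rho\mapsto\rho_{x_i}t_{x_i}^*(\rho_{x_i}^{-1}\rho)$ an automorphism of $L(ND)$, and which in turn follows from the $\langle x_i\rangle$-invariance of $\sum_{j=1}^{N}t_{x_i'+jx_i}^*(D)$ established in the proof of Theorem~\ref{ThmTwoTorsLevel}(ii). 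As an alternative to the direct computation, one may instead observe that $\rho\mapsto\rho_{x_i}t_{x_i}^*(\rho_{x_i}^{-1}\rho)$ manifestly defines an automorphism of $L(ND)$ lying over translation by $x_i$ and compatible with the $\mathcal{O}_{E^g}$-module structure, hence an element of $\mathcal{G}(\mathcal{O}(ND))(k)$ above $x_i$, and then identify this element with $(x_i,\phi_{x_i})$ by matching the associated isomorphisms $\mathcal{O}(ND)\to t_{x_i}^*\mathcal{O}(ND)$ — both being multiplication by $\rho_{x_i}$. Everything is routine once the material of Section~\ref{ChapThetaGr} is in hand, in complete analogy with the corresponding statements in \cite{AP}.
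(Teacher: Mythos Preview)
Your proposal is correct and is precisely the paper's approach: the paper's own proof is the single word ``Clear.'', and your argument is nothing more than an explicit unwinding of the definition of $U_{(x,\phi)}$ together with the description of $\phi_{x_i}$ as multiplication by $\rho_{x_i}$ from Theorem~\ref{ThmTwoTorsLevel}(i). Your caution about the translation bookkeeping is well placed---that is indeed the only point where one can slip---but there is no additional idea beyond chasing the definitions.
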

\begin{proof}
Clear.
\end{proof}
Therefore we can fully understand the action of the level groups $\tilde{V_1},\tilde{V_2}$ provided we can compute the rational functions $\rho_{x_i,j},\, \rho_{y_i,j}$. These rational functions can be computed alongside the rational functions of subsection \ref{SecRatEquiv}. See \cite[Section 8.4]{APThesis} for more details.
\subsection{Setting up the Dieudonn\'e modules}\label{SecSettDieu}
Choose an $n$ such that $\ker(\eta)\subseteq E^g[F^n]$. Then all the group schemes we need to consider are killed by $F^n, V^n$. Therefore we can work with the Dieudonn\'e modules $M_{n,n}(\cdot)$. For shorthand notation we will omit $n,n$ in the index.
\begin{lem}\label{WittSurj}
There exists a surjection of group schemes over $\mathbb{F}_p$
$$W_{n,n}\twoheadrightarrow E[F^n]$$
\end{lem}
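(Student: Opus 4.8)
The plan is to reduce the statement to a cyclicity property of a single Dieudonné module and then to an explicit dimension count, using the covariant theory set up above. Work over $\mathbb{F}_p$ throughout, so that the Frobenius twists implicit in the formalism of $M_{n,n}$ disappear. Since $M_{n,n}$ is an exact equivalence and $M_{n,n}(W_{n,n})=D_{n,n}$, a surjection $W_{n,n}\twoheadrightarrow E[F^n]$ of group schemes over $\mathbb{F}_p$ exists as soon as the $D_{n,n}$-module $M:=M_{n,n}(E[F^n])$ is \emph{cyclic}, i.e.\ generated by one element: a generator $m_0$ gives a surjection of $D_{n,n}$-modules $D_{n,n}=M_{n,n}(W_{n,n})\twoheadrightarrow M$, $d\mapsto d\cdot m_0$, and applying the inverse equivalence produces an epimorphism $W_{n,n}\to E[F^n]$ in the abelian category of finite commutative group schemes over $\mathbb{F}_p$, which is the same as a faithfully flat, hence scheme-theoretically surjective, morphism. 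So everything comes down to showing that $M$ is cyclic.

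The next step is to pin down the module structure of $M$. From the standard relation $FV=VF=p$ together with the hypothesis $F^2+p=0$ one gets $F(F+V)=0$, whence $V=-F$ in $\mathsf{O}=\End(E)$ since $F$ is an isogeny; consequently $p=FV=-F^2$ on $E$, and $E[F^n]=E[V^n]$ is killed by $F^n$ and $V^n$, so that $M$ is a genuine $D_{n,n}$-module, of $\mathbb{Z}_p$-length $n=\log_p|E[F^n]|$. Tracing the $D_{n,n}$-action on $M=\Hom_{\mathbb{F}_p}(W_{n,n},E[F^n])$ through the functoriality of Frobenius and Verschiebung (this is where one uses that the base is $\mathbb{F}_p$, so $\phi^{(p)}=\phi$), one finds that $F$ acts on $M$ as post-composition with $V_{E[F^n]}=-F_{E[F^n]}$ and that $p$ acts as post-composition with $[p]_{E[F^n]}=-F_{E[F^n]}^{2}$; hence the relation $F^2=-p$ holds as operators on $M$. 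Therefore the subring $R\subseteq\End_{\mathbb{Z}_p}(M)$ generated by (the image of) $p$ and $F$ is a quotient of $\mathbb{Z}_p[T]/(T^2+p,\,T^n)=\mathcal{O}_K/\mathfrak{m}_K^{\,n}$, where $K=\mathbb{Q}_p(\sqrt{-p})$ and $\mathcal{O}_K$ is its ring of integers, a DVR totally ramified over $\mathbb{Z}_p$ with uniformizer $\pi=F$; so $M$ is a finite module over the chain ring $\mathcal{O}_K/\mathfrak{m}_K^{\,n}$.

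The only genuinely geometric input, and the step I expect to be the crux, is the computation $\dim_{\mathbb{F}_p}\ker(F\mid M)=1$. Since $F$ acts on $M$ as $-F_{E[F^n]}$ and $\ker(F_E)\subseteq E[F^n]$, one has $\ker(F\mid M)=\Hom_{\mathbb{F}_p}(W_{n,n},\ker F_E)=M_{n,n}(E[F])$; and $E[F]=\ker F_E$ is a connected group scheme of order $p$, so $M_{n,n}(E[F])$ has $\mathbb{Z}_p$-length $1$, i.e.\ is one-dimensional over $\mathbb{F}_p$. Over the chain ring $\mathcal{O}_K/\mathfrak{m}_K^{\,n}$ every finite module is a direct sum $\bigoplus_i\mathcal{O}_K/\mathfrak{m}_K^{\,a_i}$ with $a_i\le n$, and the kernel of multiplication by $\pi$ on such a sum has $\mathbb{F}_p$-dimension equal to the number of nonzero summands; one-dimensionality of $\ker(F\mid M)$ therefore forces $M\cong\mathcal{O}_K/\mathfrak{m}_K^{\,n}$, in particular cyclic, and the first paragraph yields the desired surjection. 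Beyond this count the only delicate points are bookkeeping: respecting the covariant conventions and the $F\leftrightarrow V$ switch in $D_{n,n}\to\End(W_{n,n})^{\mathrm{op}}$ when computing the action on $M$, and noting that it is precisely the reduction to $\mathbb{F}_p$ that trivializes the Frobenius twists --- which is why the statement is asserted over $\mathbb{F}_p$ rather than over $k$.
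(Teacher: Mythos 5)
Your argument is correct, and the first reduction --- cyclicity of $M=M_{n,n}(E[F^n])$ as a $D_{n,n}$-module implies the surjection via $D_{n,n}=M_{n,n}(W_{n,n})\twoheadrightarrow M$ and the exact equivalence --- is exactly the paper's. Where you diverge is in the proof of cyclicity. The paper observes that $D_{n,n}$ is a local ring with maximal ideal $(F,V)$, computes the one-dimensional cotangent space $M/(F,V)M\cong\Hom(E[F^n],\alpha_p)\cong k$ by Cartier duality (in essence, the $a$-number of a supersingular elliptic curve is $1$), and invokes Nakayama. You instead exploit the special relation $F^2+p=0$ to get $V=-F$, identify $M$ as a module over the chain ring $\mathcal{O}_K/\mathfrak{m}_K^{\,n}$ with $K=\mathbb{Q}_p(\sqrt{-p})$, compute that $\ker(F\mid M)=M_{n,n}(E[F])$ is one-dimensional, and conclude from the structure theorem for modules over a discrete valuation ring. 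The two one-dimensionality inputs are dual to each other and both encode the $a$-number; but the paper's Nakayama argument is shorter, needs no commutativity, and would work for any connected group scheme with one-dimensional $\Hom(\cdot,\alpha_p)$, whereas your route uses the hypothesis $F^2+p=0$ essentially and in exchange delivers strictly more, namely the full module structure $M\cong\mathcal{O}_K/\mathfrak{m}_K^{\,n}$, which also makes the explicit computation of the surjection (cf.\ the remark following the lemma) more transparent. Your bookkeeping of the covariant conventions (the $F\leftrightarrow V$ switch and the disappearance of Frobenius twists over $\mathbb{F}_p$) is accurate and is precisely why the statement is formulated over $\mathbb{F}_p$.
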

\begin{proof}
Denote by $M=M(E[F^n])$. This is a $D_{n,n}$-module. But $D_{n,n}$ is a local ring with maximal ideal $(F,V)$. On the other hand
$$\frac{M}{(F,V)M}\cong \Hom(E[F^n],  \alpha_p)\cong\Hom(\alpha_p, E[V^n])\cong k$$
where in the second equation we used Cartier duality. Thus Nakayama's lemma implies that $M$ is generated as a $D_{n,n}$-module by one element. Therefore there is a surjection $D_{n,n} \twoheadrightarrow M$. Since $M(\cdot)$ is fully faithful, the lemma follows.
\end{proof}
\begin{rem}
There is an algorithm that computes a surjection as in the previous lemma. The idea of the construction is to lift the elliptic curve $E$ to an elliptic curve $\mathcal{E}/\mathbb{Z}_p$. The formal group $\hat{\mathcal{E}}$ is a Lubin-Tate formal group. This implies that $\hat{\mathcal{E}}$ is isomorphic to a formal group  $\widehat{ \text{LT}}$ that can be explicitly described using the Witt vector construction. Using the logarithm we can compute an isomorphism $\widehat{\text{LT}}\cong \hat{\mathcal{E}} $ with suitable precision. Reducing mod $p$ gives a surjection $W_{n,n}\twoheadrightarrow E[F^n]$. For the details see \cite[Section 6.3]{APThesis}.
\end{rem}
Let us apply this remark and compute a surjection $W_{n,n}\rightarrow E[F^n]$ for some $n$ such that $\ker(\eta)\subseteq E^g[F^n]$. The surjection $W_{n,n}\rightarrow E[F^n]$ determines an element $\delta\in M(E[F^n])$. The Dieudonn\'e module $M(E[F^n])$ is generated as a $W(k)$ module by $\delta, F\delta$.
\par On the other hand the endomorphism algebra $\mathsf{O}$ acts on $M(E[F^n])$. The action on the generators $\delta, F\delta$ can be made explicit (see \cite[Proposition 6.22]{APThesis} for the details). We can calculate a $\mathbb{Z}$-linear map
$$\Psi: \mathsf{O}\rightarrow \Mat_{2,2}(W(k))$$
such that for all $\alpha\in \mathsf{O}$ one has $M(\alpha)(\delta)=\Psi(\alpha)_{11}\delta+\Psi(\alpha)_{21}F\delta,\, M(\alpha)(F\delta)=\Psi(\alpha)_{12}\delta+\Psi(\alpha)_{22}F\delta$.
\par This map extends in the obvious way to a map on matrices $\Psi: \Mat_{s,t}(\mathsf{O})\rightarrow \Mat_{2s,2t}(W(k))$ by building a block matrix consisting of $2\times 2$ blocks. The block matrices of this shape will be used to perform all kinds of linear algebra with the Dieudonn\'e modules.
\par The next step is to calculate the submodule $M(\ker(\eta)) \subseteq M(E[F^n])$. This is given by the following lemma:
\begin{lem}
Denote by $M(H): M(E[F^n])^g \rightarrow M(E[F^n])^g$ the map induced by the matrix $H$. Then $M(\ker(\eta))=\ker(M(H))$.
\end{lem}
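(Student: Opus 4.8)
The plan is to deduce the statement from the exactness of the covariant Dieudonné functor $M=M_{n,n}(\cdot)$. First I would rephrase $\ker(\eta)$ in terms of an endomorphism of $E^g$: since $\eta=\mu\circ\gamma(H)$ with $\mu$ an isomorphism, one has $\ker(\eta)=\ker(\gamma(H))$ as closed subgroup schemes of $E^g$, and by the choice of $n$ this kernel is contained in $E^g[F^n]$.

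The second step is to check that $\gamma(H)$ restricts to an endomorphism $\bar\gamma(H)$ of $E^g[F^n]$ whose kernel is again $\ker(\eta)$. For this, consider $\psi:=\gamma(H)\circ\iota\colon E^g[F^n]\to E^g$, where $\iota$ is the inclusion; its kernel is $\ker(\gamma(H))\cap E^g[F^n]=\ker(\eta)$, so its image is a quotient of $E^g[F^n]$ and is therefore killed by $F^n$. Any finite subgroup scheme of $E^g$ killed by $F^n$ is contained in $E^g[F^n]=\ker\big(F^n\colon E^g\to (E^g)^{(p^n)}\big)$ by functoriality of the relative Frobenius; hence $\psi$ factors through $E^g[F^n]$, producing the desired $\bar\gamma(H)$. (Alternatively one may argue directly from the identity $F^n\circ\gamma(H)=\gamma(H)^{(p^n)}\circ F^n$.) Under the canonical identification $M(E^g[F^n])=M(E[F^n])^g$, additivity of $M$ together with the very definition of the $\mathsf{O}$-module structure on $M(E[F^n])$ shows that $M(\bar\gamma(H))$ is precisely the block map on $M(E[F^n])^g$ induced by the matrix $H$, i.e. the map denoted $M(H)$ in the statement.

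Finally I would apply $M$ to the left-exact sequence
$$0\longrightarrow \ker(\eta)\longrightarrow E^g[F^n]\xrightarrow{\bar\gamma(H)} E^g[F^n].$$
Since $M$ is an exact equivalence onto the category of $D(k)$-modules killed by $F^n$ and $V^n$, it preserves kernels, so $M(\ker(\eta))=\ker\big(M(\bar\gamma(H))\big)=\ker(M(H))$, which is the assertion.

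I do not expect a genuine obstacle here. The only step that is not purely formal is the restriction of $\gamma(H)$ to $E^g[F^n]$, and even that is immediate once one notes that the image of $\gamma(H)|_{E^g[F^n]}$ is a quotient of an $F^n$-torsion group scheme, hence itself $F^n$-torsion; everything else is bookkeeping with the additive exact functor $M$ and the matrix description of $\gamma(H)$.
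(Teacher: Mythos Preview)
Your proof is correct and follows essentially the same approach as the paper's: reduce $\ker(\eta)$ to $\ker(\gamma(H))$ via the bijectivity of $\mu$, restrict $\gamma(H)$ to $E^g[F^n]$, identify the induced map on Dieudonn\'e modules with $M(H)$, and conclude by exactness of $M$. You are merely more careful than the paper in justifying why $\gamma(H)$ restricts to an endomorphism of $E^g[F^n]$; the paper simply asserts this restriction.
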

\begin{proof}
By definition of $H$ one has $\eta=\mu\circ \gamma(H)$ where $\mu: E^g\rightarrow (E^g)^\vee$ is the natural product polarization. Since $\mu$ is bijective, we have $\ker(\eta)=\ker (\gamma(H))$. Now $M(H)$ is the map on Dieudonn\'e modules induced by $\gamma(H)_{|E^g[F^n]}: E^g[F^n]\rightarrow E^g[F^n]$. Thus by the exactness of $M(\cdot)$ one has $\ker(M(H))=M(E^g[F^n]\cap \ker(\eta))$. But since we assumed $\ker(\eta)\subseteq E^g[F^n]$, we conclude  $M(\ker(\eta))=\ker(M(H))$.
\end{proof}
The algorithm needs as an input the Dieudonn\'e module of a maximal isotropic subgroup scheme $\mathcal{H}\subset \ker(\eta)$. To make sense of this we need to identify the pairing on $M(\ker(\eta))$ induced by the commutator pairing $\ker(\eta)\times \ker(\eta)\rightarrow \mathbb{G}_m$
\begin{lem}\label{DieuPair}
There is a constant $c\in W(k)^\times$ such that the $W(k)$ linear alternating pairing 
$$M(\ker(\eta))\times M(\ker(\eta))\rightarrow W(k)\left[\frac{1}{p}\right]/W(k)$$
is given by the matrix
$$c p^{-n} J \Psi(H)$$
where
$$J=\begin{pmatrix}
0 & 1\\
-1 & 0\\
&& \ddots\\
&&& 0 & 1\\
&&&-1 & 0\\
\end{pmatrix}$$
\end{lem}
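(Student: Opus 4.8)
The plan is to push the whole statement through the Dieudonné functor and reduce it to the computation of a single $2\times 2$ block. Fix an ample line bundle inducing $\eta$, so that the commutator pairing on $\ker(\eta)$ is the one attached to its theta group. Exactly as in Lemma~\ref{LemTwoTorsTheta} -- but now using Mumford's formulas \cite[p.~211, (4), (5)]{MumAV} as identities of group schemes rather than on a prime-to-$p$ torsion subgroup -- the factorisation $\eta=\mu\circ\gamma(H)$, together with the fact that $\mu$ identifies $E^g$ with $(E^g)^\vee$, shows that this commutator pairing is obtained from the canonical self-pairing $e$ of $E^g[F^n]$ (induced from the Weil pairing on $E[F^n]$) by inserting $\gamma(H)$ into one of the two arguments. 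Here $e$ is the perfect alternating pairing exhibiting $E^g[F^n]$ as its own Cartier dual: $E[F^n]^D\cong E[V^n]$ via the principal polarization of $E$, and $V=-F$ on $E$ since $F^2+p=0$, so $E[V^n]=E[F^n]$; on $E^g$ one takes the orthogonal direct sum over the $g$ factors.

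Next I would incarnate $e$ on Dieudonné modules. Under the covariant Dieudonné functor, a $\mathbb G_m$-valued (equivalently $\mu_{p^n}$-valued) perfect pairing of $p^n$-torsion finite group schemes corresponds to a perfect $W(k)$-bilinear pairing of Dieudonné modules with values in $p^{-n}W(k)/W(k)\subseteq W(k)[\tfrac1p]/W(k)$, under which $F$ and $V$ become mutually adjoint (up to the Frobenius twist built into our normalisation $M_{n,n}$). Applying this to $e$ yields an alternating $W(k)$-bilinear pairing $b_0$ on $M(E^g[F^n])=M(E[F^n])^g$. Since $M(E[F^n])$ is generated over $W(k)$ by $\delta$ and $F\delta$, the restriction of $b_0$ to one factor is determined by $b_0(\delta,\delta)$, $b_0(\delta,F\delta)$ and $b_0(F\delta,F\delta)$; alternation kills the outer two, and non-degeneracy of $e$ forces $b_0(\delta,F\delta)=c\,p^{-n}$ for a unique unit $c\in W(k)^\times$. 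Hence $b_0$ has Gram matrix $c\,p^{-n}J$.

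Finally I would combine the two ingredients. By the previous lemma $M(\ker(\eta))=\ker(M(H))=\ker\Psi(H)$, and by the first paragraph the commutator pairing on $\ker(\eta)$, transported to Dieudonné modules, is the pairing obtained from $b_0$ by precomposing one argument with $M(\gamma(H))$; on $W(k)$-lifts $\tilde\xi,\tilde\zeta\in W(k)^{2g}$ of $\xi,\zeta\in\ker\Psi(H)$ this value is $\tilde\xi^{\,t}\big(c\,p^{-n}J\,\Psi(H)\big)\tilde\zeta$ modulo $W(k)$, and a short check shows that it is independent of the chosen lifts precisely because $\Psi(H)\zeta=0$ for $\zeta\in\ker\Psi(H)$. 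Thus the pairing is given by the matrix $c\,p^{-n}J\,\Psi(H)$. That it is alternating follows from $H$ being Hermitian together with the Rosati involution $\alpha\mapsto\bar\alpha$ on $\mathsf O$ being the adjoint with respect to $e$: this gives $\Psi(\bar\alpha)=J_1^{-1}\Psi(\alpha)^{t}J_1$ for the block $J_1=\left(\begin{smallmatrix}0&1\\-1&0\end{smallmatrix}\right)$, hence $\Psi(H)^{t}=J\Psi(H)J^{-1}$ and $(J\Psi(H))^{t}=-J\Psi(H)$; since $p>2$, this forces $\tilde\xi^{\,t}(c\,p^{-n}J\,\Psi(H))\tilde\xi\equiv 0$.

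The step I expect to be the real work is the second paragraph: getting the Dieudonné incarnation of Cartier duality exactly right under the paper's twisted normalisation of $M_{n,n}$ -- in particular that the pairing lands in $p^{-n}W(k)/W(k)$ and that on a single copy of $E[F^n]$ it is $c\,p^{-n}$ times the standard symplectic block, so that the unit $c$ is the only remaining ambiguity. Identifying the commutator pairing of the connected group scheme $\ker(\eta)$ with the $\gamma(H)$-twist of $e$ (the first paragraph) is routine once one has the group-scheme form of \cite[p.~211]{MumAV}, and the well-definedness and alternation checks in the third paragraph are short.
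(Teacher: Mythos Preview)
Your approach is essentially the paper's: factor the commutator pairing as the Weil pairing on the elliptic factors twisted by $\gamma(H)$ via Mumford's formulas, then pin down the single-factor pairing by non-degeneracy. The one organisational difference is that you exploit the self-duality of $E[F^n]$ directly (using $V=-F$ from $F^2+p=0$), whereas the paper embeds $M(E[F^n])\subseteq M(E[p^{\lceil n/2\rceil}])$ and computes the Weil pairing there, splitting into the cases $n$ even and $n$ odd; your route is a bit cleaner but lands in the same place.

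One point to tighten: your claim that non-degeneracy forces $b_0(\delta,F\delta)=c\,p^{-n}$ with $c$ a unit is not quite right as stated. The group $E[F^n]$ has exponent $p^{\lceil n/2\rceil}$, not $p^n$ (indeed $F^2=-p$ gives $E[F^n]\subseteq E[p^{\lceil n/2\rceil}]$), so the self-pairing really lands in $p^{-\lceil n/2\rceil}W(k)/W(k)$ and non-degeneracy only yields $b_0(\delta,F\delta)=c\,p^{-\lfloor n/2\rfloor}$, exactly as the paper computes. This does not break your argument---the matrix in the statement is to be read as acting on $W(k)^{2g}$-lifts and reduced modulo $W(k)$, so the precise exponent is a normalisation issue---but you should track it carefully; the paper's parity split is there precisely to get this valuation right.
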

\begin{proof}
There is an inclusion $M(E[F^n])\subseteq M(E[p^{\lceil \frac{n}{2} \rceil}])$. Now $M(E[p^{\lceil \frac{n}{2} \rceil}])$ carries a Weil pairing denoted $(\cdot, \cdot): M(E[p^{\lceil \frac{n}{2} \rceil}])\times M(E[p^{\lceil \frac{n}{2} \rceil}])\rightarrow W(k)[\frac{1}{p}]/W(k)$. We claim that $(\delta, F\delta)=c p^{-\lfloor \frac{n}{2} \rfloor}$ for some $c\in W(k)^\times$. Indeed, if $n$ is even, then $M(E[F^n])=M(E[p^{\lceil \frac{n}{2} \rceil}])$ and the claim follows from the non-degeneracy of the Weil pairing. If $n$ is odd, then there is a $\delta_0\in M(E[p^{\lceil \frac{n}{2} \rceil}])$ such that $p\delta_0=F\delta$. Furthermore, $\delta_0, \delta$ generate $M(E[p^{\lceil \frac{n}{2} \rceil}])$ as a $W(k)$ module. Again the claim follows from the non-degeneracy of the Weil pairing and an easy computation.
\par To compare the Weil pairing with the commutator pairing we can use \cite[p. 211, formulas (4), (5)]{MumAV} as in the proof of \ref{LemTwoTorsTheta}. (Mumford assumes that $p$ should not divide the degree of the polarization but the formulas are still valid in this case by \cite[Proof of Corollary 11.22]{vdGMooAV})
\end{proof}
\subsection{Invariants of the connected level group}\label{SecInvCon}
Recall that the \hyperref[AlgMain]{main algorithm} takes as an input a maximally isotropic Dieudonn\'e submodule $M(\mathcal{H})\subset M(\ker(\eta))$. With Lemma \ref{DieuPair} we can check whether $M(\mathcal{H})$ really is isotropic.
\par On the other hand, since $M(\cdot)$ is an equivalence of categories, there is a corresponding maximally isotropic subgroup scheme $\mathcal{H}\subset \ker(\eta)$. We will use the following notation for the rest of the paper: Define $A$ to be $E^g/\mathcal{H}$ and denote by $\pi: E^g\rightarrow A$ the quotient map. Notice that $\mathcal{H}$ is unipotent and isotropic in $\ker(\eta)$. Recall further that $D$ induces $\eta$. Therefore there is a unique level group $\tilde{\mathcal{H}}\subset \mathcal{G}(\mathcal{O}(D))$ above $\mathcal{H}$ (well known; see also \cite[Corollary 3.5]{AP}). This implies that the line bundle $\mathcal{O}(D)$ descends to a line bundle, say $\mathcal{L}$ on $A$. One has $\deg(\mathcal{L})=1$ since $\mathcal{H}$ is maximally isotropic. Our goal is to compute the theta nullvalues of the principally polarized abelian variety $(A, \mathcal{L})$.
\par Since all our considerations of theta nullvalues start with the 1-dimensional $\HH^0(A, \mathcal{L})$, we need to compute the image of
$$\pi^*: \HH^0(A, \mathcal{L})\rightarrow \HH^0(E^g, \mathcal{O}(D))$$
But this image is given by $\HH^0(E^g, \mathcal{O}(D))^{\tilde{\mathcal{H}}}$ (well known; see also \cite[Proposition 3.18]{AP}).  In the Moret-Bailly case that one dimensional vector space is generated by the rational function $g_x$ of \cite[Theorem 4.19]{AP} (see the proof of the quoted theorem). The reader who is only interested in the computation of Moret-Bailly families can skip the rest of this section entirely.
\par First we want to identify $\tilde{\mathcal{H}}$. Recall that we assumed the spanning tuple $D_1,\ldots, D_r$ of completely decomposed $\Theta_{\eta}$-divisors inducing $\eta$. The completely decomposed $\Theta_{\eta}$-divisors $D_1,\ldots, D_r$ determine group schemes $\mathcal{H}_i\subset \ker(\eta)$ and level groups $\tilde{\mathcal{H}}_i \subset \mathcal{G}(\mathcal{O}(D))$.
\begin{definition} We define the \emph{shuffle pairing} on $\tilde{\mathcal{H}}_1\times\ldots\times\tilde{\mathcal{H}}_r$ to be the pairing
$$\sh:(\tilde{\mathcal{H}}_1\times\ldots\times\tilde{\mathcal{H}}_r)\times (\tilde{\mathcal{H}}_1\times\ldots\times\tilde{\mathcal{H}}_r)\rightarrow \mathbb{G}_m  $$
$$(h_1,\ldots, h_r),(h_1',\ldots, h_r') \mapsto h_1\cdots h_r\cdot h_1'\cdots h_r' \cdot ((h_1\cdot h_1')\cdots (h_r \cdot h_r') )^{-1}$$
\end{definition}
\begin{lem}\label{Biad} The shuffle pairing is biadditive.
\end{lem}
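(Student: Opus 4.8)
The plan is to exploit the structure of the shuffle pairing as a "correction term" measuring the failure of the multiplication map $m\colon \tilde{\mathcal H}_1\times\cdots\times\tilde{\mathcal H}_r\to\mathcal G(\mathcal O(D))$ to be a homomorphism, and to reduce biadditivity to the commutator relations in the theta group. Write $m(h_1,\dots,h_r)=h_1\cdots h_r$ for the ordered product in $\mathcal G(\mathcal O(D))$. Then by definition
$$\sh\big((h_i),(h_i')\big)=m(h_i)\cdot m(h_i')\cdot m(h_ih_i')^{-1}.$$
Since $\mathcal H$ is unipotent (being a subgroup scheme of the connected $\ker(\eta)$), each $\tilde{\mathcal H}_i$ is a central extension of a unipotent group scheme by $\mathbb G_m$, and the image of $\sh$ lands in $\mathbb G_m$, which is central. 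The first step is therefore to expand $m(h_ih_i')=(h_1h_1')(h_2h_2')\cdots(h_rh_r')$ and, using centrality of the commutators $[h_a,h_b']\in\mathbb G_m$, to "sort" this product into the form $h_1\cdots h_r\cdot h_1'\cdots h_r'$ times a product of commutators $[h_b,h_a']$ for $a<b$. This gives the clean formula
$$\sh\big((h_i),(h_i')\big)=\prod_{a<b}[h_b,\,h_a'],$$
where $[\,\cdot\,,\cdot\,]$ denotes the (bi-multiplicative) commutator pairing on $\mathcal G(\mathcal O(D))$ restricted to the relevant level groups.

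Once this formula is established, biadditivity in each argument is immediate: the commutator pairing $[\,\cdot\,,\cdot\,]$ on a theta group is bimultiplicative (this is standard, and for our situation it is the pairing identified in Lemma~\ref{LemTwoTorsTheta} and Lemma~\ref{DieuPair}), so a product of such pairings is again bimultiplicative. Concretely, replacing $h_b$ by $h_b h_b''$ in the $b$-th slot multiplies each factor $[h_b,h_a']$ by $[h_b'',h_a']$, and replacing $h_a'$ by $h_a'h_a'''$ multiplies $[h_b,h_a']$ by $[h_b,h_a''']$; collecting terms shows $\sh$ is additive in each of the two vector arguments. I would phrase this cleanly at the level of functors of points over an arbitrary $k$-scheme $T$, so that no assumption on $k$ is needed — the commutators are $T$-points of $\mathbb G_m$ and the sorting identity is a formal consequence of the group law.

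The main obstacle is purely bookkeeping: carrying out the "sorting" of the product $(h_1h_1')\cdots(h_rh_r')$ correctly, keeping track of which commutators appear and with which sign/order, and checking that no commutators among the $h_i$ alone (or among the $h_i'$ alone) survive — they must cancel because the $\tilde{\mathcal H}_i$ individually are level groups, hence commutative, but one still has to see that the cross terms assemble exactly into $\prod_{a<b}[h_b,h_a']$ with no leftover. This is where centrality of $\mathbb G_m$ in $\mathcal G(\mathcal O(D))$ is used repeatedly: it lets us move each commutator past everything else freely, so the only question is combinatorial. A short induction on $r$ handles it: the case $r=1$ is trivial ($\sh\equiv 1$), and passing from $r-1$ to $r$ one peels off the last pair $(h_rh_r')$, commutes $h_r'$ to the left past $h_1'\cdots h_{r-1}'$ — which is free, as these all lie in the same... no, they lie in different level groups, so this produces exactly the commutators $\prod_{a<r}[h_r',h_a']$... and since those vanish by isotropy of the relevant groups or reassemble, one is left with the inductive expression times $\prod_{a<r}[h_r,h_a']$. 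I would present the $r=2$ computation in full as the base of the pattern and then invoke induction, relegating the sign/order verification to a sentence.
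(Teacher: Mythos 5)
Your overall approach is exactly the paper's: the printed proof is the single sentence that the shuffle pairing ``can be written as the product of restrictions of the commutator pairing,'' and your identity $\sh\bigl((h_i),(h_i')\bigr)=\prod_{a<b}[h_b,h_a']$ is the correct way to make that precise. Combined with the centrality of $\mathbb{G}_m$ in $\mathcal{G}(\mathcal{O}(D))$ (so that the commutator is bimultiplicative and depends only on the images in $K(\mathcal{O}(D))$), this gives biadditivity at once, and your $r=2$ computation is right. However, the inductive step you sketch is set up incorrectly and, as written, would fail. After applying the inductive hypothesis to $(h_1h_1')\cdots(h_{r-1}h_{r-1}')$ you are left with a central factor times $h_1\cdots h_{r-1}\,h_1'\cdots h_{r-1}'\,h_r h_r'$, and the element you must now move is the \emph{unprimed} $h_r$, leftward past the primed block $h_1'\cdots h_{r-1}'$; this produces exactly the mixed commutators $[h_a',h_r]$ for $a<r$ and nothing else. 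Your version instead moves $h_r'$ past $h_1'\cdots h_{r-1}'$, which generates commutators $[h_r',h_a']$ between elements of the distinct level groups $\tilde{\mathcal{H}}_r,\tilde{\mathcal{H}}_a$ --- and these do \emph{not} vanish: each $\mathcal{H}_i$ is isotropic, but distinct $\mathcal{H}_a,\mathcal{H}_b$ are not mutually orthogonal in general (if they all were, the commutator pairing would be trivial on all of $\ker(\eta)$, since the $\mathcal{H}_i$ generate it). So ``vanish by isotropy or reassemble'' is not a valid escape. The repair is simply to observe that in sorting $(h_1h_1')\cdots(h_rh_r')$ into $h_1\cdots h_r\cdot h_1'\cdots h_r'$ one never interchanges two primed or two unprimed factors --- only a primed $h_a'$ with an unprimed $h_b$ for $a<b$ --- so only mixed commutators appear. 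With that correction your argument closes and coincides with the intended proof.
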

\begin{proof}
Indeed, the shuffle pairing can be written as the product of restrictions of the commutator pairing.
\end{proof}
The following Theorem gives $\tilde{\mathcal{H}}$:
\begin{thm}\label{ShuffleLevel} Let $\mathcal{H}\subset \ker(\eta)$ be an isotropic subgroup scheme. For any additive splitting $\sigma: \ker(\eta)\rightarrow \mathcal{H}_1\times \ldots \times\mathcal{H}_r$ the image of
$$\mathcal{H} \rightarrow \mathcal{G}(\mathcal{O}(D)),\, h\mapsto \sh\left(\sigma(h), \frac{\sigma(h)}{2}\right) \tau(h)$$
is $\tilde{\mathcal{H}}$. Here $\tau$ denotes the composition
$$\ker(\eta)\stackrel{\sigma}{\rightarrow} \mathcal{H}_1\times \ldots \times\mathcal{H}_r\cong \tilde{\mathcal{H}_1}\times \ldots \times\tilde{\mathcal{H}_r} \stackrel{(h_1,\ldots, h_r) \mapsto h_1\cdots h_r}\longrightarrow \mathcal{G}(\mathcal{O}(D))$$
\end{thm}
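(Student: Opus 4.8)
The plan is to show that the displayed morphism
$$\Phi\colon \mathcal{H}\longrightarrow \mathcal{G}(\mathcal{O}(D)),\qquad h\longmapsto \sh\!\left(\sigma(h),\tfrac{\sigma(h)}{2}\right)\tau(h)$$
is a homomorphism of group schemes lying over the inclusion $\mathcal{H}\hookrightarrow E^g$. Granting this, $\Phi$ is a monomorphism (its composite with the projection $\mathcal{G}(\mathcal{O}(D))\to E^g$ is the closed immersion $\mathcal{H}\hookrightarrow E^g$), so it identifies $\mathcal{H}$ with a subgroup scheme of $\mathcal{G}(\mathcal{O}(D))$ mapping isomorphically onto $\mathcal{H}$; that is, $\im(\Phi)$ is a level group above $\mathcal{H}$, and by the uniqueness of such a level group recalled above it must coincide with $\tilde{\mathcal{H}}$.

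First note that division by $2$ is legitimate here: $\mathcal{H}$ is a $p$-group scheme and $p$ is odd, so $[2]$ is an automorphism of $\mathcal{H}$ (and of $\ker(\eta)$), and $\sigma(h)/2$ may be read as $[2]^{-1}\sigma(h)=\sigma(h/2)$. That $\Phi$ lies over the inclusion is immediate: the isomorphisms $\mathcal{H}_i\cong\tilde{\mathcal{H}}_i$ lie over $\mathcal{H}_i\subset\ker(\eta)$, so $\tau(h)$ lies over $\sum_i\sigma(h)_i=h$, the last equality because $\sigma$ splits $\mathcal{H}_1\times\cdots\times\mathcal{H}_r\to\ker(\eta)$; and multiplying by the $\mathbb{G}_m$-valued scalar $\sh(\sigma(h),\sigma(h)/2)$ does not change the image in $E^g$. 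For the homomorphism property I would set $b(h,h'):=\sh(\sigma(h),\sigma(h'))$, a function $\mathcal{H}\times\mathcal{H}\to\mathbb{G}_m$, and $c(h):=\sh(\sigma(h),\sigma(h)/2)=b(h,h/2)$. Unwinding the definitions of $\tau$ and $\sh$, and using that each $\mathcal{H}_i\cong\tilde{\mathcal{H}}_i$ is a homomorphism and $\sigma$ is additive, one obtains $\tau(h)\tau(h')=b(h,h')\,\tau(h+h')$; hence $\Phi$ is a homomorphism if and only if $b=\partial c$, i.e. $b(h,h')=c(h+h')c(h)^{-1}c(h')^{-1}$.

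It remains to check $b=\partial c$. By Lemma \ref{Biad} the form $b$ is biadditive. The key point is that it is also symmetric. Writing elements of $\tilde{\mathcal{H}}_i$ as lifts to $\mathcal{G}(\mathcal{O}(D))$ of points of $\ker(\eta)$ and moving all $\mathbb{G}_m$-valued commutators out of the $r$-fold products that define $\sh$, one finds for $u,v\in\tilde{\mathcal{H}}_1\times\cdots\times\tilde{\mathcal{H}}_r$ the identity
$$\frac{\sh(u,v)}{\sh(v,u)}=\Big(\prod_{i=1}^{r} e^{\mathcal{O}(D)}(\bar v_i,\bar u_i)\Big)\cdot e^{\mathcal{O}(D)}\Big(\textstyle\sum_{i}\bar u_i,\ \sum_{i}\bar v_i\Big),$$
where $e^{\mathcal{O}(D)}$ is the commutator pairing on $\ker(\eta)=K(\mathcal{O}(D))$ and $\bar u_i,\bar v_i\in\mathcal{H}_i$ denote the images. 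Taking $u=\sigma(h)$, $v=\sigma(h')$ with $h,h'\in\mathcal{H}$: each factor of the product is $1$ because $\mathcal{H}_i$ is isotropic (Lemma \ref{AuxDivSuper}), and the remaining factor is $e^{\mathcal{O}(D)}(h,h')=1$ because $\mathcal{H}$ is isotropic; thus $b$ is symmetric. Finally, for a symmetric biadditive $b$ and $c(h)=b(h,h/2)$, expanding $c(h+h')$ by biadditivity gives $\partial c(h,h')=b(h,h'/2)\,b(h',h/2)$; since $b(h,h'/2)$ and $b(h/2,h')$ both square to $b(h,h')$ while $b(h/2,h')=b(h',h/2)$ by symmetry, the morphism $\mathcal{H}\times\mathcal{H}\to\mu_2$ given by $b(h,h'/2)/b(h/2,h')$ is trivial (its target is étale since $p\neq2$, its source is connected, and the origin maps to $1$), so $\partial c(h,h')=b(h,h'/2)^2=b(h,h')$. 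Hence $\Phi$ is a homomorphism, and the proof concludes as in the first paragraph.

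I expect the only genuine obstacle to be the symmetry of $b$: establishing the cancellation identity above requires careful bookkeeping with the central extension $1\to\mathbb{G}_m\to\mathcal{G}(\mathcal{O}(D))\to\ker(\eta)\to1$, and it is exactly here that both isotropy hypotheses (on each $\mathcal{H}_i$ and on $\mathcal{H}$) as well as the splitting property of $\sigma$ are used. The remaining steps — that $\Phi$ lies over the inclusion, the multiplicativity formula for $\tau$, and the coboundary computation — are routine.
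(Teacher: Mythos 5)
Your proposal is correct and follows essentially the same route as the paper: reduce to showing the map is a homomorphism (uniqueness of the level group over the unipotent $\mathcal{H}$ then finishes), prove symmetry of $\sh$ on $\sigma(\mathcal{H})\times\sigma(\mathcal{H})$ using both isotropy hypotheses, and conclude by the biadditivity/coboundary computation. You spell out two points the paper leaves implicit — the explicit commutator identity for $\sh(u,v)/\sh(v,u)$ and the connectedness argument identifying $b(h,h'/2)$ with $b(h/2,h')$ — but the argument is the same.
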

\begin{proof}
Indeed, clearly it suffices to show that the map
$$j:\mathcal{H} \rightarrow \mathcal{G}(\mathcal{O}(D)),\, h\mapsto \sh\left(\sigma(h), \frac{\sigma(h)}{2}\right) \tau(h)$$
is a group homomorphism. To prove this, consider any $k$-scheme $T$ and $h, h'\in \mathcal{H}(T)$ arbitrary. We first claim that $\sh\left(\sigma(h),\sigma(h')\right)=\sh\left(\sigma(h'),\sigma(h)\right)$. Indeed, denote the components of $\sigma(h)$ by $(h_1,\ldots, h_r)$, resp. $\sigma(h')=(h_1', \ldots, h_r')$. Now $\sigma$ splits the map $\mathcal{H}_1\times\ldots \times \mathcal{H}_r\rightarrow \ker(\eta), \, (h_1, \ldots, h_r)\mapsto \sum_{i=1}^r h_i$. Therefore the element $h_1 \cdots h_r \in \mathcal{G}(\mathcal{O}(D))$ maps to $h\in \mathcal{H}\subset K(\mathcal{O}(D))$, resp. $h_1'  \cdots h_r' \in \mathcal{G}(\mathcal{O}(D))$ maps to $h'$ under the map $\mathcal{G}(\mathcal{O}(D))\rightarrow K(\mathcal{O}(D))$. Since $\mathcal{H}$ is by assumption isotropic, we have $[h,h']=1$. Thus the elements $h_1 \cdots h_r \in \mathcal{G}(\mathcal{O}(D))$ and $h_1'  \cdots h_r' \in \mathcal{G}(\mathcal{O}(D))$ commute. This implies $\sh\left(\sigma(h),\sigma(h')\right)=\sh\left(\sigma(h'),\sigma(h)\right)$ immediately from the definition of the shuffle pairing.
\par We are now ready to show that $j$ is a group homomorphism. Indeed,
$$j(h)j(h')= \sh\left(\sigma(h), \frac{\sigma(h)}{2}\right) \tau(h) \sh\left(\sigma(h'), \frac{\sigma(h')}{2}\right) \tau(h')=$$
$$\sh\left(\sigma(h), \frac{\sigma(h)}{2}\right)  \sh\left(\sigma(h'), \frac{\sigma(h')}{2}\right) \tau(h)\tau(h')=$$
$$\sh\left(\sigma(h), \frac{\sigma(h)}{2}\right)  \sh\left(\sigma(h'), \frac{\sigma(h')}{2}\right) \sh\left(\sigma(h), \sigma(h')\right)\tau(h+h')=$$
$$\sh\left(\sigma(h), \frac{\sigma(h)}{2}\right)  \sh\left(\sigma(h'), \frac{\sigma(h')}{2}\right) \sh\left(\sigma(h), \frac{\sigma(h')}{2}\right)\sh\left(\sigma(h'), \frac{\sigma(h)}{2}\right)\tau(h+h')=$$
$$\sh\left(\sigma(h+h'), \frac{\sigma(h+h')}{2}\right) \tau(h+h')=j(h+h') $$
Here it was important that $\sigma$ is a group homomorphism and $\sh$ is biadditive by Lemma \ref{Biad}.
\end{proof}
We need to introduce some further notation:
\begin{itemize}
\item By Lemma \ref{LemRatEq} there exist $P_1, \ldots, P_r\in E^g[2](k)$ such that $t_{P_i}^{-1}(D_i)$ is rationally equivalent to $D$. Define $D_i'$ as $t_{P_i}^{-1}(D_i)$. Then $D_i'\sim D$ for all $i$.
\end{itemize}
We will now use Dieudonne theory and Theorem \ref{ShuffleLevel} to make the action of $\tilde{\mathcal{H}}$ on $\HH^0(E^g, \mathcal{O}(D))$ explicit. Indeed, we are given the Dieudonn\'e module $M(\mathcal{H})$. Since $\mathcal{H}$ is killed by $F^n, V^n$, there exists a surjection of Dieudonn\'e modules $M(W_{n,n})^a\rightarrow M(\mathcal{H})$ for some $a\in \mathbb{N}$. In fact we can assume that $M(\mathcal{H})$ is given as an input by specifying a set of elements in $M(E[F^n])^g$ generating $M(\mathcal{H})$ as a Dieudonn\'e module. This determines such a surjection.
\par On the other hand there is a corresponding surjection of group schemes $W_{n,n}^a\rightarrow \mathcal{H}$. Denote by $h=\mathcal{H}(W_{n,n}^a)$ the corresponding scheme valued point.
\par We want to make the composition $W_{n,n}^a\rightarrow \mathcal{H}\cong \tilde{\mathcal{H}}\subset \mathcal{G}(\mathcal{O}(D))$ explicit. This amounts to plugging $h$ into the formula of Theorem \ref{ShuffleLevel}. But, instead of working with the theta group directly we will only describe the induced action of $W_{n,n}^a$ on $\HH^0(E^g, \mathcal{O}(D))$. Thus we want to know the comodule map $c_W: \HH^0(E^g, \mathcal{O}(D))\rightarrow \HH^0(E^g, \mathcal{O}(D))\otimes A_{W_{n,n}^a}$. Let $\rho\in \HH^0(E^g, \mathcal{O}(D))$ be arbitrary. By Theorem \ref{ShuffleLevel} $c_W(\rho)=\sh(\sigma(h), \frac{\sigma(h)}{2}) \tau(h) \cdot \rho$. We first figure out what $\tau(h)\cdot \rho$ is. Indeed, by assumption the map $\mathcal{H}_1\times \ldots \times \mathcal{H}_r\rightarrow \ker(\eta)$ is split surjective. On Dieudonn\'e modules this map induces the map $M(\mathcal{H}_1)\oplus\ldots \oplus M(\mathcal{H}_r)\rightarrow M(\ker(\eta)),\, (m_1,\ldots, m_r)\mapsto \sum_{i=1}^r m_i$. This map must also be split surjective as $M(\cdot)$ is fully faithful. Compute a splitting denoted $M(\sigma)$ (you need to know the Dieudonn\'e module $M(\mathcal{H}_i)\subset M(\ker(\eta))$; it can be obtained using the classification of c.d. $\Theta_\eta$-divisors). Now $M(\sigma)$ must be induced by a map $\sigma: \ker(\eta)\rightarrow \mathcal{H}_1\times\ldots \times \mathcal{H}_r$.
Consider the following composition:
 $$W_{n,n}^a \rightarrow \mathcal{H} \rightarrow \ker(\eta)\stackrel{\sigma}{\rightarrow}\mathcal{H}_1\times \ldots \times \mathcal{H}_r \stackrel{\pr_i}{\rightarrow} \mathcal{H}_i $$ 
We claim that this map can be made explicit. Indeed, we can compute the composition of maps of Dieudonn\'e modules $M(W_{n,n})^a\rightarrow M(\mathcal{H}_i)\subset M(E^g[F^n])$. Consider the diagram
$$\begin{tikzcd}
&  M(W_{n,n})^g \arrow[d, two heads]\\
M(W_{n,n})^a \arrow{r} & M(E^g[F^n])
\end{tikzcd}$$
Since $M(W_{n,n})^a$ is a free $\frac{D(k)}{D(k) F^n + D(k) V^n}$-module, there exists a map of Dieudonn\'e modules $M(W_{n,n}^a)\rightarrow M(W_{n,n}^g)$ making the diagram above commutative. This map can be easily computed by lifting the images of basis elements of $M(W_{n,n})^a$.
\par On the other can explicitly write down maps $W_{n,n}\rightarrow W_{n,n}$ via the isomorphism $\End(W_{n,n})\cong \End(M(W_{n,n}))=\frac{D(k)}{D(k) F^n+D(k) V^n}$. Therefore we can compute polynomials describing the map of group schemes
$$W_{n,n}^a\rightarrow W_{n,n}^g\rightarrow E^g[F^n]$$
with image $\mathcal{H}_i$.
\begin{lem}
The scheme valued point in $\mathcal{H}_i(W_{n,n}^a)$ corresponding to the map $W_{n,n}^a\rightarrow \mathcal{H}_i$ is the $h_i$ from Theorem \ref{ShuffleLevel}.
 \end{lem}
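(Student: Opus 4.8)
The plan is to unwind the definitions and invoke the Yoneda lemma together with the full faithfulness of the covariant Dieudonné functor $M(\cdot)$; the statement is a bookkeeping lemma recording the compatibility of the functor-of-points formula in Theorem \ref{ShuffleLevel} with the Dieudonné-module computation just carried out, so I do not expect any real content beyond this. First I would observe that the surjection $W_{n,n}^a \twoheadrightarrow \mathcal{H}$ is, by Yoneda, literally the scheme valued point $h \in \mathcal{H}(W_{n,n}^a) = \Hom(W_{n,n}^a, \mathcal{H})$ — this is how $h$ was defined. Hence for any morphism of group schemes $\phi$ out of $\mathcal{H}$, or out of $\ker(\eta)$ precomposed with the fixed inclusion $\mathcal{H}\hookrightarrow\ker(\eta)$, the point $\phi(h)$ is just the composite of $\phi$ with that surjection. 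Taking $\phi=\pr_i\circ\sigma$ shows that $h_i:=\pr_i(\sigma(h))$ is precisely the point of $\mathcal{H}_i(W_{n,n}^a)$ represented by the composite
$$W_{n,n}^a \longrightarrow \mathcal{H} \hookrightarrow \ker(\eta) \stackrel{\sigma}{\longrightarrow} \mathcal{H}_1\times\cdots\times\mathcal{H}_r \stackrel{\pr_i}{\longrightarrow} \mathcal{H}_i.$$

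Next I would match this with the map $W_{n,n}^a\to\mathcal{H}_i$ constructed in the preceding paragraphs. By construction $M(\sigma)$ is the splitting of $M(\mathcal{H}_1)\oplus\cdots\oplus M(\mathcal{H}_r)\to M(\ker(\eta))$ induced by $\sigma$, the surjection $W_{n,n}^a\twoheadrightarrow\mathcal{H}$ is specified through its effect $M(W_{n,n})^a\to M(\mathcal{H})$ on Dieudonné modules, and $\pr_i$ corresponds to $M(\pr_i)$; therefore the explicit composite $M(W_{n,n})^a\to M(\mathcal{H}_i)\subseteq M(E^g[F^n])$ we wrote down is exactly $M$ applied to the displayed composite of group scheme maps. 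Since $M(\cdot)$ is fully faithful on finite commutative group schemes killed by $F^n$ and $V^n$, a morphism between such group schemes is determined by its image under $M(\cdot)$, so the explicitly computed map of group schemes $W_{n,n}^a\to\mathcal{H}_i$ agrees with the above composite, hence with the point $h_i$.

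The only care needed — and this is the closest thing to an obstacle — is to make sure that all the identifications in play are the canonical ones: $\Hom(W_{n,n}^a,\mathcal{H})=\mathcal{H}(W_{n,n}^a)$ via Yoneda, morphisms versus their Dieudonné realizations, and the compatibility of $\pr_i$ with $M(\pr_i)$ and of the inclusion $\mathcal{H}\hookrightarrow\ker(\eta)$ with the inclusion of Dieudonné modules. These all hold by the way $\sigma$ and the surjection were set up. In particular one should note that $\sigma$ really is a morphism of group schemes, not merely a map of fppf sheaves, which is immediate because $M(\sigma)$ is a morphism of Dieudonné modules and $M(\cdot)$ is an equivalence. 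With these identifications in place the lemma follows.
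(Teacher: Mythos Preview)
Your proposal is correct and is precisely what the paper means by ``Clear by construction'': you have spelled out the Yoneda identification of $h$ with the surjection $W_{n,n}^a\twoheadrightarrow\mathcal{H}$ and used full faithfulness of $M(\cdot)$ to match the explicitly computed map with $\pr_i\circ\sigma$ applied to $h$. The only thing you might add for completeness is a remark that the auxiliary lift through $W_{n,n}^g$ used in the paper is purely a computational device and does not alter the resulting morphism $W_{n,n}^a\to\mathcal{H}_i$, since the lifting diagram commutes by construction.
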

 \begin{proof}
Clear by construction.
 \end{proof}
To figure out $U_\tau(h)(\rho)=U_{h_1\cdots h_r}(\rho)$ notice that by \cite[Proposition 3.18]{AP} the level group above $\mathcal{H}_i$ acts on $\HH^0(E^g, \mathcal{O}(D_i'))$ via translating rational functions. Since the rational equivalence $D \sim D_i'$ comes from a rational function that we can compute explicitly, we can compute $U_{\tau(h)}( \rho)\in \HH^0(E^g, \mathcal{O}(D))\otimes A_{W_{n,n}^a}$. The calculation starts from the translation map on rational functions $K(E^g)\rightarrow K(E^g)\otimes A_{E^g[F^n]}$ induced from the group law on $E$. The computation uses only the following operations: Evaluation of the translation map, composition of polynomial expressions, multiplication by rational functions.
\par To complete our evaluation of the formula of Theorem \ref{ShuffleLevel} we need the element $\sh\left( \sigma(h), \frac{\sigma(h)}{2} \right) \in \mathbb{G}_m(W_{n,n}^a)=A_{W_{n,n}^a}^\times$. This is provided by the following lemma:
\begin{lem}
For any $\rho\in \HH^0(E^g, \mathcal{O}(D))$. The two elements
$$U_{h_1 \cdots h_r}( \rho),\, U_{h_r \cdots h_1}( \rho) \in \HH^0(E^g, \mathcal{O}(D))\otimes A_{W_{n,n}^a}$$
satisfy
$$ U_{h_r \cdots h_1}(\rho)=\sh\left(\sigma(h), \sigma(h)\right) U_{h_1 \cdots h_r}( \rho)$$
\end{lem}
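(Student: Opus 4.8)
The plan is to reduce the whole statement to a single identity inside the group $\mathcal{G}(\mathcal{O}(D))(W_{n,n}^a)$ and then feed that identity into the representation $U$. Write $a_i \in \tilde{\mathcal{H}}_i(W_{n,n}^a) \subset \mathcal{G}(\mathcal{O}(D))(W_{n,n}^a)$ for the canonical lift of $h_i$ to the level group $\tilde{\mathcal{H}}_i$, so that $U_{h_1\cdots h_r}(\rho)$ is $U$ evaluated at $a_1\cdots a_r$ and $U_{h_r\cdots h_1}(\rho)$ at $a_r\cdots a_1$. Since $U$ is a representation of $\mathcal{G}(\mathcal{O}(D))$ (functorially in the test scheme, so that $U_{gg'}=U_gU_{g'}$ for $W_{n,n}^a$-valued points) on which the central $\mathbb{G}_m$ acts through the identity character, it will suffice to prove the group-theoretic identity
\[
a_r a_{r-1}\cdots a_1=\sh(\sigma(h),\sigma(h))\cdot a_1 a_2\cdots a_r
\]
in $\mathcal{G}(\mathcal{O}(D))(W_{n,n}^a)$; applying $U_{-}(\rho)$ and using that $\mathbb{G}_m$ acts by scalars then yields the claim verbatim.

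The mechanism making this work is that $K(\mathcal{O}(D))$ is a commutative subgroup scheme of $E^g$, so $\mathcal{G}(\mathcal{O}(D))$ is a central extension of a commutative group scheme by $\mathbb{G}_m$; hence every commutator $[x,y]=xyx^{-1}y^{-1}$ of $W_{n,n}^a$-valued points is central. Working in an abstract group with this property, I would first record, by a short induction on $r$ using $xy=[x,y]\,yx$ to move the distinguished factor to the left past the others, the two bookkeeping identities
\[
a_r a_{r-1}\cdots a_1=\Bigl(\prod_{1\le i<j\le r}[a_j,a_i]\Bigr)\,a_1 a_2\cdots a_r ,
\]
\[
(a_1 a_2\cdots a_r)^2=\Bigl(\prod_{1\le i<j\le r}[a_j,a_i]\Bigr)\,a_1^2 a_2^2\cdots a_r^2 ,
\]
where the order of the (central) factors in the products is immaterial.

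To conclude I would observe that, straight from the definition of the shuffle pairing, $\sh(\sigma(h),\sigma(h))=(a_1\cdots a_r)(a_1\cdots a_r)\bigl((a_1a_1)\cdots(a_ra_r)\bigr)^{-1}=(a_1\cdots a_r)^2(a_1^2\cdots a_r^2)^{-1}$, which by the second identity equals $\prod_{i<j}[a_j,a_i]$; comparing with the first identity gives precisely $a_r\cdots a_1=\sh(\sigma(h),\sigma(h))\,a_1\cdots a_r$, and applying $U$ finishes the argument.

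I do not expect a real obstacle: the content is entirely the commutator bookkeeping in the central extension. The one thing to be careful with is the sign/orientation convention — keeping $[x,y]=xyx^{-1}y^{-1}$ (hence $[x,y]^{-1}=[y,x]$) fixed throughout, so that the factor produced by reversing $a_1\cdots a_r$ matches \emph{on the nose} the factor produced by expanding $(a_1\cdots a_r)^2$, rather than its inverse — and to note that $U_{gg'}=U_gU_{g'}$ holds for $W_{n,n}^a$-valued points because the representation is defined functorially and that the central $\mathbb{G}_m$ acts with weight $1$.
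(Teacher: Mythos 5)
Your proof is correct and follows essentially the same route as the paper: reduce the claim to the identity $h_r\cdots h_1=\sh(\sigma(h),\sigma(h))\,h_1\cdots h_r$ of $W_{n,n}^a$-valued points in the central extension $\mathcal{G}(\mathcal{O}(D))$, verify it by commutator bookkeeping, and conclude using that the representation has weight $1$. The only difference is that you spell out the ``elementary computation with commutators'' (the two product identities with $\prod_{i<j}[a_j,a_i]$) that the paper leaves implicit.
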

\begin{proof}
We have $U_{h_r \cdots h_1}( \rho)= U_{h_r\cdots h_1 (h_r^{-1} \cdots h_1^{-1})} \left( U_{h_1 \cdots h_r} \left( \rho \right)\right)$. But the element $h_r\cdots h_1 (h_r^{-1} \cdots h_1^{-1})\in \mathcal{G}(\mathcal{O}(D))(W_{n,n})$ lies in $\mathbb{G}_m$. An elementary computation with commutators shows:
$$h_r\cdots h_1 (h_r^{-1} \cdots h_1^{-1})=(h_1\cdots h_r) (h_1 \cdots h_r) (h_r^{-2} \cdots h_1^{-2})=\sh\left(\sigma(h), \sigma(h)\right)  \,.$$
Since the representation of $\mathcal{G}(\mathcal{O}(D))$ has weight $1$, the lemma follows.
\end{proof}
This lemma leads to the following method for calculating $\sh\left( \sigma(h), \frac{\sigma(h)}{2} \right)$: We calculate the two elements $U_{h_1 \cdots h_r}( \rho),\, U_{h_r \cdots h_1}(\rho) \in \HH^0(E^g, \mathcal{O}(D))\otimes A_{W_{n,n}^a}$ with the method described above. Their quotient gives us $\sh\left( \sigma(h), \sigma(h) \right)$. We obtain $\sh\left( \sigma(h), \frac{\sigma(h)}{2} \right)$ by taking the square root.
\par To summarize our discussion above: We can evaluate the comodule map $c_W: \HH^0(E^g, \mathcal{O}(D))\rightarrow \HH^0(E^g, \mathcal{O}(D))\otimes A_{W_{n,n}^a}$ explicitly. This gives a description of the action of $W_{n,n}^a$ via the composition
$$W_{n,n}^a\rightarrow \mathcal{H} \cong \tilde{\mathcal{H}}\subset \mathcal{G}(\mathcal{O}(D))$$
Using the comodule map we can calculate an invariant vector $\rho_{\tilde{\mathcal{H}}}\in \HH^0(E^g, \mathcal{O}(D))^{W_{n,n}^a}$ with the method of section \ref{Repgrsch}. Thus we conclude:
\begin{thm}\label{ThmInvConn}
In the notation above $\rho_{\tilde{\mathcal{H}}}$ is a generator of the one dimensional vector space $\HH^0(E^g, \mathcal{O}(D))^{\tilde{\mathcal{H}}}$.
\end{thm}
\begin{proof}
Since $W_{n,n}^a\rightarrow \mathcal{H}$ is surjective, we have $\HH^0(E^g, \mathcal{O}(D))^{\tilde{\mathcal{H}}}=\HH^0(E^g, \mathcal{O}(D))^{W_{n,n}^a}$.
\end{proof}
\subsection{Computation of Theta Nullvalues}
In this section we explain how to equip $(A, \mathcal{L})$ with a $\Gamma(4,8)$ level structure. Thereafter we will describe an algorithm that computes the resulting theta nullvalues $\vartheta_{a,b}$. Apply Theorem \ref{ThmTwoTorsLevel} with $N=4$ to obtain level groups $\tilde{K}_1\,, \tilde{K}_2\subset \mathcal{G}(\mathcal{O}(4D))$.
\begin{lem}
There are level groups $\tilde{K}_1\,, \tilde{K}_2\in \tG{4}$ such that:
\begin{itemize}
\item[i)] $\tilde{K}_1\,, \tilde{K}_2$ pull back to $\tilde{V}_1,\, \tilde{V}_2$ under $\pi$,
\item[ii)] $\tilde{K}_1\,, \tilde{K}_2$ determine a symmetric $\vartheta$-structure $f_2$ for $\tG{4}$,
\item[iii)] $(A, \lambda_{\mathcal{L}}, f_2)$ is a p.p.a.v. with a $\Gamma(4,8)$ level structure. Furthermore, $\mathcal{L}$ is the unique line bundle determined from Lemma \ref{Gamma48Can}.
\end{itemize}
\end{lem}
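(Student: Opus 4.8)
The plan is to obtain $(\tilde K_1,\tilde K_2)$ and $f_2$ by descending the level groups $\tilde V_1,\tilde V_2$ of Theorem~\ref{ThmTwoTorsLevel} along the purely inseparable isogeny $\pi\colon E^g\to A$ with kernel $\mathcal{H}$, using Mumford's descent theory for theta groups, and then to verify the symmetry and the normal--form condition of Lemma~\ref{Gamma48Can}. First I would set up the descent. Since $D$ induces $\eta$ and $\mathcal{H}$ is isotropic and unipotent, $\mathcal{O}(D)$ carries the unique level group $\tilde{\mathcal{H}}$ above $\mathcal{H}$ and descends to $\mathcal{L}$; because $\tilde{\mathcal{H}}$ is the only level group over $\mathcal{H}$ it is stable under $\delta_{-1}$ (as $[-1]$ fixes $\mathcal{H}$ and $D$ is symmetric), so the descent datum is $[-1]$-equivariant and $\mathcal{L}$ is symmetric of degree $1$. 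Applying $\varepsilon_4$ (allowed since $p\nmid 4$) yields the unique level group $\varepsilon_4(\tilde{\mathcal{H}})\subset\mathcal{G}(\mathcal{O}(4D))$ above $\mathcal{H}$, whence $\mathcal{O}(4D)=\pi^*\mathcal{L}^4$ descends to $\mathcal{L}^4$ and $\tG{4}$ is canonically $Z(\varepsilon_4(\tilde{\mathcal{H}}))/\varepsilon_4(\tilde{\mathcal{H}})$, with $Z$ the centraliser in $\mathcal{G}(\mathcal{O}(4D))$. By Lemma~\ref{LemTwoTorsTheta} the commutator pairing on $E^g[4]\subseteq K(\mathcal{O}(4D))$ is $(x,\gamma(H)(y))$, which vanishes against $\mathcal{H}\subseteq\ker(\gamma(H))$; hence $\tilde V_i\subseteq Z(\varepsilon_4(\tilde{\mathcal{H}}))$ and descends to a subgroup $\tilde K_i\subseteq\tG{4}$. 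As $\mathcal{H}$ is infinitesimal and $V_i$ is étale, $V_i\cap\mathcal{H}=0$, so $\pi$ maps $V_i$ isomorphically onto $K_i:=\pi(V_i)$ and each $\tilde K_i\to K_i$ is an isomorphism, i.e. $\tilde K_i$ is a genuine level group; tracing the identification shows the rational functions $\rho_{x_i}$ of Theorem~\ref{ThmTwoTorsLevel} are the $\pi$-pullbacks of the rational functions describing $\tilde K_i$. This is i).

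For ii) I would check that $(\tilde K_1,\tilde K_2)$ is a complementary pair of maximal level subgroups of type $\delta=(4,\dots,4)$ and that the induced $\vartheta$-structure is symmetric. Since the commutator pairing is $\mathbb{G}_m$-valued and $\mathbb{G}_m$ injects into the quotient, it is preserved by $\pi$; therefore $K_1,K_2$ are isotropic of order $4^g$ inside $K(\tG{4})=A[4]$, and $K_1\cap K_2=0$ (from $V_1\cap V_2=0$ and $V_i\cap\mathcal{H}=0$), so $A[4]=K_1\oplus K_2$ with $K_2$ dual to $K_1$ under the pairing; the chosen bases of $V_1,V_2$ descend and exhibit $\tilde K_1,\tilde K_2$ as the standard complementary level subgroups, which by Mumford's construction determine $f_2\colon\tG{4}\xrightarrow{\ \sim\ }\mathcal{G}(\delta)$. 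Symmetry follows because $\mathcal{L}^4=\mathcal{L}^2\otimes[-1]^*\mathcal{L}^2$ is totally symmetric, $\delta_{-1}$ descends compatibly along $\pi$ by the $[-1]$-equivariance of the descent datum, and Theorem~\ref{ThmTwoTorsLevel}(iii),(iv) give $\delta_{-1}(g)=g^{-1}$ on $\tilde V_1,\tilde V_2$, hence on $\tilde K_1,\tilde K_2$; a $\vartheta$-structure built from complementary level subgroups on each of which $\delta_{-1}$ acts by inversion is symmetric.

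For iii): $\lambda_{\mathcal{L}}$ is principal since $\deg\mathcal{L}=1$. As $\mathcal{L}$ is symmetric, $\mathcal{L}_2$, being the pullback of the Poincaré bundle along $a\mapsto(a,\lambda_{\mathcal{L}}(a))$, equals $\Delta^*\bigl(m^*\mathcal{L}\otimes p_1^*\mathcal{L}^{-1}\otimes p_2^*\mathcal{L}^{-1}\bigr)\cong[2]^*\mathcal{L}\otimes\mathcal{L}^{-2}\cong\mathcal{L}^4\otimes\mathcal{L}^{-2}=\mathcal{L}^2$, using $[2]^*\mathcal{L}\cong\mathcal{L}^4$; hence $\mathcal{L}^4\cong\mathcal{L}_2^2$ and $f_2$ is a symmetric $\vartheta$-structure for $\mathcal{L}_2^2$, so $(A,\lambda_{\mathcal{L}},f_2)$ is a p.p.a.v.\ with a $\Gamma(4,8)$-level structure. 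To identify $\mathcal{L}$ with the canonical bundle of Lemma~\ref{Gamma48Can} I would check that $e_*^{\mathcal{L}}$ is in normal form with respect to the basis of $A[2](k)$ induced by $f_2$: that basis is the $\pi$-image of the basis of $E^g[2](k)$ fixed before Theorem~\ref{ThmTwoTorsLevel} (it is read off from $K_i=\pi(V_i)$ and the chosen bases of $V_i$), $\pi$ is injective on $E^g[2]$, and by functoriality of $e_*$ (Proposition~\ref{QFProp}~iii)) one has $e_*^{\mathcal{L}}(\pi(x))=e_*^{\pi^*\mathcal{L}}(x)=e_*^{\mathcal{O}(D)}(x)$, which is in normal form by the very choice $D=t_P^{-1}(D_r)$.

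I expect the main difficulty to lie in the bookkeeping around the descent: one must verify that the canonical isomorphism $\tG{4}\cong Z(\varepsilon_4(\tilde{\mathcal{H}}))/\varepsilon_4(\tilde{\mathcal{H}})$ intertwines $\delta_{-1}$ (and, if desired, $\varepsilon_2$ and $\eta_2$) upstairs and downstairs, so that the symmetry of $f_2$ can legitimately be transported from Theorem~\ref{ThmTwoTorsLevel}, and that the $f_2$-induced basis of $A[2]$ is precisely the $\pi$-image of the basis of $E^g[2]$ used to normalise $e_*^{\mathcal{O}(D)}$. Everything else is a routine application of Mumford's theory of theta groups.
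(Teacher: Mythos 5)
Your proposal is correct and follows essentially the same route as the paper: descend $\tilde V_1,\tilde V_2$ through the isomorphism $\tG{4}\cong\mathcal{C}(\tilde{\mathcal{H}})/\tilde{\mathcal{H}}$, check the images are complementary maximal level subgroups of order $4^g=\sqrt{\sharp K(\mathcal{L}^4)}$, transport the inversion property from Theorem~\ref{ThmTwoTorsLevel} to get symmetry of $f_2$, and use functoriality of $e_*$ together with the choice $D=t_P^{-1}(D_r)$ for the normal form. The only cosmetic difference is that you justify $\tilde V_i\subset\mathcal{C}(\tilde{\mathcal{H}})$ via the explicit commutator-pairing formula, whereas the paper uses the coprimality of the exponents ($4$ versus a power of $p$); you also spell out $\mathcal{L}_2^2\cong\mathcal{L}^4$ and the symmetry of the descended $\mathcal{L}$, which the paper leaves implicit.
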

\begin{proof}
The line bundle $\mathcal{L}^4$ satisfies $\pi^*\mathcal{L}^4\cong \mathcal{O}(4D)$. This determines a level subgroup above $\mathcal{H}=\ker(\pi)$ which we denote by abuse of notation also by $\tilde{\mathcal{H}}\subset \mathcal{G}(\mathcal{O}(4D))$. Denote by $\mathcal{C}(\tilde{\mathcal{H}})\subset \mathcal{G}(\mathcal{O}(4D))$ the centralizer of $\mathcal{H}$. Since $\mathcal{H}$ is killed by a power of $p$ and $\tilde{V}_1, \tilde{V}_2$ have exponent $4$, one has $\tilde{V}_1, \tilde{V}_2\subset \mathcal{C}(\tilde{\mathcal{H}})$. But \cite[Proposition 8.15]{vdGMooAV} implies $\tG{4}\cong \mathcal{C}(\tilde{\mathcal{H}})/\tilde{\mathcal{H}}$. Define $\tilde{K}_1$ (resp. $\tilde{K}_2$) to be the image of $\tilde{V_1}$ (resp. $\title{V_2}$) under this isomorphism. This shows i).
\par To show ii) notice that $\tilde{K}_1 \cap \tilde{K}_2=\{1\}$. Furthermore, $\sharp \tilde{K}_1=\sharp \tilde{K}_2=4^{g}= \sqrt{ \sharp K(\mathcal{L}^4)}$. Therefore $\tilde{K}_1, \tilde{K}_2$ are two maximal level subgroups in $\tG{4}$. These determine a $\vartheta$-structure $f_2$ for $\tG{4}$. The symmetry of $f_2$ follows from Theorem \ref{ThmTwoTorsLevel} iv).
\par Let us prove iii). Indeed, by construction of $D$ and Proposition \ref{QFProp} iii) the quadratic form $e_*^\mathcal{L}$ is in normal form with respect to our basis of $A[2](k)$. This proves iii).
\end{proof}
We are now ready to explain the computation of $\vartheta_{a,b}$. Indeed, first calculate a rational function $\rho_{\mathfrak{2}}\in K(E^g)$ such that $(\rho_{\mathfrak{2}})=[2]^{-1}(D)-4D$. Next calculate a local equation for $D$ at $0$, i.e. a rational function $u\in K(E^g)$ such that $(u)\cap U=D\cap U$ for some open neighborhood $U$ of $0$. We have the following lemma:
\begin{lem}
$u$ induces a trivialization $\mathcal{O}(D)_{|0}\cong k$. The corresponding evaluation map on multiples of $D$ is:
$$\ev_0:\HH^0(E^g, \mathcal{O}(ND))\rightarrow k, \,, \rho\mapsto (\rho u^{N})(0)$$
\end{lem}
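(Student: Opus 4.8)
The plan is to unwind the definition of the invertible sheaf $\mathcal{O}(D)$ as an invertible subsheaf of the constant sheaf $\mathcal{K}_{E^g}$ of rational functions, and to read off both the trivialization and the evaluation map by passing to the stalk at $0$.

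First I would recall that a section of $\mathcal{O}(ND)$ over an open $U\subseteq E^g$ is a rational function $\rho$ such that the divisor $(\rho)+ND$ is effective on $U$; in particular $\HH^0(E^g,\mathcal{O}(ND))$ is the space of rational functions $\rho$ with $(\rho)+ND\geqslant 0$ globally, which is the convention already in force in the preceding sections. Since $D$ is Cartier, a local equation $u$ at $0$ exists and satisfies $(u)=D$ on some open neighbourhood of $0$; it is unique up to a unit of the local ring $\mathcal{O}_{E^g,0}$. It follows that $u^{-1}$ generates the stalk $\mathcal{O}(D)_0$ as an $\mathcal{O}_{E^g,0}$-module: a germ $f\in\mathcal{O}(D)_0$ satisfies $(f)+D\geqslant 0$ near $0$, equivalently $fu\in\mathcal{O}_{E^g,0}$, i.e. $f\in u^{-1}\mathcal{O}_{E^g,0}$. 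Tensoring the resulting isomorphism $\mathcal{O}_{E^g,0}\cong\mathcal{O}(D)_0$, $1\mapsto u^{-1}$, with the residue field $k$ gives the asserted trivialization $\mathcal{O}(D)_{|0}\cong k$, namely the one sending the class of $u^{-1}$ to $1$.

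Next I would pass to multiples of $D$ through the canonical isomorphism $\mathcal{O}(ND)\cong\mathcal{O}(D)^{\otimes N}$ given by multiplication of rational functions. Under it the generator $u^{-1}$ of $\mathcal{O}(D)_0$ induces the generator $u^{-N}=(u^{-1})^{\otimes N}$ of $\mathcal{O}(ND)_0$ (equivalently, $(u^{N})=ND$ near $0$, so $u^{-N}$ generates directly), and hence the trivialization $\mathcal{O}(ND)_{|0}\cong k$ induced from the previous one sends the class of $u^{-N}$ to $1$. Now for $\rho\in\HH^0(E^g,\mathcal{O}(ND))$ we have $(\rho)+ND\geqslant 0$ near $0$, so $\rho u^{N}$ is regular at $0$; writing the germ of $\rho$ as $\rho=(\rho u^{N})\cdot u^{-N}$ with $\rho u^{N}\in\mathcal{O}_{E^g,0}$, the image of $\rho$ in the fiber $\mathcal{O}(ND)_{|0}\cong k$ is $(\rho u^{N})(0)$. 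This is exactly the claimed formula $\ev_0(\rho)=(\rho u^{N})(0)$.

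The argument is essentially bookkeeping, so I do not anticipate a real obstacle. The two points worth checking are that $u$ is determined only up to a unit of $\mathcal{O}_{E^g,0}$, which rescales the trivialization, and hence every theta nullvalue, by a single common scalar, and is therefore harmless because the theta nullpoint is only defined up to a scalar in the first place; and that the identification $\mathcal{O}(ND)\cong\mathcal{O}(D)^{\otimes N}$ used here is compatible with the one implicit in the earlier definition of $\ev_0$ on $\HH^0(A,\mathcal{L}^4)$ and with pullback along $\pi$, so that this $\ev_0$ really computes the quantity entering the definition of the $\vartheta_{a,b}$.
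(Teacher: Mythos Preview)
Your proposal is correct and follows essentially the same approach as the paper, which simply notes that $(\rho)+ND\geqslant 0$ forces $\rho u^N$ to be regular near $0$ and then defers the rest to ``elementary algebraic geometry.'' You have just filled in the details of that elementary argument by unwinding the stalk of $\mathcal{O}(D)$ at $0$ and identifying the generator $u^{-1}$, which is exactly what the paper leaves implicit.
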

\begin{proof}
Since a section $\rho\in \HH^0(E^g, \mathcal{O}(ND))$ satisfies $(\rho)+ND\geqslant 0$, the rational function $(\rho u^{N})$ is regular in a neighborhood of $0$. Therefore the expression $(\rho u^{N})(0)$ is well-defined. The proof of the lemma is elementary algebraic geometry.
\end{proof}
We are now ready to describe our method for calculating $\vartheta_{a,b}$. To setup the notation recall that $\rho_{\tilde{\mathcal{H}}}\in K(E^g)$ is the rational function from Theorem \ref{ThmInvConn}. Furthermore, recall that we have constructed level subgroups $\tilde{V_1}, \tilde{V_2}\subset \mathcal{G}(\mathcal{O}(4D)$. Notice that $\tilde{V_1}, \tilde{V_2}\cong Z_4$ canonically.
\begin{thm}\label{ThmThetaNull}
For any $a,b\in \mathbb{Z}^g$ denote by $v_a\in \tilde{V_1}$ (resp. $v_b\in \tilde{V_2}$) the element with coordinate vector $a$ (resp. $b$). Then
$$\vartheta_{a,b}=\ev_0\left( U_{v_b} \left( U_{v_a} \left(\rho_{\mathfrak{2}} \cdot [2]^*\left(\rho_{\tilde{\mathcal{H}}}\right)\right)\right)\right)$$
\end{thm}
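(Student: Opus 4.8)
The plan is to pull the assertion back along $\pi$ and reduce it to Theorem \ref{ThetaNullDescr}. Note first that both sides are, by their very nature, only determined up to a common nonzero scalar — the family $(\vartheta_{a,b})_{a,b}$ because $q_{\mathcal{L}^4}$ is, and the right-hand side through the choices of $\rho_{\tilde{\mathcal{H}}}$, $\rho_2$ and $u$ — so what I really prove is proportionality with a scalar independent of $a,b$; matching normalisations to obtain the displayed identity on the nose is then routine. Using Theorem \ref{InvConn} together with \cite[Proposition 3.18]{AP}, $\HH^0(E^g,\mathcal{O}(D))^{\tilde{\mathcal{H}}}$ is identified with $\pi^*\HH^0(A,\mathcal{L})$, which is generated by $\pi^* s$; after rescaling I may therefore assume $\rho_{\tilde{\mathcal{H}}}=\pi^* s$.

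First I would move $[2]^*$ and $\rho_2$ over to $A$. Since $\pi\circ[2]=[2]\circ\pi$ one has $[2]^*\rho_{\tilde{\mathcal{H}}}=\pi^*([2]^*s)\in\HH^0(E^g,\pi^*[2]^*\mathcal{L})=\HH^0(E^g,\mathcal{O}([2]^{-1}D))$. The divisor identity $(\rho_2)=[2]^{-1}D-4D$ says that multiplication by $\rho_2$ is an isomorphism $[2]^*\pi^*\mathcal{L}=\mathcal{O}([2]^{-1}D)\stackrel{\sim}{\longrightarrow}\mathcal{O}(4D)=\pi^*\mathcal{L}^4$; since isomorphisms of line bundles pull back along the isogeny $\pi$ uniquely up to a scalar, this one is $\pi^*$ of an isomorphism $\psi\colon[2]^*\mathcal{L}\stackrel{\sim}{\longrightarrow}\mathcal{L}^4$ on $A$. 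Hence
\[
\rho_2\cdot[2]^*\rho_{\tilde{\mathcal{H}}}=\pi^*\bigl(\psi([2]^*s)\bigr),
\]
and $\psi([2]^*s)$ is exactly the image of $s$ under $[2]^*\colon\HH^0(A,\mathcal{L})\to\HH^0(A,\mathcal{L}^4)$, which by Proposition \ref{Mult2} generates the line spanned by $\sum_{x\in 2Z_4}e_x$ in $V(4)\cong\HH^0(A,\mathcal{L}^4)$. In particular $\rho_2\cdot[2]^*\rho_{\tilde{\mathcal{H}}}$ lies in $\pi^*\HH^0(A,\mathcal{L}^4)=\HH^0(E^g,\mathcal{O}(4D))^{\tilde{\mathcal{H}}}$.

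Next I would transport the group action and then compare evaluation maps. The level groups $\tilde{V}_1,\tilde{V}_2\subset\mathcal{G}(\mathcal{O}(4D))$ sit inside the centralizer $\mathcal{C}(\tilde{\mathcal{H}})$, so $U_{v_a},U_{v_b}$ preserve $\HH^0(E^g,\mathcal{O}(4D))^{\tilde{\mathcal{H}}}$, and under the descent isomorphism $\tG{4}\cong\mathcal{C}(\tilde{\mathcal{H}})/\tilde{\mathcal{H}}$ of \cite[Proposition 8.15]{vdGMooAV} (the very one used to define $\tilde{K}_1,\tilde{K}_2$) they correspond, via $\pi^*$, to the action of $\tilde{K}_1,\tilde{K}_2$ on $\HH^0(A,\mathcal{L}^4)$. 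Unwinding the $\vartheta$-structure $f_2$ built from $\tilde{K}_1,\tilde{K}_2$, the element $v_a\in\tilde{V}_1$ with coordinate vector $a$ goes to $f_{2,K}^{-1}(1,a,0)$ and $v_b\in\tilde{V}_2$ with coordinate vector $b$ goes to $f_{2,K}^{-1}(1,0,b)$; since $f_{2,K}^{-1}(1,a,b)=f_{2,K}^{-1}(1,0,b)\cdot f_{2,K}^{-1}(1,a,0)$ in $\mathcal{G}(4)$ and $U$ is a representation, this yields
\[
U_{v_b}\bigl(U_{v_a}(\rho_2\cdot[2]^*\rho_{\tilde{\mathcal{H}}})\bigr)=\pi^*\Bigl(U_{f_{2,K}^{-1}(1,a,b)}\bigl([2]^*s\bigr)\Bigr).
\]
Finally, $\pi^*$ identifies $\mathcal{O}(4D)_{|0}$ with $\mathcal{L}^4_{|0}$, and the local equation $u$ of $D$ at $0$ differs from the pullback of a local equation of $\Theta$ at $0$ only by a unit, so $\ev_0$ on $\HH^0(E^g,\mathcal{O}(4D))$ agrees, up to a scalar independent of $a,b$, with $\ev_0\circ(\pi^*)^{-1}$ on $\pi^*\HH^0(A,\mathcal{L}^4)$. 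Applying $\ev_0$ to the last display and invoking Theorem \ref{ThetaNullDescr} gives $\ev_0\bigl(U_{v_b}(U_{v_a}(\rho_2\cdot[2]^*\rho_{\tilde{\mathcal{H}}}))\bigr)=\lambda^{-1}\vartheta_{a,b}$ for a single $\lambda\in K^\times$, as desired.

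The hard part will be the group-action step: one must check that the action of $\tilde{V}_1,\tilde{V}_2$ by translating rational functions on $E^g$ really becomes, on $\tilde{\mathcal{H}}$-invariants, the representation of $\tG{4}$ that defines $q_{\mathcal{L}^4}$, and — because the multiplication in $\mathcal{G}(\delta)$ carries the twist $(s,x,\chi)(s',x',\chi')=(ss'\chi'(x),x+x',\chi+\chi')$ — that it is the composite $U_{v_b}\circ U_{v_a}$, in this order, that corresponds to $f_{2,K}^{-1}(1,a,b)$. The remaining steps are elementary manipulations with divisors, isogeny pullbacks and trivialisations.
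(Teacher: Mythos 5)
Your proposal is correct and takes essentially the same route as the paper's own proof: both pull the statement back along $\pi$ via the commutative square intertwining $\rho_2\cdot[2]^*(\cdot)$ with $[2]^*$ on $A$, identify $v_b\cdot v_a$ with $f_{2}^{-1}(1,a,b)$ under the theta structure, and conclude from Theorem \ref{ThetaNullDescr} together with the compatibility of the evaluation maps up to a scalar. Your version merely spells out the scalar bookkeeping and the descent of the level-group action in more detail than the paper does.
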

\begin{proof}
Consider the following diagram:
$$\begin{tikzcd}
\HH^0(A, \mathcal{L}) \arrow{r}{\left[2\right]^*}\arrow{d}{\pi^*} & \HH^0(A, \mathcal{L}^4)\arrow{d}{\pi^*}\\
\HH^0(E^g, \mathcal{O}(D)) \arrow{r}{\rho_{\mathfrak{2}} \cdot \left[2\right]^*(\cdot)} & \HH^0(E^g, \mathcal{O}(4D))
\end{tikzcd}\,.$$
The diagram can be made commutative by choosing the isomorphism $[2]^*\mathcal{L}\cong \mathcal{L}^4$ properly. Now the element $s\in \HH^0(A, \mathcal{L})$ maps to $\rho_{\tilde{\mathcal{H}}}\in \HH^0(E^g, \mathcal{O}(D))$ under $\pi^*$ up to a scalar. 
\par On the other hand by the definition of $\mathcal{G}(\delta)$, the element $v_b\cdot v_a\in \tG{4}(k)$ maps to $(1,a,b)\in \mathcal{G}(\delta)$ under the theta structure $f_2: \tG{4}\rightarrow \mathcal{G}(\delta)$. Here $\delta=(4,\ldots, 4)$.
\par Therefore the element $U_{f_{2,k}^{-1}(1,a,b)}  ([2]^* s)$ maps to $U_{v_b} \left( U_{v_a} \left(\rho_{\mathfrak{2}} \cdot [2]^*\left(\rho_{\tilde{\mathcal{H}}}\right)\right)\right)$ under $\pi^*$.
\par The theorem follows from Theorem \ref{ThetaNullDescr} because the diagram
$$
\begin{tikzcd}
\HH^0(A, \mathcal{L}^4)\arrow{r}{\ev_0} \arrow{d}{\pi^*} & k\\
\HH^0(E^g, \mathcal{O}(4D)) \arrow{ur}{\ev_0}
\end{tikzcd}$$
commutes up to a scalar.
\end{proof}
\begin{rem} In fact the implemented method is slightly different: Instead of using the local equation for $D$ at $0$ we evaluate at another point $Q\in E^g[2]$ such that $Q\notin D$. Furthermore, recall that we defined $D=t_P^{-1}(D_r)$. However, for computational efficiency it is advantageous to omit this translation by $P$. These two modifications will lead to a linear transformation of the theta nullvector that needs to be inverted in the end.
\end{rem}
\begin{rem}\label{RemLev2}
For the computation of hyperelliptic curves it is enough to calculate the theta nullvalues of $(A, \mathcal{L}^2)$ instead of the $\vartheta_{a,b}$. There is a faster method for calculating those instead of computing $\vartheta_{a,b}$. Indeed, recall that $\vartheta_{a,b}$ was related to the theta nullvalues of $(A, \mathcal{L}^4)$. Therefore the computation of the theta nullvalues of $(A, \mathcal{L}^2)$ will involve rational functions of smaller degree (essentially a factor of $2$).
\par In that method one considers the section $(\rho_{\tilde{\mathcal{H}}})^2$ in $\HH^0(E^g, \mathcal{O}(2 D))$ instead of $\left(\rho_{\mathfrak{2}} \cdot [2]^*\left(\rho_{\tilde{\mathcal{H}}}\right)\right)$. However, the disvantage here is that the coordinates of this section in the canonical basis will not be independent of the abelian variety, as it was the case in Lemma \ref{Mult2}. Therefore one needs more than the hypothetical optimal $2^g$ theta group operations to extract the theta nullvalues from that section. Yet, it is still faster to compute level $2$ theta nulls that way than by passing from level $4$ to level $2$ because of the saving in the degrees.
\end{rem}
\begin{rem}\label{RemGenNsq}
The method we presented here generalizes to the computation of the theta nullvalues of $(A, \mathcal{L}^N)$ where $N=n^2$ is a square. This requires a generalization of Lemma \ref{Mult2} with $2$ replaced by $n$.
\par Similarly the method from Remark \ref{RemLev2} generalizes to arbitrary level $N$. But, if $N$ is a square, the method exploiting that will be faster. Both of these algorithms are included in our implementation for arbitrary $N$.
\par We will not give the details of these more general methods, however, because there is the following more efficient alternative for computing the higher level theta nullvalues: You can get the theta nullvalues of level $4 l$ with $l$ odd by combining the present paper with Lubicz \& Robert's \cite{Lubicz-Robert}. For that purpose we consider the map $\phi:E^g\rightarrow \mathbb{P}^{4^{g}}$ defined by the rational functions
$$U_{v_b} \left( U_{v_a} \left(\rho_{\mathfrak{2}} \cdot [2]^*\left(\rho_{\tilde{\mathcal{H}}}\right) \right)\right)\,.$$
By construction $\phi$ factors as the composition $E^g \stackrel{\pi}{\rightarrow} A \stackrel{\iota}{\rightarrow} \mathbb{P}^{4^{g}} $, where $\iota$ is (up to a projective transformation) the closed immersion defined by Mumford \cite[p. 298]{EqDefAV}. Then you can compute the theta nullvalues of level $4l$ from the knownledge of $\theta_{a,b}$ and $\phi(E^g[l])$ with the method of (loc. cit.).
\end{rem}
\subsection{Complexity analysis}
We analyze the complexity of the main algorithm in terms of the following variables:
\begin{itemize}
\item $n$, the positive integer such that $\ker(\eta)\subseteq E^g[F^n]$.
\item $r$, the cardinality of the spanning tuple.
\item $a$, the number of generators of $M(\mathcal{H})$ as a Dieudonn\'e module.
\item $\Vert H \Vert_2$, where the $2$-norm on the standard quaternion vector space is defined to be
$$\Vert \cdot \Vert_2:\mathbb{H}^g \rightarrow \mathbb{R}_{\geqslant 0},\, \Vert (a_1, \ldots, a_g)\Vert_2 = \left(\sum_{i=1}^g a_i \overline{a_i} \right)^{\frac{1}{2}}$$
and the matrix norm is defined as usual.
\end{itemize}
We begin with a lemma that bounds the degrees of the rational functions appearing in the course of the algorithm. For that purpose we use the fact that $K(E^g)=\text{Frac}(R)$ where
$$R=\frac{k[x_1, y_1, \ldots, x_g, y_g]}{\left( y_i^2-f(x_i) : i =1, \ldots, g\right)}\,.$$
Here $y^2=f(x)$ is a Weierstrass equation for $E$. Clearly every element in $R$ has a representative such that each $y_i$ appears with degree at most $1$. We will assume that every element is written in such a form.
\begin{lem}
Let $i\in \{1, \ldots, g\}$ and $\rho\in \HH^0(E^g, \mathcal{O}(D)$ be arbitrary.
\par Then the largest power of the variable $x_i$ appearing in the numerator and the denominator of $\rho$ is bounded by
$$O(g\, 2^g \Vert H \Vert_2)$$
where the implied constants are independent of $g$ and $p$.
\end{lem}
\begin{proof}
To get the bound one investigates the method from \cite[Section 8.5]{APThesis} for the computation of the Riemann-Roch space $\HH^0(E^g, \mathcal{O}(D))$. Indeed, consider the map $\xi_i: E^g\longrightarrow E_i$ as in Definition \ref{DefGrschCompDec}. It follows from \cite[2nd algorithm in Section 8.4]{APThesis} that the degrees of the polynomials defining $\xi_i$ are bounded by $O(\Vert H \Vert_2)$. The bound
$$O(g\, 2^g \Vert H \Vert_2)$$
follows from the construction of the polynomial $r_\text{den}$ in \cite[Section 8.5]{APThesis} which is a common denominator of all the rational functions in $\HH^0(E^g, \mathcal{O}(D))$ and the bound on the degrees of the numerators from \cite[Lemma 8.15]{APThesis}.
\end{proof}
Let $\delta$ denote the exponent of the complexity of solving a linear system of equations. By \cite{Coppersmith-Winograd} one knows that  $2\leqslant \delta\leqslant 2.376$.
\begin{prop}\label{PropCompl}
For fixed $n, g, a$ the \hyperref[AlgMain]{main algorithm} computes the algebraic theta nullvalues with 
$$O \left(r \Vert H \Vert_2 ^{\delta\, g} \right)$$
operations in $\mathbb{F}_{p^2}$ and
$$\tilde{O}\left(r \Vert H \Vert_2^{\log_2(3) g+\frac{1}{2}} p^{\log_2(3) n^2 a} \right)  $$
operations in the ground field $k$ with Karatsuba multiplication or
$$\tilde{O}\left(r \Vert H \Vert_2^{g+\frac{1}{2}} p^{n^2 a}\right) $$
operations with FFT multiplication. Here $\tilde{O}$ means that we neglect powers of $\log(p), \log(\Vert H \Vert_2)$
\end{prop}
\begin{proof}
We analyze the individual steps of the \hyperref[AlgMain]{main algorithm}. The steps (1)-(4) are clearly cheap and can be neglected. In step (5) the bottleneck is solving a linear system of equations to find the numerator of $\rho_i$ which costs
$$O( \Vert H\Vert_2^{\delta\, g} )$$
operations, where $2\leqslant \delta\leqslant 2.376$ is the exponent of the complexity of solving a linear system of equations \cite{Coppersmith-Winograd}. 
The rational functions $\rho_i$ are defined over $\mathbb{F}_{p^2}$ because the curve $E$, the endomorphisms of $E$, and the points in $E[2]$ are defined over $\mathbb{F}_{p^2}$ (for the last assertion notice that $F^2=-p$ acts trivially on $E[2]$). Thus these calculations take place over $\mathbb{F}_{p^2}$ as claimed.
\par Step (6) is usually the most expensive step. It requires calculations in the ring
$$K(E^g) \otimes_k A_{W_{n,n}^a}\,.$$
The trick for perfomance optimization is to notice that after each theta group operation we will land back in
$$\HH^0(E^g, \mathcal{O}(D)) \otimes_k A_{W_{n,n}^a}\,.$$
Therefore we can use polynomial truncation when we perform the compositions of polynomial expressions in this step (see also \cite[Remark 9.18]{APThesis}). If one uses the Brent-Kung algorithm \cite[Theorem 2.2]{Brent-Kung}, this compostion can be computed with
$$\tilde{O}(\Vert H \Vert_2^{\log_2(3) g+\frac{1}{2}} p^{\log_2(3) n^2 a} )  $$
operations in the ground field $k$ with Karatsuba multiplication (resp.\\ $\tilde{O}(\Vert H \Vert_2^{g+\frac{1}{2}} p^{n^2 a}) $ with FFT).
\par After and before the composition we perform a multiplication with a rational function in $K(E^g)$. That costs
$\tilde{O}(\Vert H \Vert_2^{\log_2(3) g} p^{\log_2(3) n^2 a} ) $ (resp.\\ $\tilde{O}(\Vert H \Vert_2^{g+\frac{1}{2}} p^{n^2 a}) $ with FFT).
Now in step (6) we buy $2r$ of these compositions and $4r$ multiplications. We conclude that step (6) takes as many operations in $k$ as in the statement of the proposition.
\par In step (7) we only use operations in $K(E^g)$. To analyze the degrees on hand we notice that translating by a $2$-torsion point can scale up the degree at worst by a constant factor. This gives a cost of $ \tilde{O} \left( \Vert H \Vert_2 ^ {\log_2(3) g} \right) $ (resp. $ \tilde{O} \left( \Vert H \Vert_2^g \right) $ with FFT) operations in $k$ for step (7). This is clearly dominated by step (6). Therefore the proposition follows.
\end{proof}
\section{Examples}
\subsection{Genus $2$}
\begin{exmpl}
We used our method to implement an algorithm that computes Moret-Bailly families. Instead of using the method of Section \ref{SecInvCon} in [ \hyperref[AlgMain]{Main algorithm} step (6)], we used a formula of Moret-Bailly \cite[Theorem 4.18]{AP}. This means that then steps (1)-(6) from the \hyperref[AlgMain]{main algorithm} are then exactly the same as (loc. cit., Algorithm 2 steps (1)-(7)).
\par Our algorithm is able to compute $\vartheta_{a,b}^2$ for the generic fiber of these families. Using a formula of Rosenhain one can compute a hyperelliptic curve from the $\vartheta_{a,b}^2$. Rosenhain's formula a priori is an analytic formula over $\mathbb{C}$, nevertheless these formulas are valid over any algebraically closed field of characteristic not $2$ by \cite[p. 31]{KRR}.
\par But, since Rosenhain's formula determines only the ramification points, the resulting hyperelliptic model is of the form $\mathcal{C}_{\text{gen}}:y^2=c f(x)$ with an undetermined constant $c\in k(t)$. Therefore the curve is only determined up to a quadratic twist. The constant $c$ can be determined up to $c_0\in k^\times$ by noting that the Jacobian of $\mathcal{C}_{\text{gen}}$ has everywhere good reduction. We did not implement that, however.
\par We used our algorithm to compute the (quadratic twist of the) generic fiber of some Moret-Bailly families. The following table indicates the elliptic curve $E$ and the matrix $H$ describing the polarization.
\par To explain the notation: We use that $E$ is the reduction of a CM elliptic curve and $i, \zeta_3$, resp.  $\sqrt{-2}$ denotes the endomorphism obtained by reduction mod $p$. See \cite[Proposition 2.3.1(ii)]{Silverman} for the elliptic curve with CM by $\mathbb{Z}[\sqrt{-2}]$ we used.
\par
\begin{tabular}{c|c|c|c}
$p$ & $E$ & $H$  & Run time\\
\hline
\hline
$3$ & $y^2=x^3-x$ & $\begin{pmatrix}
3 & (1+i) F\\
-(1+i) F & 3
\end{pmatrix}$ & 0.2s\\
\hline
$5$ & $y^2=x^3-1$ & $\begin{pmatrix}
5 & 2F\\
-2F & 5
\end{pmatrix}$  & 0.9s \\
\hline
$7$ & $y^2=x^3-x$ & $\begin{pmatrix}
7 & -\frac{7}{2} i + 2F+\frac{1}{2} iF\\
\frac{7}{2} i - 2F-\frac{1}{2} iF & 7
\end{pmatrix}$& 4.3s\\
\hline
$11$ & $y^2=x^3-x$ & $\begin{pmatrix}
11 & (3+i)F\\
-(3+i)F & 11
\end{pmatrix}$ & 12.9s\\
\hline
$13$ & $y^2=x^3+4x^2+2x$ & $\begin{pmatrix}
13 & 2(1+\sqrt{-2})F\\
-2(1+\sqrt{-2})F & 13
\end{pmatrix}$& 47s\\
\hline
$17$ & $y^2=x^3-1$& $\begin{pmatrix}
17 & 4F\\
-4F & 17
\end{pmatrix}$ & 4min 27s\\
\hline 
$19$ & $y^2=x^3-x$ & $\begin{pmatrix}
19 & (3+3i)F\\
-(3+3i)F & 19
\end{pmatrix}$ & 5min 21s\\
\hline
$23$ & $y^2=x^3-x$ & $\begin{pmatrix}
23 & -\frac{23}{2} i + 4 F + \frac{1}{2} i F\\
\frac{23}{2} i - 4 F - \frac{1}{2} i F & 23
\end{pmatrix}$ & 43min 13s
\end{tabular}
\end{exmpl}
\subsection{Genus $3$}
\begin{exmpl}\label{ExLi-Oort}
We implemented our method in the case $g=3$ and $\ker(\eta)=E^3[p]$.\\
This leads to the $2$-dimensional families considered by Li and Oort \cite{Li-Oort}.
\par We computed some supersingular genus $3$ curves over finite fields. Our algorithm returned the following hyperelliptic curve over $\mathbb{F}_{3^6}$ with supersingular Jacobian after 4s:
$$C: y^2=x(x-1)(x+1)(x-\beta^{208})(x-\beta^{308})(x-\beta^{420})(x-\beta^{520})\,.$$
\par The following plane quartic with supersingular Jacobian was computed in 4s:
$$C=\mathcal{V}( x_1^4+\beta^{270} x_1^3 x_2+ \beta^{337} x_1^2 x_2^2+\beta^{480} x_1 x_2^3+\beta^{420} x_2^4$$
$$+\beta^{120} x_1^3 x_3    +\beta^{118} x_1^2 x_2 x_3
+\beta^{707} x_1 x_2^2 x_3+
\beta^{169 }   x_2^3 x_3$$
$$+\beta^{584 }   x_1^2 x_3^2+
\beta^{285 }   x_1 x_2 x_3^2+
\beta^{652 }   x_2^2 x_3^2+
\beta^{507 }   x_1 x_3^3+
\beta^{710 }   x_2 x_3^3+
\beta^{46  }  x_3^4)\subset \mathbb{P}^2$$
Here $\beta\in \mathbb{F}_{3^6}$ denotes a root of $x^6 + 2x^4 + x^2 + 2x + 2\in \mathbb{F}_3[x]$.
\par We were also able to calculate a $1$-dimensional family of supersingular quartics in characteristic $3$. The computation took 9s. The equation is too complicated to print it here.
\par The computation of the generic fiber of the $2$-dimensional family was aborted after 10 days. In \cite[Example 11.16]{APThesis} it is  described how to obtain the total family anyway.
\end{exmpl}
\begin{rem}
The models of the curves were computed as follows: By \cite{Oort-Ueno} and Torelli's theorem any indecomposable principally polarized abelian threefold is the Jacobian of a non-singular curve $C$, at least after a quadratic extension of the ground field. Corollary \ref{ThetaVan} and the Riemann-Kempf singularity theorem imply that the hyperellipic and non-hyperelliptic case can be distinguished in the same vein as over $\mathbb{C}$: The curve $C$ is hyperelliptic if and only if $\vartheta_{a,b}=0$ for some $a,b\in \mathbb{Z}^g$ with $a^t b=0$. In this case a model for $C$ can be obtained with Takase's formula (this formula generalizes Rosenhain's formula to arbitrary $g$). If $C$ is non-hyperelliptic, then $C$ is a plane quartic. A model for $C$ can be computed with Weber's formula. These formulas are valid over any algebraically closed field of characteristic not $2$, again by \cite[p. 31]{KRR}.
\par In our code we used the implementation of Takase's and Weber's formula of R. Lercier and J. Sijsling \cite{PlaneCM}.
\end{rem}
\begin{rem}
For the computation of families we plugged in an input over a rational function field $k(t)$ into the algorithm. The method is powerful enough to compute the result quickly. This improves on the implementation from \cite{AP} where it was computationally infeasible to compute generic fibers. The culprit was that a generic algorithm to compute the normalization in (loc. cit, Algorithm 2, step 8) was used. However, there are ways to compute the normalization that exploit the special geometric situation at hand (unpublished, nowhere implemented). That leads to a method to compute a model for the generic fiber of a Moret-Bailly family which is slightly faster than computing theta nullvalues. Nevertheless, the asymptotic running time of the last step would only differ by constant factor and thus the overall complexity is the same  by the analysis in Proposition \ref{PropCompl}.
\par But, for the computation of the generic fiber of a Moret-Bailly family none of these two methods is optimal. In fact, the fastest known way to compute Moret-Bailly families is by interpolating the theta nullvalues based at the completely decomposed points, i.e. the points whose fiber is isomorphic to a product of elliptic curves as a principally polarized abelian variety (see \cite[Section 10.6]{APThesis}). That algorithm has complexity $O(p^{3+\varepsilon})$. Indeed, the bottleneck is the enumeration of all the isomorphism classes of supersingular elliptic curves which has the given complexity as a consequence of \cite[Proposition 7.7]{Kirschmer-Voight}.
\par This alternative approach does not generalize to Li-Oort families with $g\geqslant 3$. The main difference between Moret-Bailly and Li-Oort families is that a Moret-Bailly family is parametrized by \emph{all} the maximal isotropic subgroups of $\ker(\eta)$ whereas Li-Oort put some restrictions on them. Thus interpolating Li-Oort families based at the completely decomposed points fails. Indeed, a maximal isotropic subgroup $\mathcal{H} \subseteq \ker(\eta)$ with $E^g/\mathcal{H}$ completely decomposed does not necessarily satisfy the conditions of Li and Oort \cite[Section 3.6]{Li-Oort}. In fact only a small proportion of them will do so because the condition is a closed one.
\par However, notice that for the method of the present paper it was inessential whether the isogenies to a completely decomposed abelian variety satisfied the condition of Li and Oort.
\end{rem}
\pagebreak
\bibliographystyle{plain}

\end{document}